\documentclass{amsart}

\usepackage[colorlinks=true, urlcolor=black, citecolor=black, linkcolor=black, hyperfootnotes=true]{hyperref}
\usepackage{amssymb}
\usepackage{cancel}
\usepackage{aliascnt}
\usepackage{mathscinet}
\usepackage{graphicx}
\usepackage{accents}
\usepackage{enumitem}
\usepackage{mathrsfs}
\usepackage{mathtools}
\usepackage{xcolor}

\usepackage{trimclip}
\def\@clipped@vdash{\raise.6ex\hbox{\clipbox{0pt .6ex 0pt .6ex}{$\vdash$}}}
\newcommand*\vDdash{\mathrel{\ooalign{$\vdash$\cr\raise.3ex\hbox{\@clipped@vdash}\cr\raise -.3ex\hbox{\@clipped@vdash}}}}

\newcommand{\sqin}{\mathrel{\vphantom{\sqsubset}\text{\mathsurround=0pt\ooalign{$\sqsubset$\cr$-$\cr}}}}

\numberwithin{equation}{section}

\newtheorem{thm}{Theorem}[section]

\newaliascnt{prp}{thm}
\newtheorem{prp}[prp]{Proposition}
\aliascntresetthe{prp}

\newaliascnt{cor}{thm}
\newtheorem{cor}[cor]{Corollary}
\aliascntresetthe{cor}

\theoremstyle{definition}

\newaliascnt{dfn}{thm}
\newtheorem{dfn}[dfn]{Definition}
\aliascntresetthe{dfn}

\newaliascnt{xpl}{thm}
\newtheorem{xpl}[xpl]{Example}
\aliascntresetthe{xpl}

\newaliascnt{rmk}{thm}
\newtheorem{rmk}[rmk]{Remark}
\aliascntresetthe{rmk}

\author{Tristan Bice}
\author{Wies{\l}aw Kubi\'s}
\email{bice@math.cas.cz}
\email{kubis@math.cas.cz}
\thanks{Tristan Bice is supported by the GA\v{C}R project 22-07833K and RVO: 67985840.\\
Wies{\l}aw Kubi\'s is supported by the GA\v{C}R project EXPRO 20-31529X and RVO: 67985840.\\
Both authors are researchers at the Institute of Mathematics of the Czech Academy of Sciences.}
\keywords{Stone duality; entailment relations; stably locally compact spaces.}
\subjclass[2010]{06A12, 06D50, 06E15, 54D10, 54D45, 54D70, 54D80}


\title[Vickers Duality for Stably Locally Compact Spaces]{Lattice-Free and Point-Free: Vickers Duality for Subbases of Stably Locally Compact Spaces}

\begin{document}

\begin{abstract}
Inspired by classic work of Wallman and more recent work of Jung-Kegelmann-Moshier and Vickers, we show how to encode general subbases of stably locally compact spaces via certain entailment relations.  We further build this up to a categorical duality encompassing the classic Priestley-Stone duality and its various extensions to stably locally compact spaces by Shirota, De Vries, Hofmann-Lawson (in the stable case), Jung-S\"underhauf, Hansoul-Poussart, Bezhanishvili-Jansana, van Gool and Bice-Starling.
\end{abstract}

\maketitle

\section*{Introduction}

\subsection*{Background}

One of the most famous dualities in mathematics is the classic Stone duality \cite{Stone1936} between Boolean algebras and Stone spaces, i.e. zero-dimensional compact Hausdorff spaces, which has by now found numerous applications in various areas of topology, algebra and analysis.  Equally important are its many and varied extensions developed by a large number of people over the past 80 years.

Stone himself \cite{Stone1938} was the first to extend his duality, namely to distributive lattices and spectral spaces, which Priestley \cite{Priestley1970} later showed how to view as ordered Stone spaces.  Dualities of spectral spaces with more general distributive semilattices have also been examined more recently by Hansoul-Poussart \cite{HansoulPoussart2008} and Bezhanishvili-Jansana \cite{BezhanishviliJansana2011}.  A somewhat different duality with more general compactly based sober spaces was also obtained by Gr\"atzer \cite{Gratzer1978}, which was recently extended to even non-distributive semilattices by Celani-Gonzalez \cite{CelaniGonzalez2020}.

In a somewhat different direction, Shirota \cite{Shirota1952} and de Vries \cite{deVries1962} extended Stone's original duality to (certain bases of) arbitrary compact Hausdorff spaces via compingent lattices, i.e. distributive lattices with an additional transitive relation representing compact containment.  More recently, Jung-S\"underhauf \cite{JungSunderhauf1995} and van Gool \cite{vanGool2012} obtained a similar duality between certain proximity lattices and stably compact spaces\footnote{Compactness of the spaces considered in \cite{JungSunderhauf1995} and \cite{vanGool2012} is implicit from the fact compact saturated sets are closed under finite intersections -- in particular the empty intersection, i.e. the whole space, must be compact.  Confusingly, some older works (e.g. \cite{Johnstone1986}) call these spaces `stably locally compact', while for us this only means that compact saturated subsets are closed under pairwise (equivalently, non-empty finite) intersections -- if the whole space is also compact then we call it `stably compact'.  This is consistent with modern usage as in \cite{vanGool2012} and \cite{Goubault2013}.}, a natural common generalisation of spectral spaces and compact Hausdorff spaces.  Very recently, a further extension to distributive $\vee$-predomains (which Kawai calls `localized strong proximity $\vee$-semilattices' -- see \cite{Kawai2021}) and arbitrary locally compact sober spaces was even obtained in \cite{Bice2021GHLJS}.

These results also have a well-known frame theoretic counterpart, namely the Hofmann-Lawson duality \cite{HofmannLawson1978} between continuous frames and (the entire open set lattices of) locally compact sober spaces.  This fits in nicely with the frame-theoretic approach to general point-free topology, which by now is firmly established (e.g. see \cite{PicadoPultr2012}).  The only problem with frames is their infinitary nature, as the lattices involved are required to be complete and satisfy an infinite version of distributivity.  As a result, non-trivial frames are invariably uncountable and difficult to construct from scratch, as well as being impervious to tools from classic model theory, like those used in Fra\"iss\'e theory.  Indeed, for these and other reasons, logicians have long been interested in more constructive `predicative' analogs of frames that avoid any quantification over infinite subsets.

In contrast to frames, the lattices above representing mere bases of locally compact spaces are much more like the finitary structures one usually encounters in abstract algebra.  But to what extent is the lattice structure really needed here?  Might there be other finitary ways of encoding stably locally compact spaces?  Could we even encode general subbases of such spaces in some finitary, perhaps more relational way?  Could this lead to the unification of various Stone-type dualities and constructions?  Might this yield a natural way of obtaining stable compactifications (see \cite{Lawson1991} and \cite{BezhanishviliHarding2014}) or even more general stabilisations, much as compingent/proximity lattices have been used to obtain Hausdorff compactifications?

To answer these questions, we can take some inspiration from classic work of Wallman \cite{Wallman1938} in the slightly different realm of compact $T_1$ spaces.  Wallman's key idea was to encode these spaces using finite subbasic covers.  Specifically, given any subbasis $S$ of open subsets of some compact $T_1$ space $X$, we can form an abstract simplicial complex consisting of the finite subsets of $S$ that do not cover $X$.  Wallman further showed that any abstract simplicial complex can be represented as a subbasis of such a space, thus yielding a very general kind of Stone-like duality.

For a stably locally compact space $X$, subbasic covers of the entirety of $X$ do not always suffice to completely encode the space.  Instead, we must consider covers, or rather compact covers, of smaller subsets, e.g. formed from finite intersections of the subbasis $S$ in question.  This leads us to consider `cover relations' $\vdash$ on finite subsets $\mathsf{F}S=\{F\subseteq S:|F|<\infty\}$ of the subbasis $S$ defined by
\[F\vdash G\qquad\Leftrightarrow\qquad\bigcap F\Subset\bigcup G\]
(here $\Subset$ is compact containment, i.e. $O\Subset N$ means that every open cover of $N$ has a finite subcover of $O$ or, equivalently, $O\subseteq K\subseteq N$, for some compact $K$).

The questions one then needs to address are the following:
\begin{enumerate}
\item How can we characterise these cover relations abstractly?
\item How can we recover the space $X$ from the abstract cover system $(S,\vdash)$?
\item How can we further axiomatise continuous maps in similar abstract terms?
\end{enumerate}
The present paper is devoted to answering these questions, thus obtaining a categorical duality encompassing the classic Priestley-Stone duality and various extensions to stably locally compact spaces.  As a simple application of our results, we also show that every core compact $T_0$ space $X$ has a minimal stabilisation, a moderate extension of some well known results on stable compactifications.

\subsection*{Related Work}

The cover relations above turn out to be very similar to some entailment relations considered previously in various logical systems, e.g. see \cite{Scott1974}, \cite{JungKegelmannMoshier1999}, \cite{CederquistCoquand2000}, \cite{CoquandZhang2003}, \cite{Vickers2004} and \cite{Kawai2020}.  Indeed, our cover relations are more general than Scott's entailment relations and can also be viewed as encompassing Kawai's strong continuous entailment relations.  On the other hand, our cover relations are slightly more restrictive than Vickers' entailment relations, which are `monotone cut-idempotents' in our terminology and which are essentially connective-free versions of Jung-Kegelmann-Moshier's consequence relations.  Indeed, this resolves a number of issues with general monotone cut-idempotents, without eliminating any isomorphism classes from Vickers' category.  Connections of entailment systems with proximity lattices and stably continuous frames (where the way-below relation respects finite meets) have also been investigated in some detail in this previous work.

One of our goals is to obtain a similar duality with more general arithmetic lattices (frames where the way-below relation respects only pairwise meets), which involves a slightly different interpretation of what entailment systems represent.  But more importantly, what really distinguishes our work is that we are interested in dualities with concrete spaces, not just their lattice or frame theoretic counterparts.  Of course, once a duality with certain lattices or frames is established (e.g. as done by Vickers in \cite{Vickers2004}), one can immediately use one of the dualities already available (e.g. Hofmann-Lawson duality) to obtain another duality with concrete spaces.  The down-side is that this relegates the duality with concrete spaces to a secondary position, ignoring the untapped potential of cover relations to conversely extend and unify previous dualities for stably locally compact spaces.

As a case in point, say we follow Vickers and consider a monotone cut-idempotent $\vdash$ on $\mathsf{F}S$.  The quasi-ideals relative to $\vdash$ then form an arithmetic lattice (see \autoref{MCI} below) whose irreducible elements then form a stably locally compact space.  However, these are certain subsets of $\mathsf{F}S$, while if we want to generalise the dualities already established then our space should rather be constructed from filter-like subsets of $S$.  Another issue is that the principal quasi-ideals do not necessarily generate the frame, at least not in the usual sense via meets and joins.  This means the elements of $S$ do not always get represented as a subbasis of the corresponding stably locally compact space (see \autoref{DyadicXpl} below).

Our slightly more restrictive cover relations resolve these issues.  Indeed, we construct the corresponding stably locally compact space directly from `tight' subsets of $S$ with a topology such that $S$ itself immediately gets represented as a subbasis of the space.  For cover relations arising canonically from proximity lattices or stably continuous frames, these tight subsets are precisely the rounded proximal/prime filters -- previous dualities thus fall out as immediate corollaries.  Moreover, Vickers' original category is essentially preserved, as monotone cut-idempotents are always isomorphic in his category to one of our cover relations (see \autoref{Karoubiisomorphic} below).

\section{Preliminaries}\label{Preliminaries}

Given any sets $S$ and $T$, we view any subset of their product $\vdash\ \subseteq S\times T$ as a relation `from $T$ to $S$'.  Any $Q\subseteq S$ then defines existential and universal `polar' subsets of $T$ which we denote by superscripts and subscripts respectively, i.e.
\begin{align*}
Q^\vdash&=\{t\in T:\exists q\in Q\ (q\vdash t)\}.\\
Q_\vdash&=\{t\in T:\forall q\in Q\ (q\vdash t)\}.
\end{align*}
With respect to inclusion, these are order preserving and reversing respectively, i.e.
\begin{align*}
Q\subseteq R\qquad&\Rightarrow\qquad Q^\vdash\subseteq R^\vdash.\\
Q\subseteq R\qquad&\Rightarrow\qquad R_\vdash\subseteq Q_\vdash.
\end{align*}
Also note these agree for singletons, i.e. $\{q\}^\vdash=\{q\}_\vdash$, in which case we often drop the curly braces and just write $q^\vdash$ or $q_\vdash$.  We also let $\dashv\ \subseteq T\times S$ denote the opposite relation of $\vdash$, i.e. $t\dashv s$ means $s\vdash t$.
We define the composition of $\vdash$ with any other relation $\vDash\ \subseteq R\times S$ existentially in the usual way, i.e. $\vDash\circ\vdash\ \subseteq R\times T$ is given by
\[r\vDash\circ\vdash t\qquad\Leftrightarrow\qquad\exists s\in S\ (r\vDash s\vdash t)\qquad\Leftrightarrow\qquad r^\vDash\cap t^\dashv\neq\emptyset.\]

We denote the finite subsets of $S$ by
\[\mathsf{F}S=\{F\subseteq S:|F|<\infty\}.\]
The canonical universal extensions ${}_\forall\hspace{-3pt}\vdash\ \subseteq\mathsf{F}S\times T$ and $\vdash_\forall\ \subseteq S\times\mathsf{F}T$ are defined by
\begin{align*}
G\mathrel{{}_\forall\hspace{-3pt}\vdash}t\qquad&\Leftrightarrow\qquad\forall g\in G\ (g\vdash t).\\
s\vdash_\forall H\qquad&\Leftrightarrow\qquad\forall h\in H\ (s\vdash h).
\end{align*}
In single instances of these relations, we will usually drop the subscripts, as long as there is no possibility of confusion.  Indeed, it will be convenient to adopt this convention even for infinite subsets, e.g. for any $Q\subseteq S$ and $R\subseteq T$,
\[Q\vdash R\qquad\text{means}\qquad\forall q\in Q\ \forall r\in R\ (q\vdash r).\]
Likewise, $Q\vdash t$ means $q\vdash t$, for all $q\in Q$ (equivalently $Q\subseteq t_\dashv$ or $t\in Q_\vdash$).

Conversely, for any relations $\vdash\ \subseteq S\times\mathsf{F}T$ and $\vDash\ \subseteq\mathsf{F}S\times T$, we denote their singleton restrictions in $S\times T$ by $\vdash_1$ and ${}_1\hspace{-3pt}\vDash$ respectively, i.e.
\begin{align*}
s\vdash_1t\qquad&\Leftrightarrow\qquad s\vdash\{t\}.\\
s\mathrel{{}_1\hspace{-3pt}\vDash}t\qquad&\Leftrightarrow\qquad\{s\}\vDash t.
\end{align*}
Again, however, we will usually drop the subscripts/curly braces in single instances of these relations, e.g. for any $s\in S$ and $t\in T$,
\[s\vdash t\qquad\text{means}\qquad s\vdash\{t\}.\]

Finally, we will also have occasion to consider the canonical existential extension $\vdash_\exists\ \subseteq S\times\mathsf{F}T$ of a relation $\vdash\ \subseteq S\times T$ defined by
\[p\vdash_\exists G\qquad\Leftrightarrow\qquad\exists g\in G\ (p\vdash g).\]
Equivalently, $\vdash_\exists\ =\ \vdash\circ\in_T$, where $\in_T$ is the membership relation on $T\times\mathsf{F}T$.  In particular, any relation $\vdash\ \subseteq S\times\mathsf{F}T$ gives rise to another relation between the same sets $\vdash_{1\exists}\ \subseteq S\times\mathsf{F}T$, i.e. where
\[p\vdash_{1\exists}G\qquad\Leftrightarrow\qquad\exists g\in G\ (p\vdash\{g\}).\]

We call a relation $\vdash\ \subseteq S\times\mathsf{F}T$ \emph{upper} if, for all $s\in S$, $t\in T$ and $G\in\mathsf{F}T$,
\[\tag{Upper}s\vdash G\qquad\Rightarrow\qquad s\vdash G\cup\{t\}.\]
Equivalently, $\vdash$ is upper if $s\vdash G\subseteq H\in\mathsf{F}T$ implies $s\vdash H$, i.e.
\[\tag{Upper$'$}\vdash\circ\subseteq_{\mathsf{F}T}\ \ \subseteq\ \ \vdash,\]
where $\subseteq_{\mathsf{F}T}$ denotes the inclusion relation on $\mathsf{F}T$.  Note that
\[\vdash\text{ is upper}\qquad\Rightarrow\qquad\vdash_{1\exists}\ \subseteq\ \vdash,\]
i.e. $\vdash_{1\exists}$ is a strengthening of $\vdash$, one which will play an important role in our work.  Likewise, we call a relation $\vdash\ \subseteq\mathsf{F}S\times T$ \emph{lower} if, for all $s\in S$, $t\in T$ and $F\in\mathsf{F}S$,
\[\tag{Lower}F\vdash t\qquad\Rightarrow\qquad F\cup\{s\}\vdash t.\]
We call a relation $\vdash\ \subseteq\mathsf{F}S\times\mathsf{F}T$ \emph{monotone} if it is both upper and lower.

We call a relation $\vdash$ on $\mathsf{F}S$ (i.e. $\vdash\ \subseteq\mathsf{F}S\times\mathsf{F}S$) a \emph{cut relation} if Gentzen's cut rule holds, which is just the converse of monotonicity, i.e. for all $F,G\in\mathsf{F}S$ and $s\in S$,
\[\tag{Cut}\label{Cut}\{s\}\cup F\vdash G\quad\text{and}\quad F\vdash G\cup\{s\}\qquad\Rightarrow\qquad F\vdash G.\]
We call $\vdash$ \emph{$1$-reflexive} if ${}_1\hspace{-3pt}\vdash_1$ is reflexive, i.e. for all $s\in S$,
\[\tag{$1$-Reflexive}s\vdash s.\]
Note that if $\vdash$ is monotone and $1$-reflexive then $\vdash$ itself is almost reflexive too, specifically $F\vdash F$, for all non-empty finite $F\subseteq S$.  However, usually reflexivity does not hold for the empty set, i.e. $\emptyset\nvdash\emptyset$.  Indeed, the only monotone relation satisfying $\emptyset\vdash\emptyset$ is $\mathsf{F}S\times\mathsf{F}S$ itself, i.e. where $F\vdash G$, for all finite $F,G\subseteq S$.

Finally, let us denote the singleton subsets of any $T\subseteq S$ by
\[\mathsf{1}T=\{\{t\}:t\in T\}.\]
Note that if $\vdash$ on $\mathsf{F}S$ is both $1$-reflexive and upper then $\in_S\ \subseteq\ \vdash$, i.e. $f\vdash F$, whenever $f\in F\in\mathsf{F}S$.  Put another way, this is saying $\mathsf{1}F\vdash F$ or, equivalently, $F{}_{\forall1}\hspace{-3pt}\vdash F$, for all $F\in\mathsf{F}S$.  Likewise,
\[\in_S\ \subseteq\ \dashv\qquad\Leftrightarrow\qquad\forall F\in\mathsf{F}S\ (F\vdash\mathsf{1}F)\qquad\Leftrightarrow\qquad\forall F\in\mathsf{F}S\ (F\vdash_{1\forall}F)\]
and these equivalent conditions hold whenever $\vdash$ is both $1$-reflexive and lower.

\section{Entailments}

\begin{dfn}
An \emph{entailment} is a monotone cut relation.
\end{dfn}

So a relation $\vdash$ on $\mathsf{F}S$ is an entailment if, for all finite $F,G\subseteq S$ and $s\in S$,
\[\tag{Entailment}\label{Entailment}\{s\}\cup F\vdash G\quad\text{and}\quad F\vdash G\cup\{s\}\qquad\Leftrightarrow\qquad F\vdash G.\]
Note that, unlike some other authors, we do not require entailments to be $1$-reflexive -- we call a $1$-reflexive entailment a \emph{Scott relation}.

\subsection{Examples}\label{Examples}

The canonical example of a Scott relation in the field of logic is where $S$ is a collection of propositions and $F\vdash G$ indicates that at least one of the propositions in $G$ holds whenever all propositions in $F$ hold.  There are also a number of less well known entailments arising in other areas of mathematics, as we now demonstrate.  Indeed, the primary motivating examples for our work come instead from topology and order theory.

\begin{xpl}[Dense Covers]\label{DenseCovers}
When $S$ is a family of open sets of a space $X$, we can take $F\vdash G$ to mean the union of $G$ is dense in the intersection of $F$, i.e.
\[\tag{Dense Cover}\label{Dcl}F\vdash G\qquad\Leftrightarrow\qquad\bigcap F\subseteq\mathrm{cl}(\bigcup G).\]
Indeed, if $\bigcap F\subseteq\mathrm{cl}(\bigcup G)$ then certainly $p\cap\bigcap F\subseteq\mathrm{cl}(\bigcup G)$ and $\bigcap F\subseteq\mathrm{cl}(\bigcup G\cup p)$.  Conversely, say $p\cap\bigcap F\subseteq\mathrm{cl}(\bigcup G)$ and $\bigcap F\subseteq\mathrm{cl}(\bigcup G\cup p)$.  Note that
\[\mathrm{cl}(p)\cap\bigcap F\subseteq\mathrm{cl}(p\cap\bigcap F)\subseteq\mathrm{cl}(\bigcup G)\]
(for any open neighbourhood $O$ of $x\in\mathrm{cl}(p)\cap\bigcap F$, we see that $O\cap\bigcap F$ is again an open neighbourhood of $x$ so $O\cap\bigcap F\cap p\neq\emptyset$, showing that $x\in\mathrm{cl}(p\cap\bigcap F)$).  Thus
\[\bigcap F=\mathrm{cl}(\bigcup G\cup p)\cap\bigcap F\subseteq\mathrm{cl}(\bigcup G)\cup(\mathrm{cl}(p)\cap\bigcap F)\subseteq\mathrm{cl}(\bigcup G),\]
i.e. $F\vdash G$, showing that $\vdash$ is an entailment.  We also immediately see that $\vdash$ is $1$-reflexive and hence a Scott relation.

In particular, we can consider this example when $X$ is discrete, in which case
\[\tag{Containment}\label{Containment}F\vdash G\qquad\Leftrightarrow\qquad\bigcap F\subseteq\bigcup G.\]
In fact, what we really want to do is replace $\subseteq$ here with compact containment $\Subset$ on a core compact space.  This will result in an entailment that is no longer $1$-reflexive but still possesses other important properties \textendash\, see \eqref{SubsetC} below.
\end{xpl}

Much like the example from logic mentioned above, the canonical Scott relation on a distributive lattice $S$ is given by
\[\tag{$\bigwedge$-$\bigvee$-Cover}\label{MeetJoinCover}F\vdash G\qquad\Leftrightarrow\qquad\bigwedge F\leq\bigvee G.\]
This can be extended to semilattices, even those that are not distributive.  Indeed, this is what will allow us to view the semilattice dualities of Hansoul-Poussart \cite{HansoulPoussart2008} and Bezhanishvili-Jansana \cite{BezhanishviliJansana2011} as special cases of our duality.

\begin{xpl}[Semilattices]\label{veeSemilattices}
If $S$ is a $\vee$-semilattice with minimum $0(=\bigvee\emptyset)$ then we have a Scott relation $\vdash$ on $\mathsf{F}S$ defined by
\[\tag{$\bigvee$-Cover}\label{veeDensity}F\vdash G\quad\Leftrightarrow\quad\forall p,q\in S\ \Big(\forall f\in F\ (p\leq f\vee q)\ \Rightarrow\ p\leq\bigvee G\vee q\Big)\]
Indeed, $1$-reflexivity and monotonicity are immediate.  Conversely, say $\{s\}\cup F\vdash G$ and $F\vdash G\cup\{s\}$.  Take $p,q\in P$ and say $p\leq f\vee q$, for all $f\in F$, and hence $p\leq s\vee\bigvee G\vee q$, as $F\vdash G\cup\{s\}$.  Then certainly $p\leq f\vee\bigvee G\vee q$ too, for all $f\in F$, and hence $p\leq\bigvee G\vee\bigvee G\vee q=\bigvee G\vee q$, as $\{s\}\cup F\vdash G$.  This shows that $F\vdash G$ so the cut rule also holds and hence $\vdash$ is a Scott relation.

Note that $\vdash$ reduces to $\leq$ on singletons.  In fact, for all $f\in S$ and $G\in\mathsf{F} S $,
\begin{equation}\label{fDGbigvee}
f\vdash G\qquad\Leftrightarrow\qquad f\leq\bigvee G.
\end{equation}
Indeed, if $f\leq\bigvee G$ then certainly $p\leq f\vee q$ implies $p\leq\bigvee G\vee q$.  Conversely, if $f\vdash G$ then, as $f\leq f\vee\bigvee G$, the definition of $\vdash$ yields $f\leq\bigvee G\vee\bigvee G=\bigvee G$.

When $S$ is distributive, $\vdash$ has a simpler reduction even for $F\in\mathsf{F}S $, namely
\begin{equation}\label{vDistributive}
F\vdash G\qquad\Leftrightarrow\qquad F_\geq\leq\bigvee G,
\end{equation}
(recall from our conventions in \autoref{Preliminaries} that the latter expression unpacks to mean $p\leq\bigvee G$ whenever $p\leq F$, i.e. whenever $p$ is below every $f\in F$).  Indeed, if $F\vdash G$ and $p\leq F$ then certainly $p\leq f\vee\bigvee G$, for all $f\in F$, so $p\leq\bigvee G\vee\bigvee G=\bigvee G$, showing that $F_\geq\leq\bigvee G$.  Conversely, say $F_\geq\leq\bigvee G$ and $p\leq f\vee q$, for all $f\in F$.  Letting $F=\{f_1,\cdots,f_n\}$, distributivity yields $f_1'\leq f_1$ and $q_1\leq q$ with $p=f_1'\vee q_1$.  As $f_1'\leq p\leq f_2\vee q$, distributivity again yields $f_2'\leq f_2$ and $q_2\leq q$ with $f_1'=f_2'\vee q_2$.  Continuing in this way, we get $f'_n\in F_\geq$ with $p\leq f'_n\vee q\leq\bigvee G\vee q$, as $F_\geq\leq\bigvee G$, showing that $F\vdash G$.

When $S$ is a distributive lattice, $F_\geq=(\bigwedge F)_\geq$ and so we immediately see that this characterisation of $\vdash$ is equivalent to \eqref{MeetJoinCover} above (where $\bigwedge\emptyset=1$ if $S$ has a maximum $1$, otherwise $\emptyset\nvdash G$, for all $G\in\mathsf{F}S$).  This even characterises distributive lattices.  To see this, assume \eqref{MeetJoinCover} holds and take any $e,f,q\in S$.  Certainly $e\wedge f\leq e\wedge f$ so \eqref{MeetJoinCover} yields $\{e,f\}\vdash e\wedge f$.  As $p\leq e\vee q$ and $p\leq f\vee q$, for $p=(e\vee q)\wedge(f\vee q)$, it follows that $p\leq(e\wedge f)\vee q\leq p$, i.e. $(e\wedge f)\vee q=(e\vee q)\wedge(f\vee q)$, showing that $S$ is distributive.
\end{xpl}

The above is a `top down' way of defining Scott relations from $\vee$-semilattices.  There is also a different `bottom up' construction for arbitrary posets and even more general transitive relations.  As a result, our duality will also encompass previous work of Bice-Starling -- the Scott relation $\vdash$ below corresponds to $\precapprox$ in \cite[Definition 6.1]{BiceStarling2016} and $\widehat{\mathsf{D}}$ in \cite[\S1.3]{BiceStarling2020HTight}, which are in turn based on Lenz's arrow relation \cite[Definition 5.1]{Lenz2008} and Exel's covers \cite[Definition 15.3]{Exel2008}.

\begin{xpl}[Ordered Sets]\label{OrderedSets}
Given a relation $<$ on a set $S$, we define $\perp$ on $S$ by
\[s\perp t\qquad\Leftrightarrow\qquad\{s,t\}_>=\emptyset.\]
For any $Q\subseteq S$, it follows that
\[Q_\perp=\{s\in S:s^>\cap Q^>=\emptyset\}.\]
If $<$ is transitive then we have an entailment $\vdash$ on $\mathsf{F}S$ defined by
\[\tag{$\perp$-Cover}\label{wedgeperpDensity}F\vdash G\qquad\Leftrightarrow\qquad F_>\cap G_\perp=\emptyset.\]
Indeed, monotonicity follows from the fact that $G\subseteq H$ implies $H_\perp\subseteq G_\perp$ and $H_>\subseteq G_>$.
Conversely, say $\{s\}\cup F\vdash G$ and $F\vdash G\cup\{s\}$.  If we had $f\in F_>\cap G_\perp$ then $f\notin s_\perp$ -- otherwise $f\in F_>\cap G_\perp\cap s_\perp=F_>\cap(G\cup\{s\})_\perp=\emptyset$, as $F\vdash G\cup\{s\}$, a contradiction.  Then $f_>\subseteq F_>\cap G_\perp$, as $<$ is transitive, so this would mean that we have $p\in\{f,s\}_>\subseteq(\{s\}\cup F)_>\cap G_\perp=\emptyset$, as $\{s\}\cup F\vdash G$, again a contradiction.  Thus $F_>\cap G_\perp=\emptyset$, i.e. $F\vdash G$, proving the cut rule and showing $\vdash$ is an entailment.

If $S$ is also \emph{rounded} in that $S\subseteq S^<$, i.e. $s^>\neq\emptyset$, for all $s\in S$, then $\vdash$ is also $1$-reflexive and hence a Scott relation.  Indeed, $s^>\neq\emptyset$ is equivalent to $s\notin s_\perp$ and hence $s^>\cap s_\perp=\emptyset$, again by transitivity, i.e. $s\vdash s$.

In this case, we can also view $\vdash$ as a special case of \eqref{Dcl}, once we identify each $s\in S$ with $s^>$.  To see this, consider $S$ in the Alexandroff topology consisting of all down-sets, i.e.
\[\mathcal{O}(S)=\{O\subseteq S:\forall s\in O\ (s^>\subseteq O)\}.\]
For any $s\in S$, note $\{s\}\cup s^>$ is open (as $<$ is transitive) and hence the smallest open set containing $s$.  Also $s^>\cap O=\emptyset$ implies $s\notin O$, for any $s\in S$ and $O\in\mathcal{O}(S)$ (as $s^>\neq\emptyset$ because $S$ is rounded).  Thus $s\notin\mathrm{cl}(G^>)$ iff $(\{s\}\cup s^>)\cap G^>=\emptyset$ iff $s^>\cap G^>=\emptyset$ iff $s\in G_\perp$, i.e. $S\setminus\mathrm{cl}(G^>)=G_\perp$.  For all $F,G\in\mathsf{F}S$, it follows that
\[F\vdash G\qquad\Leftrightarrow\qquad F_>\subseteq S\setminus G_\perp\qquad\Leftrightarrow\qquad\bigcap_{f\in F}f^>\subseteq\mathrm{cl}\Big(\bigcup_{g\in G}g^>\Big).\]
\end{xpl}

Although not closely related to our work here, convexity spaces are another area where entailments crop up naturally (see \cite{VanDeVel1993} and \cite{Kubis2002}).

\begin{xpl}[Convexity Spaces]\label{ConvexitySpaces}
A \emph{convexity} on a set $S$ is a family $\mathcal{C}\subseteq\mathcal{P}(S)$ of subsets of $S$ that is closed under arbitrary intersections and directed unions (including the empty intersection and union so necessarily $\emptyset,S\in \mathcal{C}$).  The elements of $\mathcal{C}$ are said to be \emph{convex}.  If $H\in \mathcal{C}$ and $S\setminus H\in \mathcal{C}$ then $H$ is a \emph{half-space}.  We call a convexity \emph{Kakutani} if disjoint convex sets are separated by half-spaces, i.e.
\[\tag{Kakutani}C,D\in \mathcal{C}\text{ and }C\cap D=\emptyset\ \ \Rightarrow\ \ \exists H\ (C\subseteq H\in \mathcal{C}\text{ and }D\subseteq S\setminus H\in \mathcal{C}).\]
The \emph{convex hull} $\mathrm{conv}(T)$ of any $T\subseteq S$ is the convex set generated by $T$, i.e. the smallest convex set containing $T$
\[\mathrm{conv}(T)=\bigcap\{C\in\mathcal{C}:T\subseteq C\}.\]
A \emph{polytope} is the convex hull $\mathrm{conv}(F)$ of some $F\in\mathsf{F}S$.  Convexities are always determined by their polytopes in that
\[C\in\mathcal{C}\qquad\Leftrightarrow\qquad\forall F\in\mathsf{F}C\ (\mathrm{conv}(F)\subseteq C).\]

Given a convexity $\mathcal{C}$ on a set $S$, we define $ \vdash$ on $\mathsf{F}S$ by
\[F \vdash G\qquad\Leftrightarrow\qquad\mathrm{conv}(F)\cap\mathrm{conv}(G)\neq\emptyset.\]
We immediately see that $\vdash$ is monotone, $1$-reflexive and symmetric.  We claim that $\vdash$ satisfies the cut rule precisely when $\mathcal{C}$ is Kakutani.  Indeed, if $\mathcal{C}$ is Kakutani and $F\not \vdash G$ then we have a half-space $H\supseteq F$ with $H\cap G=\emptyset$.  For any $s\in S$, either $s\in H$, in which case $\{s\}\cup F\not \vdash G$, or $s\notin H$, in which case $F\not \vdash G\cup\{s\}$, proving the cut rule.  Conversely, say the cut rule holds and we are given disjoint $C,D\in \mathcal{C}$.  By Kuratowski-Zorn, we have maximal $H\in\mathcal{C}$ with $C\subseteq H\subseteq S\setminus D$ and maximal $I\in \mathcal{C}$ with $D\subseteq I\subseteq S\setminus H$.  If we had $s\in S\setminus(H\cup I)$ then maximality would yield finite $F\subseteq H$ such that we have $d\in\mathrm{conv}(F\cup\{s\})\cap D$, as well as finite $G\subseteq I$ such that we have $h\in H\cap\mathrm{conv}(G\cup\{s\})$.  But then $F\cup\{h,s\} \vdash G\cup\{d\}$ and $F\cup\{h\} \vdash G\cup\{d,s\}$ and hence $F\cup\{h\} \vdash G\cup\{d\}$, by the cut rule, even though $F\cup\{h\}\subseteq H\in\mathcal{C}$, $G\cup\{d\}\subseteq I\in\mathcal{C}$ and $H\cap I=\emptyset$, a contradiction.  Thus we must have had $S=H\cup I$, i.e. $H$ is a half-space separating $C$ and $D$, so $S$ is Kakutani.

In other words, Kakutani convexities yield Scott relations.  This even extends to pairs of convexities on the same set \textendash\, see \cite{Kubis2002}.
\end{xpl}

Another simple example of a symmetric Scott relation is given by the meet relation on $\mathsf{F}S$, which we denote by $\between$ as in \cite{Vickers2004}, i.e.
\[\tag{Meet Relation}F\between G\qquad\Leftrightarrow\qquad F\cap G\neq\emptyset.\]
This can be verified directly or noted immediately as a special case of the above example where $\mathcal{C}=\mathsf{P}S=\{T:T\subseteq S\}$ is the discrete convexity.  Note that a monotone relation $\vdash$ on $\mathsf{F}S$ is $1$-reflexive precisely when $\between\ \subseteq\ \vdash$.

A natural example of a symmetric entailment that is not $1$-reflexive comes from the diagonal relation $\bowtie$ on $\mathsf{FF}S$.  As in \cite{Vickers2004}, the diagonal relation will also play a fundamental role soon when it comes to defining cut-composition.  First consider the \emph{selections} and \emph{supersets} of any $\mathcal{Q}\subseteq\mathsf{F}S$ given by
\begin{align*}
\tag{Selections}\mathcal{Q}_\between&=\{G\in\mathsf{F}S:\forall F\in\mathcal{Q}\ (F\between G)\}.\\
\tag{Supersets}\mathcal{Q}^\subseteq&=\{G\in\mathsf{F}S:\exists F\in\mathcal{Q}\ (F\subseteq G)\}.
\end{align*}
Note if $S\subseteq\mathsf{P}X$ is itself a family of subsets of some set $X$ then, for any $\mathcal{F}\in\mathsf{FF}S$,
\[\bigcap_{F\in\mathcal{F}}\bigcup F=\bigcup_{F\in\mathcal{F}_\between}\bigcap F.\]
Accordingly, we think of $\mathcal{F}$ and $\mathcal{F}_\between$ as representing the same set in different ways.

\begin{xpl}
The \emph{diagonal relation} $\bowtie$ on $\mathsf{FF}S$ is defined by
\[\mathcal{F}\bowtie\mathcal{G}\qquad\Leftrightarrow\qquad\mathcal{F}_\between\subseteq\mathcal{G}^\subseteq\qquad\Leftrightarrow\qquad\forall F\in\mathcal{F}_\between\ \exists G\in\mathcal{G}\ (G\subseteq F).\]
In other words, $\mathcal{F}$ is diagonal to $\mathcal{G}$ if every selection of $\mathcal{F}$ contains a subset in $\mathcal{G}$.  Again to aid one's intuition, note if $S\subseteq\mathsf{P}X$ is a family of subsets of a set $X$ then
\[\mathcal{F}\bowtie\mathcal{G}\qquad\Rightarrow\qquad\bigcap_{F\in\mathcal{F}}\bigcup F\subseteq\bigcup_{G\in\mathcal{G}}\bigcap G.\]
Thus we think of $\bowtie$ as signifying containment of the sets represented by $\mathcal{F}$ and $\mathcal{G}$.

Like the relations from \autoref{ConvexitySpaces}, the diagonal relation is symmetric, i.e.
\[\mathcal{F}\bowtie\mathcal{G}\qquad\Leftrightarrow\qquad\mathcal{G}\bowtie\mathcal{F},\]
for all $\mathcal{F},\mathcal{G}\in\mathsf{F}\mathsf{F} S  $.  To see this, note that $\mathcal{F}_{\between\between}=\mathcal{F}^\subseteq$, $(\mathcal{F}^\subseteq)_\between=(\mathcal{F}_\between)^\subseteq=\mathcal{F}_\between$ and
\[\mathcal{F}\subseteq\mathcal{G}\quad\Rightarrow\quad\mathcal{G}_\between\subseteq\mathcal{F}_\between.\]
So if $\mathcal{F}\bowtie\mathcal{G}$ then $\mathcal{F}_\between\subseteq\mathcal{G}^\subseteq$ and hence $\mathcal{G}_\between=(\mathcal{G}^\subseteq)_\between\subseteq\mathcal{F}_{\between\between}=\mathcal{F}^\subseteq$, i.e. $\mathcal{G}\bowtie\mathcal{F}$.\linebreak  Alternatively, this is immediate from the symmetric characterisation of $\bowtie$ in \cite[Lemma 2.2]{JungKegelmannMoshier1999}, specifically $\mathcal{F}\bowtie\mathcal{G}$ if and only if $F\between G$, for all $F\in\mathcal{F}_\between$ and $G\in\mathcal{G}_\between$, i.e.
\[\mathcal{F}\bowtie\mathcal{G}\qquad\Leftrightarrow\qquad\mathcal{F}_\between\ \between\ \mathcal{G}_\between\]

We claim that the diagonal relation restricted to $\mathsf{F}(\mathsf{F}S\setminus\{\emptyset\})$ is an entailment.  To see that $\bowtie$ is monotone, note $\mathcal{F}\bowtie\mathcal{G}$ implies $(\{E\}\cup\mathcal{F})_\between\subseteq\mathcal{F}_\between\subseteq\mathcal{G}^\subseteq\subseteq(\mathcal{G}\cup\{E\})^\subseteq$ and hence $\{E\}\cup\mathcal{F}\bowtie\mathcal{G}$ and $\mathcal{F}\bowtie\mathcal{G}\cup\{E\}$, for all $E\in\mathsf{F}S$.  Conversely, if $\mathcal{F}\bowtie\mathcal{G}\cup\{E\}$ then, for any $F\in\mathcal{F}_\between$, either $F\in\mathcal{G}^\subseteq$ or $E\subseteq F$.  If $\{E\}\cup\mathcal{F}\bowtie\mathcal{G}$ too then $\emptyset\neq E\subseteq F$ implies that $F\in(\mathcal{F}\cup\{E\})_\between$ and hence $F\in\mathcal{G}^\subseteq$ again.  So either way $\mathcal{F}_\between\subseteq\mathcal{G}^\subseteq$, i.e. $\mathcal{F}\bowtie\mathcal{G}$.  Thus the cut rule is also satisfied and $\bowtie$ is an entailment on $\mathsf{F}(\mathsf{F}S\setminus\{\emptyset\})$.

However $\bowtie$ is not $1$-reflexive and hence not a Scott relation.  Indeed, as long as we have distinct $s,t\in S$ then $\{\{s,t\}\}\nvdash\{\{s,t\}\}$ because $\{s,t\}\nsubseteq\{s\}\in\{\{s,t\}\}_\between$.
\end{xpl}

\subsection{Composition}\label{Composition}
Entailments are always $1$-transitive, i.e.
\[\tag{$1$-Transitive}\label{1Transitivity}p\vdash q\vdash r\qquad\Rightarrow\qquad p\vdash r,\]
for any entailment $\vdash$ on $\mathsf{F}S$ and any $p,q,r\in S$.  Indeed, $p\vdash q\vdash r$ implies $p\vdash\{q,r\}$ and $\{p,q\}\vdash r$, by monotonicity, and hence $p\vdash r$, by the cut rule.  More symbolically, this can be stated as ${}_1\hspace{-3pt}\vdash_1\circ\ {}_1\hspace{-3pt}\vdash_1\ \subseteq\,{}_1\hspace{-3pt}\vdash_1$.  In this form, transitivity can be extended to $\vdash$ itself as long as we replace standard composition with `cut-composition' from \cite{JungKegelmannMoshier1999} and \cite{Vickers2004} (so named due to its connections with the cut rule).

\begin{dfn}
For any relations $\vdash\ \subseteq R\times\mathsf{F}S$ and $\vDash\ \subseteq\mathsf{F}S\times T$, we define
\[\vdash\bullet\vDash\quad=\quad\vdash_\forall\circ\bowtie\circ\mathrel{{}_\forall\hspace{-3pt}\vDash}\]
The resulting relation $\vdash\bullet\vDash\ \subseteq R\times T$ is called the \emph{cut-composition} of $\vdash$ and $\vDash$.
\end{dfn}

More explicitly, $r\vdash\bullet\vDash t$ means we can find $\mathcal{F},\mathcal{G}\in\mathsf{FF}S$ with $\mathcal{F}\bowtie\mathcal{G}$ such that $r\vdash F$, for all $F\in\mathcal{F}$, and $G\vDash t$, for all $G\in\mathcal{G}$.  If $\vDash$ is a lower relation, this is equivalent to saying we have $\mathcal{F}\in\mathsf{FF}S$ such that $r\vdash F$, for all $F\in\mathcal{F}$, and $G\vDash t$, for all $G\in\mathcal{F}_\between$.  Alternatively, if $\vdash$ is an upper relation, this is equivalent to saying that we have $\mathcal{G}\in\mathsf{FF}S$ such that $r\vdash F$, for all $F\in\mathcal{G}_\between$, and $G\vDash t$, for all $G\in\mathcal{G}$.

The next result will be useful to show cut-composition is associative.  For the proof, it will be convenient to introduce the $\wedge$ operation defined on $\mathcal{Q},\mathcal{R}\in\mathsf{PF}S$ by
\[\mathcal{Q}\wedge\mathcal{R}=\{F\cup G:F\in\mathcal{Q}\text{ and }G\in\mathcal{R}\}.\]
Note $\wedge$ and $\cup$ are dual relative to selections, i.e. for any $\mathcal{Q},\mathcal{R}\in\mathsf{PF}S$,
\[(\mathcal{Q}\wedge\mathcal{R})_\between=\mathcal{Q}_\between\cup\mathcal{R}_\between\qquad\text{and}\qquad(\mathcal{Q}\cup\mathcal{R})_\between=\mathcal{Q}_\between\wedge\mathcal{R}_\between=\mathcal{Q}_\between\cap\mathcal{R}_\between.\]
Also $\wedge$ is associative and we thus extend it to finite sequences in the usual way, e.g. if $\mathcal{Q}_1,\ldots,\mathcal{Q}_3\in\mathsf{PF}S$ then $\bigwedge_{k\leq 3}\mathcal{Q}_k=((\mathcal{Q}_1\wedge\mathcal{Q}_2)\wedge\mathcal{Q}_3)=(\mathcal{Q}_1(\wedge\mathcal{Q}_2\wedge\mathcal{Q}_3))$.

\begin{prp}
For any $\vdash\ \subseteq R\times\mathsf{F}S$ and lower $\vDash\ \subseteq\mathsf{F}S\times T$,
\begin{equation}\label{diamondforall}
(\vdash\bullet\vDash)_\forall\ =\ \vdash\bullet\vDash_\forall.
\end{equation}
\end{prp}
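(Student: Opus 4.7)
The plan is to show both inclusions separately, treating $\supseteq$ as essentially free and concentrating effort on $\subseteq$.

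For the easy direction, I would unpack the cut-composition on the right: $r\vdash\bullet\vDash_\forall H$ supplies $\mathcal{F},\mathcal{G}\in\mathsf{FF}S$ with $\mathcal{F}\bowtie\mathcal{G}$ witnessing $r\vdash F$ for $F\in\mathcal{F}$ and $G\vDash_\forall H$ for $G\in\mathcal{G}$. Since $G\vDash_\forall H$ just means $G\vDash h$ for every $h\in H$, the very same pair $\mathcal{F},\mathcal{G}$ serves as a cut-composition witness for every individual $h\in H$, yielding $r\vdash\bullet\vDash h$ and hence $r\,(\vdash\bullet\vDash)_\forall H$.

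For the non-trivial direction, assume $r\,(\vdash\bullet\vDash)_\forall H$. For each $h\in H$ I would choose witnesses $\mathcal{F}_h,\mathcal{G}_h\in\mathsf{FF}S$ with $\mathcal{F}_h\bowtie\mathcal{G}_h$, $r\vdash F$ for every $F\in\mathcal{F}_h$, and $G\vDash h$ for every $G\in\mathcal{G}_h$, and then glue them into a single witness by setting
\[\mathcal{F}=\bigcup_{h\in H}\mathcal{F}_h\qquad\text{and}\qquad\mathcal{G}=\bigwedge_{h\in H}\mathcal{G}_h,\]
where the empty $\bigwedge$ is taken to be the unit $\{\emptyset\}$ so that $H=\emptyset$ collapses to $\mathcal{F}=\emptyset,\mathcal{G}=\{\emptyset\}$, which trivially satisfies everything required. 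Of the three conditions to verify, the only non-immediate one is $\mathcal{F}\bowtie\mathcal{G}$. Using the duality $(\mathcal{Q}\cup\mathcal{R})_\between=\mathcal{Q}_\between\cap\mathcal{R}_\between$ recorded just before the proposition, any $F\in\mathcal{F}_\between$ lies in every $(\mathcal{F}_h)_\between\subseteq(\mathcal{G}_h)^\subseteq$, so I can pick $G_h\in\mathcal{G}_h$ with $G_h\subseteq F$ for each $h\in H$; then $\bigcup_{h\in H}G_h$ is an element of $\mathcal{G}$ contained in $F$, placing $F\in\mathcal{G}^\subseteq$. Finally, any $G\in\mathcal{G}$ has the form $G=\bigcup_{h\in H}G_h$ with $G_h\in\mathcal{G}_h$, so the hypothesis that $\vDash$ is lower upgrades $G_h\vDash h$ to $G\vDash h$ for every $h\in H$, giving $G\vDash_\forall H$ as required.

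The main obstacle is recognising the correct combination rule: unioning the $\mathcal{F}_h$ while $\wedge$-ing the $\mathcal{G}_h$. This asymmetric choice is forced by the selection/superset dualities $(\mathcal{Q}\cup\mathcal{R})_\between=\mathcal{Q}_\between\cap\mathcal{R}_\between$ and $(\mathcal{Q}\wedge\mathcal{R})^\subseteq\supseteq\mathcal{Q}^\subseteq,\mathcal{R}^\subseteq$ together with the fact that only lowerness of $\vDash$ permits us to enlarge the $G_h$ into a single $G$ that still entails every element of $H$. Without the lowerness hypothesis one could retain $G_h\vDash h$ only for the specific summand $G_h$, which is insufficient to witness the joint condition $G\vDash_\forall H$, so the asymmetry between the two sides of \eqref{diamondforall} would genuinely fail.
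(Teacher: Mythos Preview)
Your proof is correct and follows exactly the same approach as the paper: the easy direction reuses a single witness pair, and the hard direction combines the per-$h$ witnesses via $\mathcal{F}=\bigcup_h\mathcal{F}_h$ and $\mathcal{G}=\bigwedge_h\mathcal{G}_h$, invoking lowerness of $\vDash$ to get $G\vDash_\forall H$. You have simply spelled out in more detail the verification of $\mathcal{F}\bowtie\mathcal{G}$ and the $H=\emptyset$ case that the paper leaves implicit.
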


\begin{proof}
Take any $r\in R$ and $H\in\mathsf{F}T$.  If $r\vdash\mathcal{F}\bowtie\mathcal{G}\vDash H$ then it certainly follows that $r\vdash_\forall\circ\bowtie\circ\mathrel{{}_\forall\hspace{-3pt}\vDash}h$, for all $h\in H$, i.e. $r\vdash\bullet\vDash H$.  Conversely, $r\vdash\bullet\vDash H$ means that, for each $h\in H$, we have $\mathcal{F}_h,\mathcal{G}_h\in\mathsf{FF}S$ with $r\vdash\mathcal{F}_h\bowtie\mathcal{G}_h\vDash h$.  Letting $\mathcal{F}=\bigcup_{h\in H}\mathcal{F}_h$ and $\mathcal{G}=\bigwedge_{h\in H}\mathcal{G}_h$, it follows that $r\vdash\mathcal{F}\bowtie\mathcal{G}\vDash H$, as $\vDash$ is a lower relation.  This completes the proof of \eqref{diamondforall}.
\end{proof}

Likewise, for any upper $\vdash\ \subseteq R\times\mathsf{F}S$ and any $\vDash\ \subseteq\mathsf{F}S\times T$,
\begin{equation}\label{foralldiamond}
{}_\forall(\vdash\bullet\vDash)\ =\ {}_\forall\hspace{-3pt}\vdash\bullet\vDash.
\end{equation}
Thus, for any upper $\vdash\ \subseteq R\times\mathsf{F}S$, monotone $\vDash\ \subseteq\mathsf{F}S\times\mathsf{F}T$ and lower $\vDdash\ \subseteq\mathsf{F}T\times U$,
\begin{align*}
(\vdash\bullet\vDash)\ \bullet\vDdash\ &=\ (\vdash\bullet\vDash)_\forall\ \circ\bowtie\circ\mathrel{{}_\forall\hspace{-3pt}\vDdash}\\
&=\ \ \vdash_\forall\circ\bowtie\circ\mathrel{{}_\forall\hspace{-3pt}\vDash_\forall}\circ\bowtie\circ\mathrel{{}_\forall\hspace{-3pt}\vDdash}\\
&=\ \ \vdash_\forall\circ\bowtie\circ\mathrel{{}_\forall(\vDash\bullet\vDdash)}\\
&=\ \ \vdash\bullet\ (\vDash\bullet\vDdash).
\end{align*}
In particular, cut-composition is associative on monotone relations, as similarly shown in \cite[Lemma 3.2]{JungKegelmannMoshier1999} and \cite[Proposition 32]{Vickers2004}.

Given $\vdash\ \subseteq R\times\mathsf{F}S$, it will be convenient to define $\vdash_\between$ on $R\times\mathsf{FF}S$ by
\[r\vdash_\between\mathcal{F}\qquad\Leftrightarrow\qquad r\vdash\mathcal{F}_\between,\]
i.e. $r\vdash_\between\mathcal{F}$ means $r\vdash G$, for all $G\in\mathcal{F}_\between$.  We immediately see that the relations $({}_\forall\hspace{-3pt}\vdash)_\between$ and ${}_\forall(\vdash_\between)$ coincide, so ${}_\forall\hspace{-3pt}\vdash_\between$ is unambiguous.  Also, if $\vdash$ is an upper relation,
\[\vdash_\between\ \ =\ \ \vdash_\forall\circ\bowtie.\]
The definition of cut-composition can then be rewritten as
\[\vdash\bullet\vDash\ \ =\ \ \vdash_\between\circ\mathrel{{}_\forall\hspace{-3pt}\vDash}.\]
By \eqref{diamondforall} and \eqref{foralldiamond}, the operation $\vdash\ \mapsto{}_\forall\hspace{-3pt}\vdash_\between$ then turns cut-composition into standard composition, i.e. for any upper $\vdash\ \subseteq R\times\mathsf{F}S$ and monotone $\vDash\ \subseteq\mathsf{F}S\times\mathsf{F}T$,
\[{}_\forall(\vdash\bullet\vDash)_\between\ =\ \ \mathrel{{}_\forall(\vdash\bullet\vDash)_\forall}\circ\bowtie\ \ =\ {}_\forall\hspace{-3pt}\vdash_\forall\circ\bowtie\circ\mathrel{{}_\forall\hspace{-3pt}\vDash}_\forall\circ\bowtie\ \ =\ {}_\forall\hspace{-3pt}\vdash_\between\circ\mathrel{{}_\forall\hspace{-3pt}\vDash_\between}.\]
Also, $\vdash$ is completely determined by ${}_\forall\hspace{-3pt}\vdash_\between$ as $r\vdash F$ is equivalent to $\{r\}\mathrel{{}_\forall\hspace{-3pt}\vdash_\between}\mathsf{1}F$.  It follows that the associativity of standard composition passes to cut-composition on monotone relations, thus providing an alternative proof of this fact.

Here is another simple but useful observation about the operation $\vdash\ \mapsto\ \vdash_\between$.

\begin{prp}
For any relation $\vdash\ \subseteq R\times S$,
\begin{equation}\label{existsbetween}
\vdash_{\exists\between}\ \ =\ \ \vdash_{\forall\exists}.
\end{equation}
\end{prp}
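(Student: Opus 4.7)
The plan is to unpack both sides of \eqref{existsbetween} in terms of a single fixed predicate. Fix $r\in R$ and let $P=\{s\in S:r\vdash s\}=r^\vdash$. Then $r\vdash_{\exists\between}\mathcal{F}$ unfolds to ``every selection $G\in\mathcal{F}_\between$ has some element in $P$'', while $r\vdash_{\forall\exists}\mathcal{F}$ unfolds to ``some $F\in\mathcal{F}$ has all its elements in $P$''. So \eqref{existsbetween} is equivalent to the purely combinatorial claim
\[\forall G\in\mathcal{F}_\between\ (G\cap P\neq\emptyset)\qquad\Leftrightarrow\qquad\exists F\in\mathcal{F}\ (F\subseteq P),\]
a finite version of the familiar fact that a family of finite sets admits a transversal avoiding $P$ iff no member of the family lies entirely inside $P$.

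The direction $(\Leftarrow)$ is immediate: if $F\in\mathcal{F}$ with $F\subseteq P$ and $G\in\mathcal{F}_\between$ then $\emptyset\neq F\cap G\subseteq P\cap G$.

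For $(\Rightarrow)$ I would argue by contraposition. Assume no $F\in\mathcal{F}$ satisfies $F\subseteq P$. For each $F\in\mathcal{F}$ choose $f_F\in F\setminus P$ and set $G=\{f_F:F\in\mathcal{F}\}$. Since $\mathcal{F}$ is finite, so is $G$; since $f_F\in G\cap F$ for every $F\in\mathcal{F}$, we have $G\in\mathcal{F}_\between$; yet $G\cap P=\emptyset$ by construction, contradicting the left-hand side.

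There is no real obstacle, only two degenerate cases to note. If $\mathcal{F}=\emptyset$, then $\emptyset\in\mathcal{F}_\between$, so the left side fails (the empty selection misses $P$) and the right side fails vacuously, so both sides agree. If $\emptyset\in\mathcal{F}$, then $\mathcal{F}_\between=\emptyset$, the left side holds vacuously, and the right side holds by taking $F=\emptyset\subseteq P$. These extremes also validate the ``pick $f_F\in F\setminus P$'' step in the main argument, since it is only invoked when every $F\in\mathcal{F}$ meets $S\setminus P$ nontrivially, which in particular requires each such $F$ to be nonempty.
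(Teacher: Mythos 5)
Your proof is correct and follows essentially the same route as the paper's: the easy direction uses that any selection must meet the witnessing $F$, and the other direction builds a selection $\{f_F\}_{F\in\mathcal{F}}$ by choosing an element of $F\setminus r^\vdash$ from each member. Your explicit treatment of the degenerate cases $\mathcal{F}=\emptyset$ and $\emptyset\in\mathcal{F}$ is a welcome addition that the paper leaves implicit.
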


\begin{proof}
If $r\vdash_{\forall\exists}\mathcal{F}$ then we have $F\in\mathcal{F}$ such that $r\vdash F$, i.e. $r\vdash f$, for all $f\in F$.  As every $G\in\mathcal{F}_\between$ contains an element of $F$, it follows that $r\vdash_\exists G_\between$.  Conversely, if $r\vdash_{\forall\exists}\mathcal{F}$ fails then there is no $F\in\mathcal{F}$ such that $r\vdash F$, i.e. for every $F\in\mathcal{F}$, we have $g_F\in F$ with $r\nvdash g_F$.  Letting $G=\{g_F:F\in\mathcal{F}\}\in\mathcal{F}_\between$, it follows that $r\vdash_\exists G$ fails and hence $r\vdash_\exists\mathcal{F}_\between$ fails.
\end{proof}

For any other $\vDash\ \subseteq\mathsf{F}S\times T$, the above result yields
\begin{equation}\label{existsdiamond}
\vdash_\exists\bullet\vDash\ \ =\ \ \vdash_{\exists\between}\circ\mathrel{{}_\forall\hspace{-3pt}\vDash}\ \ =\ \ \vdash_{\forall\exists}\circ\mathrel{{}_\forall\hspace{-3pt}\vDash}\ \ =\ \ \vdash_\forall\circ\vDash.
\end{equation}
We then see that cut-composition respects the operation $\vdash\ \mapsto\ \vdash_{1\exists}$.

\begin{cor}\label{1existsdiamond}
For any relations $\vdash\ \subseteq R\times\mathsf{F}S$ and $\vDash\ \subseteq\mathsf{F}S\times\mathsf{F}T$,
\[\vdash_{1\exists}\bullet\vDash_{1\exists}\ \ \subseteq\ (\vdash\bullet\vDash)_{1\exists}.\]
\end{cor}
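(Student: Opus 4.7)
The plan is to reduce the cut-composition on the left-hand side to an ordinary composition using the already established equation \eqref{existsdiamond}, and then to build explicit witness families $\mathcal{F},\mathcal{G}\in\mathsf{FF}S$ for the cut-composition on the right-hand side.

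First, recall that \eqref{existsdiamond} asserts $\vdash_\exists\bullet\vDash\ =\ \vdash_\forall\circ\vDash$ for any $\vdash\ \subseteq R\times S$ and $\vDash\ \subseteq\mathsf{F}S\times X$ (the target $X$ is arbitrary). Applying this with $\vdash_1\ \subseteq R\times S$ and $\vDash_{1\exists}\ \subseteq\mathsf{F}S\times\mathsf{F}T$ in place of $\vdash$ and $\vDash$ yields
\[\vdash_{1\exists}\bullet\vDash_{1\exists}\ =\ \vdash_{1\forall}\circ\vDash_{1\exists}.\]
So it suffices to show $\vdash_{1\forall}\circ\vDash_{1\exists}\ \subseteq\ (\vdash\bullet\vDash)_{1\exists}$.

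Suppose then that $r\vdash_{1\forall}F\vDash_{1\exists}H$ for some $F\in\mathsf{F}S$. Unpacking, $r\vdash\{f\}$ for every $f\in F$ and $F\vDash\{h\}$ for some $h\in H$. I will show $r\vdash\bullet\vDash\{h\}$, which then gives $r(\vdash\bullet\vDash)_{1\exists}H$ as desired. Set $\mathcal{F}:=\mathsf{1}F$ and $\mathcal{G}:=\{F\}$. Any element of $\mathcal{F}_\between$ is a finite subset of $S$ meeting each singleton $\{f\}$ with $f\in F$, hence is a superset of $F$; thus it contains $F\in\mathcal{G}$ as a subset, which gives $\mathcal{F}\bowtie\mathcal{G}$. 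The remaining requirements $r\vdash F'$ for every $F'\in\mathcal{F}$ and $G\vDash\{h\}$ for every $G\in\mathcal{G}$ are immediate from the choice of $F$ and $h$. The degenerate case $F=\emptyset$ causes no trouble, since then $\mathcal{F}_\between=\mathsf{F}S$ and $\mathcal{G}=\{\emptyset\}$, so $\mathcal{F}\bowtie\mathcal{G}$ still holds vacuously.

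The only real obstacle is spotting the right reduction. One could alternatively proceed directly: from $\mathcal{F}\bowtie\mathcal{G}$ with $r\vdash_{1\exists}F$ for every $F\in\mathcal{F}$ and $G\vDash_{1\exists}H$ for every $G\in\mathcal{G}$, choose $f_F\in F$ with $r\vdash\{f_F\}$, form the selection $F_0:=\{f_F:F\in\mathcal{F}\}\in\mathcal{F}_\between$, use $\mathcal{F}\bowtie\mathcal{G}$ to find $G_0\in\mathcal{G}$ with $G_0\subseteq F_0$, pick $h\in H$ with $G_0\vDash\{h\}$, and then apply the same $(\mathsf{1}G_0,\{G_0\})$ witness construction. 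Both routes turn on the same observation: that $(\mathsf{1}F,\{F\})$ is a minimal cut-composition witness for any $F\in\mathsf{F}S$.
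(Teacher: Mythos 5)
Your proof is correct and follows essentially the same route as the paper's: both reduce the left-hand side to $\vdash_{1\forall}\circ\vDash_{1\exists}$ via \eqref{existsdiamond} and then witness the cut-composition on the right with the pair $(\mathsf{1}F,\{F\})$, using $\mathsf{1}F\bowtie\{F\}$. The only difference is that you spell out the verification of $\mathsf{1}F\bowtie\{F\}$ and the empty case, which the paper leaves implicit.
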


\begin{proof}
By \eqref{existsdiamond}, $\vdash_{1\exists}\bullet\vDash_{1\exists}\ =\ \vdash_{1\forall}\circ\vDash_{1\exists}$.  So if $r\vdash_{1\exists}\bullet\vDash_{1\exists}G$ then we have $F\in\mathsf{F}S$ and $g\in G$ with $r\vdash_{1\forall}F\vDash g$.  This means $r\vdash\mathsf{1}F\bowtie F\vDash g$ so $r\vdash\bullet\vDash g$ and hence $r\mathrel{(\vdash\bullet\vDash)_{1\exists}}G$, as required.
\end{proof}

\subsection{Auxiliarity}

We wish to show that entailments are \emph{cut-transitive}, i.e.
\[\tag{Cut-Transitivity}\vdash\bullet\vdash\ \subseteq\ \vdash.\]
This will imply ${}_\forall\hspace{-3pt}\vdash_\between\circ\mathrel{{}_\forall\hspace{-3pt}\vdash_\between}\ ={}_\forall(\vdash\bullet\vdash)_\between\subseteq{}_\forall\hspace{-3pt}\vdash_\between$, i.e. ${}_\forall\hspace{-3pt}\vdash_\between$ is genuinely transitive on $\mathsf{FF}S$.  To prove this, we first note the cut rule extends to finite subsets as follows.

\begin{prp}\label{FTrans}
If $\vdash$ is an upper cut relation on $\mathsf{F}S$ and $F,G,H\in\mathsf{F}S$ then
\[\tag{$\mathsf{F}$-Cut}\label{FCut}\forall h\in H\ (\{h\}\cup F\vdash G)\quad\text{and}\quad F\vdash G\cup H\qquad\Rightarrow\qquad F\vdash G.\]
\end{prp}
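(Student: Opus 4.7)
The plan is to prove this by induction on the cardinality of $H$. The case $H=\emptyset$ is immediate, since then the second hypothesis $F\vdash G\cup H$ is already $F\vdash G$.

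For the inductive step, pick any $h\in H$ and set $H'=H\setminus\{h\}$. The idea is to use the single-element cut rule to eliminate $h$ from the hypothesis $F\vdash G\cup H = F\vdash (G\cup H')\cup\{h\}$, after which we can invoke the inductive hypothesis on $H'$. To apply cut with $s=h$, we need both $F\vdash (G\cup H')\cup\{h\}$ (which we have) and $\{h\}\cup F\vdash G\cup H'$. The latter follows from the assumption $\{h\}\cup F\vdash G$ by the \textbf{upper} property, since $G\subseteq G\cup H'$. Cut then yields $F\vdash G\cup H'$.

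We now have $\{h'\}\cup F\vdash G$ for every $h'\in H'\subseteq H$, together with $F\vdash G\cup H'$, so by the inductive hypothesis (applied with $H'$ in place of $H$) we conclude $F\vdash G$.

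I do not expect any real obstacle here: the only things used are the single-element cut rule (which is part of the hypothesis of a cut relation), the upper property (to enlarge right-hand sides), and straightforward induction. Since $|H'|<|H|$, the induction is well-founded. No reflexivity or lowerness is required, matching the stated hypotheses.
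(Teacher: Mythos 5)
Your proof is correct and follows essentially the same route as the paper: at each step you use upperness to pass from $\{h\}\cup F\vdash G$ to $\{h\}\cup F\vdash G\cup(H\setminus\{h\})$ and then apply the single-element cut rule to strip $h$ from the right-hand side. The paper phrases this as an iterative "continuing in this way" argument rather than a formal induction on $|H|$, but the content is identical.
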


\begin{proof}
Say $\vdash$ is an upper cut relation on $\mathsf{F}S$.  If $\{h\}\cup F\vdash G$, for all $h\in H$, and $F\vdash G\cup H$ then, for any $h\in H$, upperness yields $\{h\}\cup F\vdash G\cup(H\setminus\{h\})$ and hence $F\vdash G\cup(H\setminus\{h\})$, by the cut rule.  For any other $h'\in H$, we again see that $\{h'\}\cup F\vdash G\cup(H\setminus\{h,h'\})$ and hence $F\vdash G\cup(H\setminus\{h,h'\})$.  Continuing in this way we get $F\vdash G$, proving \eqref{FCut}.
\end{proof}

Likewise we see that, for any lower cut relation $\vdash$ and any $F,G,H\in\mathsf{F}S$,
\[\tag{Dual-$\mathsf{F}$-Cut}\label{DualFCut}H\cup F\vdash G\quad\text{and}\quad\forall h\in H\ (F\vdash G\cup\{h\})\qquad\Rightarrow\qquad F\vdash G.\]
Indeed, this is just \autoref{FTrans} applied to the opposite relation $\dashv$ instead of $\vdash$.  Modifying these results using another relation $\vDash$ yields natural notions of auxiliarity.

\begin{dfn}
A relation $\vdash\ \subseteq\mathsf{F}S\times\mathsf{F}T$ is \emph{auxiliary} to a relation $\vDash$ on $\mathsf{F}S$ if
\[\tag{Auxiliarity}\label{Auxiliarity}\forall h\in H\ (\{h\}\cup F\vdash G)\quad\text{and}\quad F\vDash H\qquad\Rightarrow\qquad F\vdash G,\]
for all $F,H\in\mathsf{F}S$ and $G\in\mathsf{F}T$.  We call $\vdash$ \emph{dual-auxiliary} to $\vDash$ if $T=S$ and
\[\tag{Dual-Auxiliarity}\label{DualAuxiliarity}H\vdash G\quad\text{and}\quad\forall h\in H\ (F\vDash G\cup\{h\})\qquad\Rightarrow\qquad F\vdash G,\]
for all $F,G,H\in\mathsf{F}S$.  We call $\vdash$ a \emph{semicut} relation if $\vdash$ is self-dual-auxiliary.
\end{dfn}

More explicitly, a relation $\vdash$ on $\mathsf{F}S$ is a semicut relation if, for all $F,G,H\in\mathsf{F}S$,
\[\tag{Semicut}\label{Semicut}H\vdash G\quad\text{and}\quad\forall h\in H\ (F\vdash G\cup\{h\})\qquad\Rightarrow\qquad F\vdash G.\]
Note it follows immediately from \eqref{DualFCut} that
\[\vdash\text{ is a lower cut relation}\qquad\Rightarrow\qquad\vdash\text{ is a semicut relation}.\]
Often (dual-)auxiliarity can be expressed via cut-composition as $\vDash\bullet\vdash\ \subseteq\ \vdash$.

\begin{prp}\label{DiamondvsAuxiliarity}
Take any $\vDash\ \subseteq\mathsf{F}S\times\mathsf{F}S$ and $\vdash\ \subseteq\mathsf{F}S\times\mathsf{F}T$.
\begin{enumerate}
\item\label{DiamondToAux} If $\vDash$ is a $1$-reflexive lower relation and $\vDash\bullet\vdash\ \subseteq\ \vdash$ then $\vdash$ is auxiliary to $\vDash$.
\item\label{AuxToDiamond} Conversely, if $\vdash$ is a lower relation and $\vdash$ is auxiliary to $\vDash$ then $\vDash\bullet\vdash\ \subseteq\ \vdash$.
\item\label{DualAux} If $S=T$, $\vDash$ is upper, $\vdash\ \subseteq\ \vDash$ and $\vdash$ is dual-auxiliary to $\vDash$ then $\vDash\bullet\vdash\ \subseteq\ \vdash$.
\end{enumerate}
\end{prp}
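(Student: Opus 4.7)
My plan is to expand the condition $F\vDash\bullet\vdash G$ using the simplified characterisations of cut-composition available under lowerness or upperness (stated right after the definition of $\bullet$) and then chase (dual-)auxiliarity on the resulting diagonal witnesses.

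For (1), given $F\vDash H$ and $\{h\}\cup F\vdash G$ for every $h\in H$, I would directly produce witnesses $\mathcal{F},\mathcal{G}\in\mathsf{FF}S$ certifying $F\vDash\bullet\vdash G$, whereupon the hypothesised inclusion delivers $F\vdash G$. The natural choice is $\mathcal{F}=\{H\}\cup\mathsf{1}F$ and $\mathcal{G}=\{\{h\}\cup F:h\in H\}$: every selection of $\mathcal{F}$ is forced by the singletons in $\mathsf{1}F$ to contain all of $F$ and by $\{H\}$ to meet $H$, so it contains some $\{h\}\cup F\in\mathcal{G}$, giving $\mathcal{F}\bowtie\mathcal{G}$. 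The side conditions $F\vDash\{f\}$ come from $1$-reflexivity and lowerness of $\vDash$, while $F\vDash H$ is given and $\{h\}\cup F\vdash G$ holds by hypothesis for each element of $\mathcal{G}$.

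For (2), lowerness of $\vdash$ lets me take $\mathcal{G}=\mathcal{F}_\between$, so $F\vDash\bullet\vdash G$ becomes the existence of $F_1,\ldots,F_n\in\mathsf{F}S$ with $F\vDash F_i$ for all $i$ and $H'\vdash G$ for every $H'\in\mathcal{F}_\between$. I would induct on $n$: the case $n=0$ forces $\mathcal{F}_\between=\mathsf{F}S$, so $\emptyset\vdash G$ and hence $F\vdash G$ by lowerness; for $n\geq 1$, apply auxiliarity with $H=F_1$, reducing to $\{f\}\cup F\vdash G$ for each $f\in F_1$, which follows from the inductive hypothesis applied to $\{F_2,\ldots,F_n\}$ with base set $\{f\}\cup F$. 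For (3), the dual simplification via upperness of $\vDash$ gives witnesses $G_1,\ldots,G_k$ with $G_j\vdash G$ and $F\vDash H'$ for all $H'\in\mathcal{G}_\between$; I would iterate dual-auxiliarity $k$ times, absorbing one $G_j$ per round into the right-hand side. Starting from $F\vDash G\cup\{g_1,\ldots,g_k\}$ for any transversal (obtained via upperness of $\vDash$), a dual-auxiliary step against $G_k$ with target $G\cup\{g_1,\ldots,g_{k-1}\}$ peels off $g_k$ to yield $F\vdash G\cup\{g_1,\ldots,g_{k-1}\}$, which then feeds the next round via $\vdash\subseteq\vDash$.

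The main obstacles I foresee are technical rather than conceptual: in (2) the inductive step wants $\{f\}\cup F\vDash F_i$ for $i\geq 2$, automatic if $\vDash$ is also lower; in (3) each dual-auxiliary round wants $G_j\vdash G\cup X$, automatic if $\vdash$ is upper. Both become trivial as soon as the relations are monotone (the setting in which the proposition is naturally invoked), so I would check whether these properties are tacitly intended or else search for a slicker iteration that sidesteps them.
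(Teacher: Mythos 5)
Your proof of (1) is exactly the paper's: the witnesses $\{H\}\cup\mathsf{1}F\ \bowtie\ \{\{h\}\cup F:h\in H\}$ are precisely the ones it uses. For (2) and (3) your peeling iteration is also the paper's argument (phrased there as ``continuing in this way'' rather than an explicit induction on $n$), and the two obstacles you flag are real: the paper uses them tacitly at exactly the corresponding steps. In (2) it applies \eqref{Auxiliarity} with base $F\cup F_0$ while citing only $F_0\vDash F_1$, i.e.\ it silently passes from $F_0\vDash F_1$ to $F\cup F_0\vDash F_1$; in (3) the dual-auxiliarity premise needed is $G_1\vdash G_0\cup G$ while only $G_1\vdash G_0$ is available, i.e.\ upperness of $\vdash$ is silently invoked. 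So there is no slicker iteration to look for \textendash\ the extra monotonicity is simply assumed without comment, and it is harmless in every application in the paper (there $\vDash$ is either $\vdash$ itself or $\Vdash$, both lower, and $\vdash$ is monotone). One bookkeeping repair is needed in your induction for (2): the inductive statement must carry the accumulated base into the left-hand sides of the selection premises, i.e.\ the invariant should read ``$H'\cup F'\vdash G$ for every $H'$ in the selections of the remaining family'' (this is the paper's ``$F\cup F_0\vdash G_0$ for all $F\in(\mathcal{F}\setminus\{F_1,\dots,F_k\})_\between$''). As you state it, the recursive call on $\{F_2,\dots,F_n\}$ would require $H'\vdash G$ for every $H'\in\{F_2,\dots,F_n\}_\between$, which is not available, since a selection of the smaller family need not be a selection of the full one; what is available is $H'\cup\{f\}\vdash G$, and the strengthened invariant absorbs this, with lowerness of $\vdash$ (a stated hypothesis of (2)) supplying the initial instance. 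The analogous remark applies to the right-hand sides in your iteration for (3).
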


\begin{proof}\
\begin{enumerate}
\item If $\vDash$ is a $1$-reflexive lower relation then, for any $F,H\in\mathsf{F}S$ and $G\in\mathsf{F}T$ such that $F\vDash H$ and $\{h\}\cup F\vdash G$, for all $h\in H$,
\[F\ \vDash\ \{H\}\cup\mathsf{1}F\ \bowtie\ \{\{h\}\cup F:h\in H\}\ \vdash\ G.\]
If $\vDash\bullet\vdash\ \subseteq\ \vdash$ then it follows that $F\vdash G$, showing that $\vdash$ is auxiliary to $\vDash$.

\item Say $\vdash$ is a lower relation auxiliary to $\vDash$.  Take $F_0\in\mathsf{F}S$ and $G_0\in\mathsf{F}T$ with $F_0\vDash\bullet\vdash G_0$, so we have $\mathcal{F},\mathcal{G}\in\mathsf{FF}S$ such that $F_0\vDash\mathcal{F}\bowtie\mathcal{G}\vdash G_0$.  Let $\mathcal{F}=\{F_1,\cdots,F_n\}$.  For each $F\in\mathcal{F}_\between$, we have $G\in\mathcal{G}$ with $F\supseteq G\vdash G_0$ so $F\cup F_0\vdash G_0$, as $\vdash$ is lower.  Put another way, $\{f\}\cup F\cup F_0\vdash G_0$, for all $F\in(\mathcal{F}\setminus\{F_1\})_\between$ and $f\in F_1$.  As $F_0\vDash F_1$, \eqref{Auxiliarity} yields $F\cup F_0\vdash G_0$.  But again this means $\{f\}\cup F\cup F_0\vdash G_0$, for all $F\in(\mathcal{F}\setminus\{F_1,F_2\})_\between$ and $f\in F_2$, and hence $F\cup F_0\vdash G_0$, by \eqref{Auxiliarity}, as $F_0\vDash F_2$.  Continuing in this way, we eventually get $F_0\vdash G_0$, showing that $\vDash\bullet\vdash\ \subseteq\ \vdash$.

\item Similarly, say $S=T$ and $\vdash$ is dual-auxiliary to some weaker upper relation $\vDash$.  Take $F_0,G_0\in\mathsf{F}S$ with $F_0\vDash\bullet\vdash G_0$, which means we have $\mathcal{F},\mathcal{G}\in\mathsf{FF}S$ such that $F_0\vDash\mathcal{F}\bowtie\mathcal{G}\vdash G_0$.  Let $\mathcal{G}=\{G_1,\cdots,G_n\}$.  For each $G\in\mathcal{G}_\between$, we have $F\in\mathcal{F}$ with $F_0\vDash F\subseteq G$ and hence $F_0\vDash G_0\cup G$, as $\vDash$ is upper.  Put another way, this means $F_0\vDash G_0\cup G\cup\{g\}$, for all $G\in(\mathcal{G}\setminus\{G_1\})_\between$ and $g\in G_1$.  As $G_1\vDash G_0$, \eqref{DualAuxiliarity} yields $F_0\vdash G_0\cup G$.  As $\vdash\ \subseteq\ \vDash$, for all $G\in(\mathcal{G}\setminus\{G_1,G_2\})_\between$ and $g\in G_2$, this again yields $F_0\vDash G_0\cup G\cup\{g\}$ and hence $F_0\vdash G_0\cup G$, by \eqref{DualAuxiliarity}, as $G_2\vdash G_0$.  Continuing in this way, we eventually get $F_0\vdash G_0$, showing that $\vDash\bullet\vdash\ \subseteq\ \vdash$. \qedhere
\end{enumerate}
\end{proof}

By \autoref{DiamondvsAuxiliarity} \eqref{DualAux}, every upper semicut relation is cut-transitive.  By \eqref{DualFCut}, this applies to all entailments.  Likewise, every lower self-auxiliary relation is cut-transitive, by \autoref{DiamondvsAuxiliarity} \eqref{AuxToDiamond}, which again applies to all entailments, by \eqref{FCut}.  Either argument thus yields the following.

\begin{cor}\label{DiagonalTransitive}
Entailments are always cut-transitive.
\end{cor}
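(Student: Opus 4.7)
The plan is to obtain this as a direct consequence of \autoref{DiamondvsAuxiliarity}, once we observe that every entailment is auxiliary (or dually, dual-auxiliary) to itself. Concretely, I would take a single entailment $\vdash$ on $\mathsf{F}S$ and specialise both relations in \autoref{DiamondvsAuxiliarity} to $\vdash$ itself.

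For the first route: since $\vdash$ is an entailment it is in particular an upper cut relation, so \autoref{FTrans} yields \eqref{FCut} for $\vdash$. Reading \eqref{FCut} with $\vDash\ =\ \vdash$ is exactly the assertion that $\vdash$ is auxiliary to itself in the sense of \eqref{Auxiliarity}. Because $\vdash$ is also a lower relation (entailments being monotone), the hypotheses of \autoref{DiamondvsAuxiliarity}\eqref{AuxToDiamond} are met with $\vDash\ =\ \vdash$, and its conclusion $\vDash\bullet\vdash\ \subseteq\ \vdash$ becomes $\vdash\bullet\vdash\ \subseteq\ \vdash$, i.e. cut-transitivity.

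Alternatively, one can take the dual route: as $\vdash$ is a lower cut relation, \eqref{DualFCut} says precisely that $\vdash$ is a semicut relation, i.e. self-dual-auxiliary in the sense of \eqref{DualAuxiliarity}. Since $\vdash$ is also upper and trivially $\vdash\ \subseteq\ \vdash$, \autoref{DiamondvsAuxiliarity}\eqref{DualAux} applies with $\vDash\ =\ \vdash$ and again delivers $\vdash\bullet\vdash\ \subseteq\ \vdash$.

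There is no real obstacle beyond these two observations; all the combinatorial content has already been absorbed into the proofs of \autoref{FTrans}, its dual \eqref{DualFCut}, and \autoref{DiamondvsAuxiliarity}. The corollary is thus essentially a bookkeeping specialisation of these earlier results, with either \eqref{FCut} or \eqref{DualFCut} supplying the self-(dual-)auxiliarity hypothesis needed to invoke \autoref{DiamondvsAuxiliarity}. I would simply write whichever of the two arguments seems more in line with the notation used immediately afterwards, and remark that the other choice gives an equally short alternative proof.
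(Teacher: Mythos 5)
Your proposal is correct and matches the paper's own argument, which likewise notes that either route works: by \eqref{DualFCut} every entailment is an upper semicut relation, so \autoref{DiamondvsAuxiliarity}\eqref{DualAux} applies, and by \eqref{FCut} every entailment is lower and self-auxiliary, so \autoref{DiamondvsAuxiliarity}\eqref{AuxToDiamond} applies. The only tiny imprecision is that \eqref{FCut} (resp.\ \eqref{DualFCut}) gives self-(dual-)auxiliarity only after using upperness (resp.\ lowerness) to pass from $F\vdash H$ to $F\vdash G\cup H$, but this is exactly the step the paper also takes implicitly.
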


A Scott relation $\vdash$ on $\mathsf{F}S$ is even \emph{cut-idempotent}, i.e.
\[\tag{Cut-Idempotence}\vdash\ =\ \vdash\bullet\vdash.\]
Indeed, if $\vdash$ is a $1$-reflexive upper relation then $F\vdash G$ implies $F\vdash\{G\}\bowtie\mathsf{1}G\vdash G$, showing that $\vdash\ \subseteq\ \vdash\bullet\vdash$.  While coherent frames can be encoded with Scott relations, the key insight of \cite{JungKegelmannMoshier1999} and \cite{Vickers2004} was that stably continuous frames can be encoded with more general cut-idempotent monotone relations.

\subsection{Divisibility}

To more precisely encode subbases of stably locally compact spaces we need to consider a slightly stronger version of the reverse inclusion above.  Indeed, note that any $1$-reflexive $\vdash$ on $\mathsf{F}S$ satisfies $g\vdash_{1\exists}G$, whenever $g\in G\in\mathsf{F}S$, and so the above observation actually shows that $\vdash$ is \emph{divisible}, meaning that
\[\tag{Divisibility}\vdash\ \ \subseteq\ \ \vdash\bullet\vdash_{1\exists}.\]

Alternatively, divisibility can be stated as a strong interpolation condition for $\vdash_\between$.  First note that, for any $\vdash$ on $\mathsf{F}S$, \eqref{existsbetween} allows us to define $\vdash_*$ on $\mathsf{FF}S$ by
\[\vdash_*\ \ =\ \ {}_\forall(\vdash_{1\forall\exists})\ \ =\ \ {}_\forall(\vdash_{1\exists\between}).\]
So $\mathcal{F}\vdash_*\mathcal{G}$ iff, for all $F\in\mathcal{F}$, we have $G\in\mathcal{G}$ with $F\vdash\mathsf{1}G$, i.e. $F\vdash g$, for all $g\in G$.  As long as $\vdash$ is upper, this implies $F\vdash H$, for all $H\in\mathcal{G}_\between$, i.e.
\[\vdash_*\ \ \subseteq\ {}_\forall\hspace{-3pt}\vdash_\between.\]
If $\vdash$ is also lower and divisible then \eqref{diamondforall} yields
\[\vdash_\between\ \ \subseteq\ (\vdash\bullet\vdash_{1\exists})_\between\ =\ \ \vdash_\between\circ\mathrel{{}_\forall(\vdash_{1\exists})_\forall}\circ\bowtie\ \ =\ \ \vdash_\between\circ\mathrel{{}_\forall(\vdash_{1\exists\between})}\ \ =\ \ \vdash_\between\circ\vdash_*.\]
On the other hand, if $\vdash_\between\ \ \subseteq\ \ \vdash_\between\circ\vdash_*$ then $F\vdash G$ implies $F\vdash_\between\mathsf{1}G$ so $F\vdash_\between\circ\vdash_* \mathsf{1}G$ and hence $F\vdash_\between\circ\vdash_{1\exists}G$.  Thus, for monotone $\vdash$,
\begin{equation}\label{BetweenInterpolation}
\vdash\text{ is divisible}\qquad\Leftrightarrow\qquad\vdash_\between\ \ \subseteq\ \ \vdash_\between\circ\vdash_*.
\end{equation}

Let us call any monotone divisible cut-transitive $\vdash$ a \emph{strong idempotent}, i.e. any monotone $\vdash$ satisfying $\vdash\bullet\vdash\ \subseteq\ \vdash\ \subseteq\ \vdash\bullet\vdash_{1\exists}$.  Under montonicity, $\vdash_{1\exists}\ \subseteq\ \vdash$ and so these inclusions actually become equalities, i.e. $\vdash$ is a strong idempotent when
\[\tag{Strong Idempotence}\label{StrongIdempotence}\supseteq_{\mathsf{F}S}\circ\vdash\circ\subseteq_{\mathsf{F}S}\ \ =\ \ \vdash\ \ =\ \ \vdash\bullet\vdash\ \ =\ \ \vdash\bullet\vdash_{1\exists}.\]
In particular, any strong idempotent is indeed cut-idempotent.  Also, by \autoref{DiamondvsAuxiliarity} \eqref{DualAux} (with $\vDash\ =\ \vdash$), any monotone divisible semicut relation is cut-transitive and thus a strong idempotent.  In fact, the converse also holds.

\begin{prp}\label{DiamondvsAuxiliarity2}
If $\vdash$ is a divisible upper relation on $\mathsf{F}S$ then
\[\vdash\text{ is a semicut relation}\qquad\Leftrightarrow\qquad\vdash\text{ is cut-transitive}.\]
\end{prp}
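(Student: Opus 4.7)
The forward direction — semicut implies cut-transitive — is immediate from \autoref{DiamondvsAuxiliarity}\eqref{DualAux} applied with $\vDash\ =\ \vdash$: upperness is given, the inclusion $\vdash\ \subseteq\ \vdash$ is trivial, and semicut is precisely the dual-auxiliarity of $\vdash$ to itself, so the cited result yields $\vdash\bullet\vdash\ \subseteq\ \vdash$. Crucially, divisibility plays no role here.

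For the reverse direction, suppose $\vdash$ is cut-transitive and that $H\vdash G$ together with $F\vdash G\cup\{h\}$ for every $h\in H$; my plan is to exhibit a cut-composition $F\vdash\bullet\vdash G$ so that cut-transitivity then delivers $F\vdash G$. In the trivial edge case $H=\emptyset$, the witness $\mathcal{F}=\emptyset$, $\mathcal{G}=\{\emptyset\}$ works: $\emptyset_\between=\mathsf{F}S=\{\emptyset\}^\subseteq$, so $\mathcal{F}\bowtie\mathcal{G}$; $F\vdash\mathcal{F}$ is vacuous; and $\mathcal{G}\vdash G$ reduces to the given $\emptyset\vdash G$; hence cut-transitivity closes the case.

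For $H$ a singleton $\{h\}$ (baby semicut), the hypothesis $\{h\}\vdash G$ combined with divisibility on $F\vdash G\cup\{h\}$ does the job: that divisibility yields $\mathcal{F}'\bowtie\mathcal{G}'$ with $F\vdash\mathcal{F}'$ and each $G'\in\mathcal{G}'$ satisfying $G'\vdash x$ for some $x\in G\cup\{h\}$. When $x\in G$, upperness gives $G'\vdash G$ at once; when $x=h$, the cut-composition $G'\vdash\bullet\vdash G$ through the singleton-compatible pair $\{\{h\}\}\bowtie\{\{h\}\}$, using $G'\vdash\{h\}$ and $\{h\}\vdash G$, yields $G'\vdash G$ by cut-transitivity. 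Then $\mathcal{G}'\vdash G$ holds uniformly, and cut-composition $F\vdash\bullet\vdash G$ through $(\mathcal{F}',\mathcal{G}')$ finishes baby semicut.

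The substantial technical hurdle is the general case $|H|\geq 2$: the same reduction leads to deducing $G'\vdash G$ from $G'\vdash h$ and $H\vdash G$, but the singleton trick no longer applies since $\{H\}\bowtie\{H\}$ fails as soon as $|H|\geq 2$ (indeed $A=\{h\}$ with $h\in H$ witnesses $\{H\}_\between\not\subseteq\{H\}^\subseteq$). My plan is to additionally apply divisibility to $H\vdash G$ itself, producing $\mathcal{H}_0\bowtie\mathcal{G}_0$ with $H\vdash H'$ for each $H'\in\mathcal{H}_0$ and, by upperness, $G''\vdash G$ for each $G''\in\mathcal{G}_0$, and then to thread a multi-step cut-composition through these auxiliary witnesses — via strong induction on $|H|$ combined with iterated divisibility — so as to bridge $G'\vdash h$ to $G'\vdash G$. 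The delicate point is controlling the combinatorics of $\bowtie$ along this path so that either the inductive hypothesis fires at strictly smaller parameters or the requisite compatibilities line up directly; this is precisely where upperness, divisibility and cut-transitivity must conspire, and it is the main obstacle I anticipate.
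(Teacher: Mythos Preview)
Your forward direction matches the paper's argument exactly.

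For the reverse direction, your treatment of $|H|\le 1$ is correct, but the general case $|H|\ge 2$ is left as an admitted obstacle, and the inductive plan you sketch is not the right angle of attack. The difficulty you isolate --- passing from $G'\vdash h$ and $H\vdash G$ to $G'\vdash G$ --- is genuinely not solvable at that granularity: a single $G'$ only controls one coordinate $h$ of $H$, and neither induction on $|H|$ nor a second application of divisibility to $H\vdash G$ supplies the missing coordinates.

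The paper avoids induction entirely by aggregating over all $h\in H$ at once. After splitting the divisibility witnesses for each $F\vdash G\cup\{h\}$ into $\mathcal{G}_h\vdash G$ and $\mathcal{H}_h\vdash h$ with $F\vdash(\mathcal{G}_h\cup\mathcal{H}_h)_\between$, the key move is to form the $\wedge$-product $\mathcal{H}=\bigwedge_{h\in H}\mathcal{H}_h$. Each $K\in\mathcal{H}$ is then a union $\bigcup_{h\in H}K_h$ with $K_h\in\mathcal{H}_h$, so $K\vdash h$ for every $h\in H$; thus $K\vdash\mathsf{1}H\bowtie\{H\}\vdash G$, and cut-transitivity gives $K\vdash G$. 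Setting $\mathcal{I}=\bigcup_{h\in H}\mathcal{G}_h\cup\mathcal{H}$, one has $\mathcal{I}\vdash G$, and a short selection calculation
\[
\mathcal{I}_\between=\bigcap_{h\in H}\mathcal{G}_{h\between}\cap\bigcup_{h\in H}\mathcal{H}_{h\between}\subseteq\bigcup_{h\in H}(\mathcal{G}_h\cup\mathcal{H}_h)_\between
\]
shows $F\vdash\mathcal{I}_\between$; one more application of cut-transitivity finishes. The $\wedge$-product is exactly the device that lets a single witness see all of $H$ simultaneously, which is what your per-$h$ approach cannot achieve.

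(One caveat: the step from $K_h\vdash h$ to $K\vdash h$ with $K\supseteq K_h$ uses lowerness of $\vdash$, so the paper's argument tacitly assumes $\vdash$ is monotone --- which is the only case the proposition is subsequently applied to.)
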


\begin{proof}
As noted above, if $\vdash$ is an upper semicut relation then $\vdash$ is cut-transitive, thanks to \autoref{DiamondvsAuxiliarity} \eqref{DualAux}.  Conversely, say $\vdash$ is a divisible upper cut-transitive relation on $\mathsf{F}S$ and take $F,G,H\in\mathsf{F}S$ such that $H\vdash G$ and $F\vdash G\cup\{h\}$, for all $h\in H$.  For all $h\in H$, divisibility and upperness yield $\mathcal{G}_h\vdash G$ and $\mathcal{H}_h\vdash h$ such that $F\vdash(\mathcal{G}_h\cup\mathcal{H}_h)_\between$.  Letting $\mathcal{H}=\bigwedge_{h\in H}\mathcal{H}_h$, note $\mathcal{H}\vdash\mathsf{1}H\bowtie H\vdash G$ so $\mathcal{H}\vdash G$, by cut-transitivity.  Setting $\mathcal{I}=\bigcup_{h\in H}\mathcal{G}_h\cup\mathcal{H}$, then $\mathcal{I}\vdash G$.  Moreover, $F\vdash\mathcal{I}_\between$ because
\[\mathcal{I}_\between=\bigcap_{h\in H}\mathcal{G}_{h\between}\cap(\bigcup_{h\in H}\mathcal{H}_{h\between})=\bigcup_{h\in H}\bigcap_{i\in H}\mathcal{G}_{i\between}\cap\mathcal{H}_{h\between}\subseteq\bigcup_{h\in H}\mathcal{G}_{h\between}\cap\mathcal{H}_{h\between}=\bigcup_{h\in H}(\mathcal{G}_h\cup\mathcal{H}_h)_\between\]
Thus $F\vdash G$, again by cut-transitivity, showing that $\vdash$ is a semicut relation.
\end{proof}

Thus we have the following alternative characterisation of strong idempotents:
\[\vdash\text{ is a strong idempotent}\quad\Leftrightarrow\quad\vdash\text{is a monotone divisible semicut relation}.\]
By \autoref{DiagonalTransitive}, it thus follows that any divisible entailment is a strong idempotent.  Conversely, we can prove the cut rule for strong idempotents which are auxiliary to some weaker $1$-reflexive relation.

\begin{prp}\label{DiamondCut}
If $\vDash$ is a $1$-reflexive lower relation on $\mathsf{F}S$ and $\vdash$ is an upper relation on $\mathsf{F}S$ satisfying $\vDash\bullet\vdash\ \subseteq\ \vdash\ \subseteq\ \vDash\bullet\vdash_{1\exists}$ then $\vdash$ is a cut relation.
\end{prp}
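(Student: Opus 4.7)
The plan is to derive the cut rule for $\vdash$ by decomposing the premise $F\vdash G\cup\{s\}$ using divisibility and then reassembling the pieces via the forward inclusion $\vDash\bullet\vdash\subseteq\vdash$. Suppose $\{s\}\cup F\vdash G$ and $F\vdash G\cup\{s\}$. Applying $\vdash\subseteq\vDash\bullet\vdash_{1\exists}$ to the latter yields $\mathcal F,\mathcal G\in\mathsf{FF}S$ with $F\vDash_\forall\mathcal F$, $\mathcal F\bowtie\mathcal G$, and each $G'\in\mathcal G$ entailing some singleton $\{t_{G'}\}$ with $t_{G'}\in G\cup\{s\}$. I would partition $\mathcal G=\mathcal G_G\cup\mathcal G_s$ accordingly; upperness of $\vdash$ then immediately gives $G'\vdash G$ for every $G'\in\mathcal G_G$, so these members are already good `end-pieces'.

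To produce a cut-composition witnessing $F\vDash\bullet\vdash G$, I would take $\mathcal F'=\mathcal F\cup\mathsf 1 F$ and $\mathcal G'=\mathcal G_G\cup\{G'\cup F:G'\in\mathcal G_s\}$. Every selection of $\mathcal F'$ has the form $H\cup F$ with $H\in\mathcal F_\between$, and since $H$ contains some $G_0'\in\mathcal G$ it follows that $H\cup F$ contains either the member $G_0'\in\mathcal G_G\subseteq\mathcal G'$ or the member $G_0'\cup F\in\mathcal G'$; hence $\mathcal F'\bowtie\mathcal G'$. The relation $F\vDash_\forall\mathcal F'$ is immediate from $F\vDash_\forall\mathcal F$ together with the $1$-reflexivity and lowerness of $\vDash$, which give $F\vDash\{f\}$ for each $\{f\}\in\mathsf 1 F$. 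Once every element of $\mathcal G'$ is shown to entail $G$, the hypothesis $\vDash\bullet\vdash\subseteq\vdash$ delivers $F\vdash G$.

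The main obstacle is thus the sub-claim that $G'\cup F\vdash G$ for each $G'\in\mathcal G_s$, i.e.\ whenever $G'\vdash s$ and $\{s\}\cup F\vdash G$. My plan here is a further invocation of $\vDash\bullet\vdash\subseteq\vdash$: applying divisibility to $\{s\}\cup F\vdash G$ produces $\{s\}\cup F\vDash_\forall\mathcal F_1\bowtie\mathcal G_1$ with every element of $\mathcal G_1$ entailing $G$ (by upperness), and applying it to $G'\vdash s$ produces $G'\vDash_\forall\mathcal A\bowtie\mathcal B$ with every $B\in\mathcal B$ satisfying $B\vdash s$. The technical heart of the argument is to splice these two decompositions together, using the left-hand pieces in $\mathcal A$ to substitute for the occurrences of $s$ appearing on the left of the first decomposition, thereby producing a single witness to $G'\cup F\vDash\bullet\vdash G$ whose $\vDash$-inequalities remain valid under lowerness and $1$-reflexivity of $\vDash$. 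Once this splicing is verified, the hypothesis $\vDash\bullet\vdash\subseteq\vdash$ closes the sub-claim and hence the proof of the cut rule.
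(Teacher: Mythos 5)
Your outer scaffolding coincides with the paper's: decompose $F\vdash G\cup\{s\}$ as $F\vDash\mathcal F\bowtie\mathcal G\vdash_{1\exists}G\cup\{s\}$, split $\mathcal G$ according to whether the witnessing singleton lies in $G$ or is $s$, dispose of the first part by upperness, and reassemble via $F\vDash\mathsf{1}F\cup\mathcal F\bowtie\mathcal G_G\cup\{G'\cup F:G'\in\mathcal G_s\}\vdash G$ before invoking $\vDash\bullet\vdash\ \subseteq\ \vdash$; the diagonality computation here is correct. The genuine gap is the sub-claim you explicitly defer: that $G'\cup F\vdash G$ whenever $G'\vdash s$ and $\{s\}\cup F\vdash G$. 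The ``splicing'' you propose for it cannot be carried out. Since $\vDash$ is only assumed $1$-reflexive and lower, it has no cut, transitivity or substitution property, so there is no licence to ``substitute the left-hand pieces of $\mathcal A$ for the occurrences of $s$'' in premises of the form $\{s\}\cup F\vDash E$ with $E\in\mathcal F_1$ --- that substitution is itself an instance of the cut rule, transplanted to $\vDash$. If instead one splices at the level of the composition, say with $\mathcal C=\mathcal A\cup\mathsf{1}F$ and $\mathcal D=\{B\cup F:B\in\mathcal B\}$, one is left needing $B\cup F\vdash G$ for each $B\in\mathcal B$ with $B\vdash s$, which is the original sub-claim with $B$ in place of $G'$: an infinite regress. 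Decomposing $G'\vdash s$ by divisibility only replaces it by finer statements of exactly the same shape, so it makes no progress.

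The paper's proof avoids all of this by \emph{not} decomposing $\{s\}\cup F\vdash G$ at all: it is used whole, as the single right-hand factor of the cut-composition $G'\cup F\ \vDash\ \{\{s\}\}\cup\mathsf{1}F\ \bowtie\ \{\{s\}\cup F\}\ \vdash\ G$. Every selection of $\{\{s\}\}\cup\mathsf{1}F$ contains $\{s\}\cup F$, so the middle link is automatic, the $\mathsf{1}F$ part of the left link is $1$-reflexivity plus lowerness, and the only substantive point is the left link at $\{s\}$, i.e.\ converting the hypothesis $G'\vdash s$ into the single $\vDash$-statement $G'\cup F\vDash\{s\}$. That one step is the entire content of the sub-claim, and your proposal never produces it; until it is supplied, the proof of the cut rule is incomplete.
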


\begin{proof}
Say $\{s\}\cup F\vdash G$ and $F\vdash G\cup\{s\}$.  As $\vdash\ \subseteq\ \vDash\bullet\vdash_{1\exists}$, we have $\mathcal{F},\mathcal{G}\in\mathsf{FF}S$ such that $F\vDash\mathcal{F}\bowtie\mathcal{G}\vdash_{1\exists}G\cup\{s\}$.  Let $\mathcal{H}=\{H\in\mathcal{G}:H\vdash s\}$.  For any $H\in\mathcal{H}$,
\[F\cup H\ \vDash\ \{\{s\}\}\cup\mathsf{1}F\ \bowtie\ \{\{s\}\cup F\}\ \vdash\ G,\]
as $\vDash$ is 1-reflexive and lower, and hence $F\cup H\vdash G$, as $\vDash\bullet\vdash\ \subseteq\ \vdash$.  It follows that
\begin{equation}\label{FHG}
F\ \vDash\ \mathsf{1}F\cup\mathcal{F}\ \bowtie\ \{F\cup H:H\in\mathcal{H}\}\cup(\mathcal{G}\setminus\mathcal{H})\ \vdash\ G.
\end{equation}
Indeed, $F\vDash\mathsf{1}F\cup\mathcal{F}$ follows again from the fact that $\vDash$ is a 1-reflexive lower relation.  The middle diagonal relation follows from
\begin{align*}
(\{F\cup H:H\in\mathcal{H}\}\cup(\mathcal{G}\setminus\mathcal{H}))_\between&=\{F\cup H:H\in\mathcal{H}\}_\between\cap(\mathcal{G}\setminus\mathcal{H})_\between\\
&=(\{F\}_\between\cup\mathcal{H}_\between)\cap(\mathcal{G}\setminus\mathcal{H})_\between\\
&\subseteq\{F\}_\between\cup(\mathcal{H}_\between\cap(\mathcal{G}\setminus\mathcal{H})_\between)=\{F\}_\between\cup\mathcal{G}_\between\\
&\subseteq(\mathsf{1}F)^\subseteq\cup\mathcal{F}^\subseteq=(\mathsf{1}F\cup\mathcal{F})^\subseteq.
\end{align*}
For the last relation, note $H\vdash_{1\exists}G\cup\{s\}$ and $H\nvdash s$ implies $H\vdash_{1\exists}G$ and hence $H\vdash G$, as $\vdash$ is an upper relation.  This means $\mathcal{G}\setminus\mathcal{H}\vdash G$, while $F\cup H\vdash G$ was already noted above, completing the verification of \eqref{FHG}.  As $\vDash\bullet\vdash\ \subseteq\ \vdash$, it follows that $F\vdash G$, showing that $\vdash$ is a cut relation.
\end{proof}

We can use the above result to prove \cite[Proposition 44]{Vickers2004}, which in our terminology says that, whenever $\vdash$ is $1$-reflexive and monotone,
\[\vdash\text{ is a cut relation}\qquad\Leftrightarrow\qquad\vdash\text{ is cut-transitive}.\]
Indeed, if $\vdash$ is also cut-transitive then we can apply \autoref{DiamondCut} with $\vDash\ =\ \vdash$ to show that $\vdash$ is a cut relation.  Conversely, if $\vdash$ is a cut-relation then it must be cut-transitive, by \autoref{DiagonalTransitive}.  Below in \autoref{DiamondCover}, we will instead take $\vDash$ to be a certain $1$-reflexive relation $\Vdash$ defined from $\vdash$, even when $\vdash$ itself is not $1$-reflexive.  This could thus be viewed as a non-$1$-reflexive extension of \cite[Proposition 44]{Vickers2004}, much like \cite[Theorem 3.5]{JungKegelmannMoshier1999} but without any need for connectives or interpolants (or rather with interpolants replaced by \eqref{SuperIdempotence} below).

In order to define $\Vdash$, let us first denote the \emph{bounded} $F\in\mathsf{F}S$ by
\begin{equation}\label{BSDefinition}
\tag{Bounded Subsets}\mathsf{B}S=\mathsf{F}S^\dashv=\{F\vdash G:G\in\mathsf{F}S\}.
\end{equation}
The restriction of any relation $\vDash\ \subseteq \mathsf{F}S\times T$ to $\mathsf{B}S\times T$ will be denoted by ${}_\mathsf{B}\hspace{-3pt}\vDash$, i.e.
\[F\ {}_\mathsf{B}\hspace{-3pt}\vDash t\qquad\Leftrightarrow\qquad\mathsf{B}S\ni F\vdash t.\]
Using bounded subsets, we can now define the relation $\Vdash$.  Actually, the use $\mathsf{B}S$ rather than $\mathsf{F}S$ below in the definition of $\Vdash$ is only relevant if $F=\emptyset$ -- otherwise $H\vdash\mathsf{1}F$ already implies that $H$ is bounded.

\begin{prp}\label{uCdensity}
For any relation $\vdash$ on $\mathsf{F}S$, the relation $\Vdash$ on $\mathsf{F}S$ defined by
\begin{equation}\label{DfromC}
F\Vdash G\qquad\Leftrightarrow\qquad\forall H\in\mathsf{B}S\ (H\vdash\mathsf{1}F\ \Rightarrow\ H\vdash G)
\end{equation}
is always lower, $1$-reflexive and satisfies ${}_\mathsf{B}(\vdash_{1\exists}\bullet\Vdash)\subseteq\ \vdash$.  Moreover,
\begin{enumerate}
\item If $\vdash$ is upper then so is $\Vdash$.
\item\label{Vsubv} If $\vdash$ is lower and $1$-reflexive then ${}_\mathsf{B}\hspace{-3pt}\Vdash\ \subseteq\ \vdash$.
\item\label{vsubV} If $\vdash$ is cut-transitive then $\vdash\ \subseteq\ \Vdash$.
\item If $\vdash$ is a strong idempotent then $\Vdash$ is a Scott relation satisfying
\begin{equation}\label{VdashDualAux}
_\mathsf{B}(\vdash\bullet\Vdash)=\ \vdash.
\end{equation}
\end{enumerate}
\end{prp}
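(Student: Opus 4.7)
The plan is to unpack \eqref{DfromC} and verify each assertion in turn, reserving the bulk of the work for (4). The preliminary claims are essentially definitional: lowerness holds because enlarging $F$ to $F\cup\{s\}$ only strengthens the antecedent $H\vdash\mathsf{1}F$, and $1$-reflexivity $\{s\}\Vdash\{s\}$ holds because $H\vdash\mathsf{1}\{s\}$ unpacks to $H\vdash\{s\}$. For $\vdash_{1\exists}\bullet\Vdash\,\subseteq\,\vdash$, suppose we have $\mathcal{F},\mathcal{G}\in\mathsf{FF}S$ with $F\vdash_{1\exists}F'$ for every $F'\in\mathcal{F}$, $\mathcal{F}\bowtie\mathcal{G}$, and $G'\Vdash G$ for every $G'\in\mathcal{G}$. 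Choose $f_{F'}\in F'$ witnessing $F\vdash\{f_{F'}\}$ to assemble a selection $H=\{f_{F'}:F'\in\mathcal{F}\}\in\mathcal{F}_\between$. The diagonal relation yields some $G'\in\mathcal{G}$ with $G'\subseteq H$, so $F\vdash\mathsf{1}G'$, and then $G'\Vdash G$ delivers $F\vdash G$.

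For items (1)--(3): (1) is immediate since $H\vdash\mathsf{1}F\Rightarrow H\vdash G\Rightarrow H\vdash G\cup\{t\}$ under upperness of $\vdash$. For (2), lowerness and $1$-reflexivity of $\vdash$ give $F\vdash\mathsf{1}F$, whence any $F\Vdash G$ forces $F\vdash G$ (apply the defining implication to $H=F$). For (3), given $F\vdash G$ and any $H$ with $H\vdash\mathsf{1}F$, take $\mathcal{F}:=\mathsf{1}F$ and $\mathcal{G}:=\{F\}$; every selection of $\mathsf{1}F$ is a superset of $F$ and therefore contains $F$, so $\mathcal{F}\bowtie\mathcal{G}$, exhibiting $H\vdash\bullet\vdash G$, and cut-transitivity then yields $H\vdash G$.

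The real work lies in (4). I would first establish the key identity $\vdash\bullet\Vdash\,=\,\vdash$: the inclusion $\supseteq$ combines (3) with cut-idempotence $\vdash=\vdash\bullet\vdash$ of the strong idempotent, while $\subseteq$ uses divisibility to write $\vdash=\vdash\bullet\vdash_{1\exists}$ and then invokes associativity of cut-composition, valid here because $\vdash$ is upper, $\vdash_{1\exists}$ inherits monotonicity from $\vdash$, and $\Vdash$ is lower; combined with the already-proven $\vdash_{1\exists}\bullet\Vdash\,\subseteq\,\vdash$, this reduces $\vdash\bullet\Vdash$ to $\vdash\bullet(\vdash_{1\exists}\bullet\Vdash)\subseteq\vdash\bullet\vdash=\vdash$. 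By (1), $\Vdash$ is upper, hence monotone and $1$-reflexive, and it is automatically divisible since any $1$-reflexive upper relation satisfies $F\Vdash G\Rightarrow F\Vdash\{G\}\bowtie\mathsf{1}G\Vdash_{1\exists}G$. To obtain the cut rule itself, I would apply Proposition~\ref{DiamondCut} with $\vDash:=\Vdash$ and the upper relation also taken to be $\Vdash$; the remaining hypothesis $\Vdash\bullet\Vdash\subseteq\Vdash$, namely cut-transitivity of $\Vdash$, follows from the identity above: given $F\Vdash\bullet\Vdash G$ with witness $\mathcal{F}\bowtie\mathcal{G}$, any $H$ with $H\vdash\mathsf{1}F$ satisfies $H\vdash F'$ for every $F'\in\mathcal{F}$ by the very definition of $\Vdash$, presenting $H\vdash\bullet\Vdash G$, and hence $H\vdash G$ by $\vdash\bullet\Vdash=\vdash$.

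The main obstacle is this final chain in (4): everything pivots on establishing $\vdash\bullet\Vdash=\vdash$ before one can deduce cut-transitivity of $\Vdash$, and tracking which facet of strong idempotence (cut-idempotence, divisibility, associativity, upperness) is invoked at each step requires some care. Once the identity is in hand, however, cut-transitivity is almost immediate and the cut rule itself drops out of Proposition~\ref{DiamondCut}.
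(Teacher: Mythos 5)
Your proposal is correct, and parts (1)--(3) together with the preliminary claims match the paper's argument essentially verbatim. Where you genuinely diverge is in part (4), and your route is a legitimate alternative. The paper attacks the cut rule for $\Vdash$ head-on: given $\{p\}\cup F\Vdash G$ and $F\Vdash G\cup\{p\}$ and $H\vdash\mathsf{1}F$, it uses divisibility of $\vdash$ twice to build interpolating families $\mathcal{I}$ and $\mathcal{J}$, splits $\mathcal{J}$ according to whether $J\vdash G$ or $J\vdash p$, and applies cut-transitivity of $\vdash$ repeatedly to conclude $H\vdash G$; the identity $\vdash\bullet\Vdash\ =\ \vdash$ is then extracted at the very end via the same one-line chain $\vdash\ \subseteq\ \vdash\bullet\Vdash\ \subseteq\ \vdash\bullet\vdash_{1\exists}\bullet\Vdash\ \subseteq\ \vdash\bullet\vdash\ \subseteq\ \vdash$ that you use. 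You invert the order: you establish $\vdash\bullet\Vdash\ =\ \vdash$ first, deduce cut-transitivity of $\Vdash$ from it by the observation that $H\vdash\mathsf{1}F$ upgrades a witness for $F\Vdash\bullet\Vdash G$ to one for $H\vdash\bullet\Vdash G$, note that $\Vdash$ is a $1$-reflexive monotone divisible relation, and then let \autoref{DiamondCut} (applied with $\vDash\ =\ \vdash\ =\ \Vdash$) do the combinatorial work of producing the cut rule. Since \autoref{DiamondCut} precedes this proposition, there is no circularity, and your bookkeeping of which facet of strong idempotence is used where (divisibility, cut-transitivity, associativity with $\vdash_{1\exists}$ monotone and $\Vdash$ lower) all checks out. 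What your approach buys is a shorter, more conceptual proof of the cut rule that reuses \autoref{DiamondCut} instead of redoing a diagonal-family argument by hand; what the paper's direct proof buys is independence from \autoref{DiamondCut} and an explicit construction that makes visible exactly how divisibility of $\vdash$ enters.
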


\begin{proof}
If $E\supseteq F\Vdash G$ then, for all $H\in\mathsf{B}S$ with $H\vdash\mathsf{1}E$, certainly $H\vdash\mathsf{1}F$ and hence $H\vdash G$, showing that $E\Vdash G$, i.e. $\Vdash$ is a lower relation.  Also, $H\vdash\mathsf{1}\{s\}$ just means $H\vdash s$ and hence $s\Vdash s$, for all $s\in S$, i.e. $\Vdash$ is $1$-reflexive.  To see that $_\mathsf{B}(\vdash_{1\exists}\bullet\Vdash)\subseteq\ \vdash$, take $F\in\mathsf{B}S$ and $G\in\mathsf{F}S$ with $F\vdash_{1\exists}\bullet\Vdash G$, so we have $\mathcal{H}\in\mathsf{FF}S$ with $F\vdash_{1\exists}\mathcal{H}$ and $\mathcal{H}_\between\Vdash G$.  In particular, for every $H\in\mathcal{H}$, we have $h\in H$ with $F\vdash h$.  Thus we have $H\in\mathcal{H}_\between$ with $F\vdash\mathsf{1}H$ and $H\Vdash G$.  This implies $F\vdash G$, by the definition of $\Vdash$, showing that $_\mathsf{B}(\vdash_{1\exists}\bullet\Vdash)\subseteq\ \vdash$.

\begin{enumerate}
\item If $\vdash$ is upper and $F\Vdash G\subseteq H$ then, for all $E\in\mathsf{B}S$ with $E\vdash\mathsf{1}F$, $E\vdash G\subseteq H$ and hence $E\vdash H$, showing that $\Vdash$ is also upper.

\item If $\vdash$ is lower and $1$-reflexive and $\mathsf{B}S\ni F\Vdash G$ then $F\vdash\mathsf{1}F$ and hence $F\vdash G$, showing that ${}_\mathsf{B}\hspace{-3pt}\Vdash\ \subseteq\ \vdash$.

\item If $\vdash$ is cut-transitive and $F\vdash G$ then $H\vdash\mathsf{1}F$ implies
\[H\vdash\mathsf{1}F\bowtie\{F\}\vdash G\]
and hence $H\vdash G$.  This shows that $F\Vdash G$, which in turn shows that $\vdash\ \subseteq\ \Vdash$.

\item Say $\vdash$ is a strong idempotent.  To show that $\Vdash$ is a Scott relation, it suffices to show that $\Vdash$ is a cut relation, thanks to what we have already proved.  Accordingly, say $\{p\}\cup F\Vdash G$ and $F\Vdash G\cup\{p\}$.  Take $H\in\mathsf{B}S$ with $H\vdash\mathsf{1}F$, i.e. $H\vdash f$, for all $f\in F$.  As $\vdash\ \subseteq\ \vdash\bullet\vdash_{1\exists}$, for each $f\in F$, we have $\mathcal{I}_f\in\mathsf{FB}S$ with $H\vdash_\between\mathcal{I}_f\vdash f$.  Let $\mathcal{I}=\bigwedge_{f\in F}\mathcal{I}_f$ so $H\vdash\mathcal{I}_\between(=\bigcup_{f\in F}\mathcal{I}_{f\between})$ and $\mathcal{I}\vdash\mathsf{1}F$ and hence $\mathcal{I}\vdash G\cup\{p\}$, as $F\Vdash G\cup\{p\}$.  For each $I\in\mathcal{I}$, we then have $\mathcal{J}_I\in\mathsf{FB}S$ with $I\vdash_\between\mathcal{J}_I\vdash_{1\exists}G\cup\{p\}$.  Let $\mathcal{J}=\bigcup_{I\in\mathcal{I}}\mathcal{J}_I$ so $\mathcal{I}\vdash\mathcal{J}_\between(=\bigwedge_{I\in\mathcal{I}}\mathcal{J}_{I\between})$ and $\mathcal{J}\vdash_{1\exists}G\cup\{p\}$.  So for each $J\in\mathcal{J}$, either $J\vdash G$ or $J\vdash p$, in which case $I\cup J\vdash\mathsf{1}F\cup\{\{p\}\}$ and hence $I\cup J\vdash G$, for all $I\in\mathcal{I}$, as $\{p\}\cup F\Vdash G$.  Thus $\mathcal{I}\wedge\mathcal{J}\vdash G$.  As $H\vdash_\between\mathcal{I}\vdash\mathcal{J}_\between$, cut-transitivity yields $H\vdash\mathcal{J}_\between$ and hence $H\vdash\mathcal{I}_\between\cup\mathcal{J}_\between=(\mathcal{I}\wedge\mathcal{J})_\between$.  As $\mathcal{I}\wedge\mathcal{J}\vdash G$, cut-transitivity again yields $H\vdash G$, showing that $F\Vdash G$.  This shows that $\Vdash$ is a cut relation and hence a Scott relation.

As $\Vdash$ is $1$-reflexive, $\vdash\ \subseteq{}_\mathsf{B}(\vdash\bullet\Vdash)$.  Conversely, as $\vdash$ is a strong idempotent, $\vdash\bullet\Vdash\ \ \subseteq\ \ \vdash\bullet\vdash_{1\exists}\bullet\Vdash$.
Thus if $\mathsf{B}S\ni F\vdash\bullet\Vdash G$ then we have $\mathcal{H}\in\mathsf{FF}S$ with $F\vdash_\between\mathcal{H}\vdash_{1\exists}\bullet\Vdash G$.  If $\mathcal{H}\subseteq\mathsf{B}S$ then $F\vdash_\between\mathcal{H}\vdash G$, as ${}_\mathsf{B}(\vdash_{1\exists}\bullet\Vdash)\subseteq\ \vdash$, and hence $F\vdash G$, by cut-transitivity.  On the other hand, if $\mathcal{H}\nsubseteq\mathsf{B}S$, then the only way $\mathcal{H}\vdash_{1\exists}\bullet\Vdash G$ could hold is if $\mathcal{H}\vdash_{1\exists\forall}\emptyset\bowtie\mathcal{I}\ {}_\forall\hspace{-3pt}\Vdash G$, for some $\mathcal{I}\subseteq\mathsf{FF}S$.  But $\emptyset\bowtie\mathcal{I}$ implies $\emptyset\in\mathcal{I}$ and hence $\emptyset\Vdash G$, which means that $B\vdash G$, for all $B\in\mathsf{B}S$.  In particular, this again yields $F\vdash G$.  This shows that $_\mathsf{B}(\vdash\bullet\Vdash)\subseteq\ \vdash$, thus proving \eqref{VdashDualAux}.\qedhere
\end{enumerate}
\end{proof}

Cover relations are strong idempotents where the opposite of \eqref{VdashDualAux} also holds.

\begin{dfn}\label{CoverRelationDef}
A \emph{cover relation} is a strong idempotent $\vdash$ auxiliary to $\Vdash$.
\end{dfn}

In other words, a cover relation is a strong idempotent $\vdash$ satisfying $\Vdash\bullet\vdash\ \subseteq\ \vdash$, by \autoref{DiamondvsAuxiliarity} \eqref{DiamondToAux} and \eqref{AuxToDiamond}, as $\Vdash$ is always a $1$-reflexive lower relation.  This also means that $F\vdash G$ implies $F\Vdash\mathsf{1}F\bowtie\{F\}\vdash G$, so the reverse inclusion $\vdash\ \subseteq\ \Vdash\bullet\vdash$ is immediate.  Thus a relation $\vdash$ on $\mathsf{F}S$ is a cover relation precisely when
\[\tag{Cover Relation}\supseteq_{\mathsf{F}S}\circ\vdash\circ\subseteq_{\mathsf{F}S}\ \ =\ \ \vdash\ \ =\\  \vdash\bullet\vdash\ \ =\ \ \vdash\bullet\vdash_{1\exists}\ \ =\ \ \Vdash\bullet\vdash.\]

Note that, as long as $\vdash\ \subseteq\ \vdash\bullet\vdash$,
\[F\in\mathsf{B}S\qquad\Leftrightarrow\qquad\exists\mathcal{H}\in\mathsf{FB}S\ (F\vdash\mathcal{H}_\between).\]
Indeed, if $F\in\mathsf{B}S$ then we have $G\in\mathsf{F}S$ with $F\vdash G$ and $\vdash\ \subseteq\ \vdash\bullet\vdash$ then yields $\mathcal{H}\in\mathsf{FF}S$ with $F\vdash_\between\mathcal{H}\vdash G$ and hence $\mathcal{H}\subseteq\mathsf{B}S$.  Conversely, if $F\vdash\mathcal{H}_\between$, for some $\mathcal{H}\in\mathsf{FB}S$, then $F\in\mathsf{B}S$ -- if $\emptyset\notin\mathcal{H}$ and hence $\mathcal{H}_\between\neq\emptyset$ then this is immediate from the definition of $\mathsf{B}S$, while if $\emptyset\in\mathcal{H}\subseteq\mathsf{B}S$ then $\mathsf{F}S=\mathsf{B}S$ and, in particular, $F\in\mathsf{B}S$.

The following definition of boundedness for families of finite subsets thus agrees with the previous notion for single finite subsets in \eqref{BSDefinition}.

\begin{dfn}
We call $\mathcal{B}\subseteq\mathsf{F}S$ \emph{bounded} if $\mathcal{B}\vdash\mathcal{F}_\between$, for some $\mathcal{F}\in\mathsf{FB}S$.
\end{dfn}

\begin{prp}
If $\vdash$ is a cover relation then
\[\emptyset\in\mathsf{B}S\quad\Leftrightarrow\quad\mathsf{F}S=\mathsf{B}S\quad\Leftrightarrow\quad\mathsf{F}S\text{ is bounded}\quad\Leftrightarrow\quad\mathsf{B}S\text{ is bounded}.\]
\end{prp}

\begin{proof}
Note $\emptyset\vdash\mathcal{F}_\between$ is equivalent to $\mathsf{F}S\vdash\mathcal{F}_\between$, as $\vdash$ is a lower relation.  This immediately yields the first two equivalences.  We also immediately see that $\mathsf{B}S$ is bounded if $\mathsf{F}S$ is.  Now say that $\mathsf{B}S$ is bounded, so we have $\mathcal{F}\in\mathsf{FB}S$ with $B\vdash\mathcal{F}_\between$, for all $B\in\mathsf{B}S$.  This is exactly saying that $\emptyset\Vdash\mathcal{F}_\between$.  As $\mathcal{F}\in\mathsf{FB}S$, we have $G\in\mathsf{F}S$ with $\mathcal{F}\vdash G$.  Then $\emptyset\Vdash_\between\mathcal{F}\vdash G$ and hence $\emptyset\vdash G$, as $\Vdash\bullet\vdash\ \subseteq\ \vdash$.  But this means $\emptyset\in\mathsf{B}S$, thus proving the final equivalence.
\end{proof}

We will later see that these conditions are also equivalent to saying that the tight spectrum is compact.  The equivalence of these conditions also characterises the cover relations among Scott relations.  These can also be characterised by the agreement of $\vdash$ and $\Vdash$, in particular at $\emptyset$, the only potentially unbounded subset with respect to a Scott relation.

\begin{prp}
If $\vdash$ is a Scott relation then ${}_\mathsf{B}\hspace{-3pt}\Vdash\ =\ \vdash$ and, moreover,
\[\vdash\text{ is a cover relation}\quad\ \Leftrightarrow\quad\ \Vdash\ =\ \vdash\quad\ \Leftrightarrow\quad\ \mathsf{F}S\text{ is bounded or $\mathsf{B}S$ is unbounded}.\]
\end{prp}

\begin{proof}
Assume $\vdash$ is a Scott relation.  Then ${}_\mathsf{B}\hspace{-3pt}\Vdash\ =\ \vdash$ follows from \autoref{uCdensity} \eqref{Vsubv} and \eqref{vsubV}.  This and the fact $\vdash$ is $1$-reflexive yields $\vdash\ \subseteq\ \Vdash\ \subseteq\ \Vdash\bullet\vdash$.  If $\vdash$ is a cover relation then $\Vdash\bullet\vdash\ \subseteq\ \vdash$ too and hence $\Vdash\ =\ \vdash$.  Conversely, if $\Vdash\ =\ \vdash$ then $\vdash\ =\ \vdash\bullet\vdash\ =\ \Vdash\bullet\vdash$ and hence $\vdash$ is a cover relation.  This proves the first equivalence.

For the second, note that if $\mathsf{F}S$ is bounded then, in particular, $\mathsf{F}S=\mathsf{B}S$ so $\Vdash\ ={}_\mathsf{B}\hspace{-3pt}\Vdash\ =\ \vdash$.  On the other hand, if $\mathsf{B}S$ is unbounded then there can not be any $G\in\mathsf{F}S$ satisfying $\mathsf{B}S\vdash G$, because then $\mathsf{B}S\vdash(\mathsf{1}G)_\between$ and $\mathsf{1}G\subseteq\mathsf{1}S\subseteq\mathsf{F}S\setminus\{\emptyset\}\subseteq\mathsf{B}S$, by $1$-reflexivity.  But this is just saying that $\emptyset\not\Vdash G$, for all $G\in\mathsf{F}S$.  This yields $\Vdash\ \subseteq{}_\mathsf{B}\hspace{-3pt}\Vdash\ \subseteq\ \vdash\ \subseteq\ \Vdash$, again showing that $\Vdash\ =\ \vdash$.
\end{proof}

For example, if $\vdash$ is a Scott relation on a finite set $S$ then any bounded $F\in\mathsf{B}S$ must be bounded by $S$ itself, thanks to $\vdash$ being upper.  This is just saying that $\emptyset\Vdash S$.  Thus $\vdash$ is a cover relation precisely when $\emptyset\vdash S$ also holds -- if $\emptyset\vdash S$ then $\mathsf{F}S=\mathsf{B}S$ so $\Vdash\ =\ {}_\mathsf{B}\hspace{-3pt}\Vdash\ =\ \vdash$, while if $\emptyset\not\vdash S$ then $\Vdash\ \neq\ \vdash$ is immediate.

In general, however, cover relations need not be $1$-reflexive.  Indeed, they are precisely the right non-$1$-reflexive generalisation of Scott relations (at least those satisfying $\Vdash\ =\ \vdash$) which are needed to characterise subbases of stably locally compact spaces, as we will soon see in the next section.  Cover relations can also be characterised as certain kinds of entailments.

\begin{cor}\label{DiamondCover}
A relation $\vdash$ on $\mathsf{F}S$ is a cover relation if and only if $\vdash$ is a divisible entailment which is auxiliary to $\Vdash$, i.e. an entailment satisfying
\begin{equation}\label{SuperIdempotence}
\Vdash\bullet\vdash\ \ \subseteq\ \ \vdash\ \ \subseteq\ \ \vdash\bullet\vdash_{1\exists}.
\end{equation}
\end{cor}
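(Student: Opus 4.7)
The plan is to read \autoref{DiamondCover} as a direct repackaging of \autoref{DiamondCut}, \autoref{DiagonalTransitive} and \autoref{uCdensity}, with only the hypotheses needing to be lined up.

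For the forward direction, suppose $\vdash$ is a cover relation. By \autoref{CoverRelationDef} it is a strong idempotent satisfying $\Vdash\bullet\vdash\ \subseteq\ \vdash$, so monotonicity, divisibility and cut-transitivity are all in hand; the only missing ingredient of an entailment is the cut rule. I would extract this from \autoref{DiamondCut} applied with $\vDash\ =\ \Vdash$, since by \autoref{uCdensity} $\Vdash$ is $1$-reflexive and lower. What \autoref{DiamondCut} requires is the sandwich $\Vdash\bullet\vdash\ \subseteq\ \vdash\ \subseteq\ \Vdash\bullet\vdash_{1\exists}$: the left inclusion is precisely the auxiliarity hypothesis, while for the right inclusion I would first invoke \autoref{uCdensity}~\eqref{vsubV} to promote cut-transitivity to $\vdash\ \subseteq\ \Vdash$, then combine this with divisibility and the monotonicity of cut-composition in its first argument to chain $\vdash\ \subseteq\ \vdash\bullet\vdash_{1\exists}\ \subseteq\ \Vdash\bullet\vdash_{1\exists}$. \autoref{DiamondCut} then delivers the cut rule, so $\vdash$ is a divisible entailment and \eqref{SuperIdempotence} is just a restatement of auxiliarity and divisibility.

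The converse is pure bookkeeping: a divisible entailment satisfying \eqref{SuperIdempotence} is automatically monotone, cut-transitive by \autoref{DiagonalTransitive}, and divisible (the right half of \eqref{SuperIdempotence}), hence a strong idempotent; the left half of \eqref{SuperIdempotence} is auxiliarity to $\Vdash$, so by \autoref{CoverRelationDef} we do obtain a cover relation.

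The only genuinely delicate step, and the one I expect to be the main obstacle when writing the details, is noticing that the right half of the sandwich fed into \autoref{DiamondCut} is not literally divisibility but a strengthening of it obtained by replacing the outer $\vdash$ with $\Vdash$; this substitution is legitimate precisely because cut-transitivity embeds $\vdash$ into $\Vdash$ via \autoref{uCdensity}~\eqref{vsubV}. Everything else is a rearrangement of already-established results.
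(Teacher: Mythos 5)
Your proposal is correct and follows essentially the same route as the paper: the forward direction feeds the sandwich $\Vdash\bullet\vdash\ \subseteq\ \vdash\ \subseteq\ \vdash\bullet\vdash_{1\exists}\ \subseteq\ \Vdash\bullet\vdash_{1\exists}$ into \autoref{DiamondCut} (the last inclusion coming from $\vdash\ \subseteq\ \Vdash$ via \autoref{uCdensity}~\eqref{vsubV}), and the converse is \autoref{DiagonalTransitive} plus bookkeeping, exactly as in the paper.
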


\begin{proof}
If $\vdash$ is a cover relation then, by \autoref{uCdensity}, $\Vdash$ is a Scott relation and
\[\Vdash\bullet\vdash\ \ \subseteq\ \ \vdash\ \ \subseteq\ \ \vdash\bullet\vdash_{1\exists}\ \ \subseteq\ \ \Vdash\bullet\vdash_{1\exists}.\]
Thus $\vdash$ is a cut relation, by \autoref{DiamondCut}, and hence an entailment.

Conversely, if $\vdash$ is a divisible entailment auxiliary to $\Vdash$ then $\vdash$ is cut-transitive, by \autoref{DiagonalTransitive}, and hence a cover relation.
\end{proof}

In the next section we will show that all cover relations have a topological representation.  But before moving on, we again provide a number of natural examples.

\subsection{More Examples}\label{MoreExamples}
First we recall some notation and terminology from \cite{Goubault2013}.  Given a topological space $X$, we say $p\subseteq X$ is \emph{compactly contained} in $q\subseteq X$ if any open cover of $q$ has a finite subcover of $p$.  We denote this relation by $\Subset$ so, letting $\mathsf{O}X$ denote the open subsets of $X$, we can restate this as
\[p\Subset q\qquad\Leftrightarrow\qquad\forall S\subseteq\mathsf{O}X\ (q\subseteq\bigcup S\ \Rightarrow\ \exists F\in\mathsf{F}S\ (p\subseteq\bigcup F)).\]
In particular, $\Subset$ restricted to $\mathsf{O}X$ is just the usual way-below relation relative to $\subseteq$.

We call a topological space $X$ \emph{core compact} if
\[\tag{Core Compact}x\in p\in\mathsf{O}X\qquad\Rightarrow\qquad\exists q\in\mathsf{O}X\ (x\in q\Subset p).\]
Equivalently, $X$ is core compact iff $p=\bigcup\{q\in\mathsf{O}X:q\Subset p\}$, for all $p\in\mathsf{O}X$.  A usual, we call $X$ \emph{sober} if each irreducible closed set $C$ has a unique dense point $x$, i.e. $C=\mathrm{cl}\{x\}$.  If $X$ is sober and core compact then, for any $p,q\in\mathsf{O}X$, the Hofmann-Mislove theorem implies that
\[p\Subset q\qquad\Leftrightarrow\qquad\exists\text{ compact }k\subseteq X\ (p\subseteq k\subseteq q).\]
Among sober spaces, it follows that core compactness is equivalent to local compactness, in the sense that every neighbourhood contains a compact neighbourhood of the same point (see \cite[Theorem 8.3.10]{Goubault2013}).

Still following \cite{Goubault2013}, we call $X$ \emph{core coherent} if, for all $p,q,r\in\mathsf{O}X$,
\[\tag{Core Coherent}p\Subset q,r\qquad\Rightarrow\qquad p\Subset q\cap r.\]
A space is \emph{stably locally compact} if it is core compact, core coherent and sober.  Equivalently, by \cite[\S VI-6]{GierzHofmannKeimelLawsonMisloveScott2003}, $X$ is stably locally compact if it is core compact and \emph{ultrasober} meaning that any ultrafilter $U\subseteq\mathsf{P}X$ with a limit has a unique maximum limit $u\in X$, i.e. for all $x\in X$,
\[S_x\subseteq U\qquad\Leftrightarrow\qquad S_x\subseteq S_u,\]
where $S\subseteq\mathsf{O}X$ is any subbasis for $X$ and $S_x=\{p\in S:x\in p\}$.

We say $S\subseteq\mathsf{O}X$ is \emph{$\cap$-round} if
\[\tag{$\cap$-Round}\label{capRound}x\in p\in S\quad\Rightarrow\quad\exists F\in\mathsf{F} S \ (x\in\bigcap F\Subset p).\]
Equivalently, $S$ is $\cap$-round iff $p=\bigcup\{\bigcap F:F\in\mathsf{F} S \text{ and }\bigcap F\Subset p\}$, for all $p\in S$ (e.g. any subbasis of a core compact space is $\cap$-round).  If $X$ is core coherent then this is the same as saying that finite intersections are \emph{point-round} in that
\[\tag{Point-Round}\label{PointRound}x\in p\in S\quad\Rightarrow\quad\exists q\in S\ (x\in q\Subset p),\]
which again is equivalent to saying $p=\bigcup\{q\in S:q\Subset p\}$, for all $p\in S$.

\begin{xpl}[Topological Spaces]\label{SubsetCex}
Take $S\subseteq\mathsf{O}X$ and define $\vdash\ =\ \vdash_\Subset$ by
\begin{equation}\tag{$\Subset$-Cover}\label{SubsetC}
F\vdash G\qquad\Leftrightarrow\qquad\bigcap F\Subset\bigcup G.
\end{equation}

We claim that $\vdash$ is an entailment.  Indeed, $\vdash$ is certainly monotone.  To see that $\vdash$ is a cut relation, say $p\cap\bigcap F\Subset\bigcup G$ and $\bigcap F\Subset\bigcup G\cup p$.  Take any open cover $C$ of $\bigcup G$.  As $p\cap\bigcap F\Subset\bigcup G$, we have finite $D\subseteq C$ covering $p\cap\bigcap F$.  As $\bigcap F\Subset\bigcup G\cup p\subseteq\bigcup C\cup p$, we also have finite $E\subseteq C$ with $\bigcap F\subseteq\bigcup E\cup p$.  Now
\[\bigcap F=\bigcap F\cap(\bigcup E\cup p)\subseteq(p\cap\bigcap F)\cup\bigcup E\subseteq\bigcup D\cup\bigcup E,\]
so $D\cup E$ is a finite subcover of $\bigcap F$, showing that $\bigcap F\Subset\bigcup G$, i.e. $F\vdash G$.  This proves the claim, i.e. $\vdash$ is indeed an entailment.

If $S$ is $\cap$-round then we claim that $\vdash$ is even a strong idempotent.  To see that $\vdash\ \subseteq\ \vdash\bullet\vdash_{1\exists}$, say $\bigcap F\Subset\bigcup G$.  For each $x\in\bigcup G$, we have $g_x\in G$ with $x\in g_x$ and then \eqref{capRound} yields $F_x\in\mathsf{F} S $ with $x\in\bigcap F_x\Subset g_x$.  Then \eqref{capRound} again yields $G_x\in\mathsf{F} S $ with $x\in\bigcap G_x\Subset f$, whenever $f\in F_x$.  As $\bigcap F\Subset\bigcup G$, we have $x_1,\cdots,x_n$ with $\bigcap F\subseteq\bigcup_k\bigcap G_{x_k}$ and hence $\bigcap F\Subset\bigcup H$, for all $H\in\mathcal{F}_\between$, where $\mathcal{F}=\{F_{x_1},\cdots,F_{x_n}\}$.  Thus $F\vdash\mathcal{F}_\between$ and $\mathcal{F}\vdash_{1\exists}G$, showing that $\vdash\ \subseteq\ \vdash\bullet\vdash_{1\exists}$.

As long as $S$ also covers $X$, we claim that $\vdash$ is also auxiliary to $\Vdash$.  First we note
\begin{equation}\label{FsubG}
F\Vdash G\qquad\Rightarrow\qquad\bigcap F\subseteq\bigcup G.
\end{equation}
To see this, say $F\Vdash G$ and take $x\in\bigcap F$.  By \eqref{capRound}, we have $H\in\mathsf{F}S$ with $x\in\bigcap H\Subset f$, for all $f\in F$.  Even if $F=\emptyset$, we still have $s\in S$ with $x\in s$ so, enlarging $H$ if necessary, we may further assume that $\bigcap H\Subset s$ and hence $H\in\mathsf{B}S$.  Now $H\vdash\mathsf{1}F$ and hence $H\vdash G$, by the definition of $\Vdash$.  Thus $x\in\bigcap H\Subset\bigcup G$, by the definition of $\vdash$, showing that $\bigcap F\subseteq\bigcup G$.

Now if $F\Vdash\mathcal{F}\bowtie\mathcal{G}\vdash G$ then \eqref{FsubG} yields
\[\bigcap F\subseteq\bigcap_{H\in\mathcal{F}}\bigcup H\subseteq\bigcup_{H\in\mathcal{G}}\bigcap H\Subset\bigcup G\]
and hence $F\vdash G$.  So $\Vdash\bullet\vdash\ \subseteq\ \vdash$ and hence $\vdash$ is a cover relation.

Incidentally, note the converse of \eqref{FsubG} only holds if $X$ is core coherent. Indeed, if $X$ is core coherent and $\bigcap F\subseteq\bigcup G$ then $\mathsf{B}S\ni H\vdash\mathsf{1}F$ implies $\bigcap H\Subset\bigcap F\subseteq\bigcup G$ and hence $H\vdash G$, showing that $F\Vdash G$.
\end{xpl}

For the next two examples, the following result will be useful.  It says we can create cover relations from Scott relations and compatible idempotent relations.

\begin{prp}\label{ScottCover}
Given a relation $<$ on a set $S$ and another relation $\vDash$ on $\mathsf{F}S$, let $\vartriangleleft \ =\ \mathrel{{}_\forall(<_\exists)}$ and $\vdash\ =\ \vDash\circ\vartriangleleft $, i.e. for $F,G\in\mathsf{F}S$,
\begin{align*}
F\vartriangleleft G\qquad&\Leftrightarrow\qquad\forall f\in F\ \exists g\in G\ (f< g).\\
F\vdash G\qquad&\Leftrightarrow\qquad\exists H\in\mathsf{F}S\ (F\vDash H\vartriangleleft G).
\end{align*}
\begin{enumerate}
\item\label{ScottCover1} $\vdash$ is an entailment if $\vDash$ is an entailment and, for all $s\in S$ and $F,G\in\mathsf{F}S$,
\begin{equation}\label{1Aux}
F\vDash G\qquad\Rightarrow\qquad F\vDash(G\setminus s^>)\cup\{s\}.
\end{equation}
\item\label{ScottCover2} $\vdash$ is a strong idempotent if $\vDash$ is also $1$-reflexive and $<\ \subseteq\ \mathrel{{}_1\hspace{-3pt}\vdash}\circ\mathrel{\vartriangleleft _1}$, i.e.
\begin{equation}\label{CInterpolation}
q< r\qquad\Rightarrow\qquad\exists F\in\mathsf{F}S\ (q\vdash F< r).
\end{equation}
\item\label{ScottCover3} $\vdash$ is a cover relation if, moreover, for all $F,G\in\mathsf{F}S$,
\begin{equation}\label{Fsucc}
\forall f\in F_>\cap S^>\ (f\vDash G)\qquad\Rightarrow\qquad F\vDash G.
\end{equation}
\end{enumerate}
\end{prp}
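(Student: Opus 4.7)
The plan is to treat the three claims in sequence, leaning on the auxiliarity and $\mathsf{F}$-cut machinery established earlier in this section.

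For \eqref{ScottCover1}, monotonicity of $\vdash$ is immediate from that of $\vDash$ and $\vartriangleleft$.  For the cut rule, given $\{s\}\cup F\vdash G$ and $F\vdash G\cup\{s\}$, unpack to $\{s\}\cup F\vDash H_1\vartriangleleft G$ and $F\vDash H_2\vartriangleleft G\cup\{s\}$, and apply \eqref{1Aux} to the latter at $s$ to get $F\vDash H_2'\cup\{s\}$ where $H_2'=H_2\setminus s^>$.  Crucially $H_2'\vartriangleleft G$, since any $h\in H_2$ outside $s^>$ fails $h<s$, so its $\vartriangleleft$-witness in $G\cup\{s\}$ must lie in $G$.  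Upperness of $\vDash$ then gives $\{s\}\cup F\vDash H_1\cup H_2'$ and $F\vDash(H_1\cup H_2')\cup\{s\}$, and cutting $s$ in $\vDash$ yields $F\vDash H_1\cup H_2'\vartriangleleft G$, i.e.\ $F\vdash G$.

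For \eqref{ScottCover2}, cut-transitivity of the entailment is free from \autoref{DiagonalTransitive}, so only divisibility requires work.  Unpacking $F\vDash H\vartriangleleft G$, for each $h\in H$ pick $g_h\in G$ with $h<g_h$, use \eqref{CInterpolation} to produce $K_h\in\mathsf{F}S$ with $h\vdash K_h$ and every element of $K_h$ below $g_h$, and unpack once more to $\{h\}\vDash L_h\vartriangleleft K_h$.  Lowerness, upperness and \autoref{FTrans} applied to $F\vDash H$ together with the family $\{\{h\}\cup F\vDash L_h:h\in H\}$ then give $F\vDash\bigcup_h L_h$, so $F\vdash\bigcup_h K_h$.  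Setting $\mathcal{H}=\{\bigcup_h K_h\}$ and $\mathcal{I}=\mathsf{1}(\bigcup_h K_h)$, a direct selection computation yields $\mathcal{H}_\between=\mathcal{I}^\subseteq$, so $\mathcal{H}\bowtie\mathcal{I}$; meanwhile $1$-reflexivity of $\vDash$ and $k<g_{h(k)}$ (for the $h$ with $k\in K_h$) produce $\{k\}\vDash\{k\}\vartriangleleft\{g_{h(k)}\}$, i.e.\ $\{k\}\vdash g_{h(k)}\in G$, so $\mathcal{I}\vdash_{1\exists}G$, witnessing $F\vdash\bullet\vdash_{1\exists}G$.

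For \eqref{ScottCover3}, the strategy is to reduce $\Vdash\bullet\vdash\subseteq\vdash$ to $\Vdash\bullet\vDash\subseteq\vDash$: given $F_0\Vdash\mathcal{A}\bowtie\mathcal{B}\vdash G_0$, each $B\in\mathcal{B}$ supplies $B\vDash H_B\vartriangleleft G_0$, and upperness gives $\mathcal{B}\vDash H$ for $H=\bigcup_B H_B$, still $\vartriangleleft$-below $G_0$.  By \autoref{DiamondvsAuxiliarity} \eqref{AuxToDiamond} and lowerness of $\vDash$, it then suffices to show $\vDash$ is auxiliary to $\Vdash$.  So assume $F\Vdash H$ and $\{h\}\cup F\vDash G$ for all $h\in H$, and by \eqref{Fsucc} reduce $F\vDash G$ to proving $s\vDash G$ for each $s\in F_>$.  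The key lemma is that $s<t$ forces $s\vDash t$: $1$-reflexivity gives $\{s\}\vDash\{s\}$, and \eqref{1Aux} at $t$ collapses $\{s\}\setminus t^>$ to $\emptyset$, yielding $\{s\}\vDash\{t\}$.  Fix $s\in F_>$.  Applied to each $f\in F$, the lemma gives $s\vDash F$, so a single \autoref{FTrans} with $X=\{s,h\}$ converts $\{h\}\cup F\vDash G$ into $\{s,h\}\vDash G$ for every $h\in H$.  The lemma also yields $s\vdash\mathsf{1}F$ (via $\{s\}\vDash\{s\}\vartriangleleft\{f\}$), whence $F\Vdash H$ unpacks as $s\vDash L\vartriangleleft H$; for each $l\in L$ with witness $h_l\in H$ we have $l\vDash h_l$ again by the lemma, and cutting $h_l$ against $\{s,h_l\}\vDash G$ gives $\{s,l\}\vDash G$.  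A final \autoref{FTrans} against $s\vDash L$ then delivers $s\vDash G$.

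The main obstacle will be part \eqref{ScottCover3}: the chain of reductions via \autoref{DiamondvsAuxiliarity}, the repeated promotion of $<$-comparisons to $\vDash$-entailments through $1$-reflexivity and \eqref{1Aux}, and the careful sequencing of iterated cuts must all be orchestrated so that the hypotheses of \eqref{Fsucc} are eventually verified.
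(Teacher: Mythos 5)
Your proofs of parts \eqref{ScottCover1} and \eqref{ScottCover2} essentially coincide with the paper's: the same $H_2'=H_2\setminus s^>$ trick for the cut rule, and the same interpolate-each-$h\in H$ construction for divisibility (the paper packages the final combination as a cut-composition $F\vDash\{H\}\bowtie\mathsf{1}H\vDash J$ and invokes cut-transitivity of $\vDash$, where you apply \eqref{FCut} directly -- a cosmetic difference).  Part \eqref{ScottCover3} is where you genuinely diverge.  The paper's pivot is the short inclusion $\Vdash\ \subseteq\ \vDash$: if $F\Vdash G$ then every $f\in F_>$ satisfies $f\vdash\mathsf{1}F$ (as $<\ \subseteq\ {}_1\hspace{-3pt}\vdash_1$), hence $f\vdash G$ by the definition of $\Vdash$, hence $f\vDash G$ by the inclusion $\vdash\ \subseteq\ \vDash$ recorded at the end of part \eqref{ScottCover1}, and \eqref{Fsucc} finishes; auxiliarity of $\vdash$ to $\Vdash$ then follows by replacing $\Vdash$ and $\vdash$ with $\vDash$ throughout a chain $F\Vdash\mathcal{F}\bowtie\mathcal{G}\vdash G$ and invoking cut-transitivity of $\vDash$.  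You instead verify from scratch that $\vDash$ is auxiliary to $\Vdash$ by an iterated-cut argument and feed this into \autoref{DiamondvsAuxiliarity} \eqref{AuxToDiamond}; this is correct but considerably longer, and it never isolates the inclusions $\vdash\ \subseteq\ \vDash$ and $\Vdash\ \subseteq\ \vDash$ that make the paper's argument transparent.  One small citation slip: the substitution steps converting $\{h\}\cup F\vDash G$ into $\{s,h\}\vDash G$ (and similarly $\{s,h_l\}\vDash G$ into $\{s,l\}\vDash G$) are instances of \eqref{DualFCut} rather than \eqref{FCut}, since you are cutting away hypotheses on the left; both hold for the entailment $\vDash$, so nothing breaks.
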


\begin{proof}\ 
\begin{enumerate}
\item As $\vartriangleleft $ is an upper relation, so is $\vdash$.  If $\vDash$ is a lower relation then so is $\vdash$.  If $\vDash$ is an entailment satisfying \eqref{1Aux} then we claim that $\vdash$ is also a cut relation and hence an entailment.  Indeed, if $\{p\}\cup F\vdash G$ and $F\vdash G\cup\{p\}$ then we have $H,I\in\mathsf{F} S $ with $\{p\}\cup F\vDash H\vartriangleleft G$ and $F\vDash I\vartriangleleft G\cup\{p\}$.  Letting $J=I\setminus p^>\vartriangleleft G$, \eqref{1Aux} yields $F\vDash J\cup\{p\}$.  Then $\{p\}\cup F\vDash H\cup J$ and $F\vDash H\cup J\cup\{p\}$, as $\vDash$ is upper, and hence $F\vDash H\cup J\vartriangleleft G$, as $\vDash$ is a cut relation.  This shows that $F\vdash G$, as required.

Further note $\vdash\ \subseteq\ \vDash$.  Indeed, $F\vdash G$ means $F\vDash H\vartriangleleft G$, for some $H\in\mathsf{F}S$, and then repeated applications of \eqref{1Aux} yield $F\vDash(H\setminus G^>)\cup G=G$.

\item If $\vDash$ is also $1$-reflexive then $s< t$ implies $s\vdash t$, i.e. $<\ \subseteq{}_1\hspace{-3pt}\vdash_1$.  Now take $F,G\in\mathsf{F}S$ with $F\vdash G$ so $F\vDash H\vartriangleleft G$, for some $H\in\mathsf{F}S$.  If \eqref{CInterpolation} also holds then, for each $h\in H$, we have $I_h\in\mathsf{F}S$ with $h\vdash I_h\vartriangleleft  G$, i.e. $h\vDash J_h\vartriangleleft  I_h$, for some $J_h\in\mathsf{F} S $.  Letting $I=\bigcup_{h\in H}I_h$ and $J=\bigcup_{h\in H}J_h$, we see that $F\vDash H\bowtie\mathsf{1}H\vDash J$ and hence $F\vDash J\vartriangleleft I\vartriangleleft G$.  Thus $F\vdash I\bowtie\mathsf{1}I\vdash_{1\exists}G$, as $<\ \subseteq{}_1\hspace{-3pt}\vdash_1$.  This shows that $\vdash\ \subseteq\ \vdash\bullet\vdash_{1\exists}$ so $\vdash$ is a strong idempotent.

\item If \eqref{Fsucc} also holds, we claim $\Vdash\ \subseteq\ \vDash$.  Indeed, if $F\Vdash G$ then $f\in F_>\cap S^>$ implies $f\vdash\mathsf{1}F$ and $f\in\mathsf{B}S$, as $<\ \subseteq{}_1\hspace{-3pt}\vdash_1$, and hence $f\vdash G$, by the definition of $\Vdash$.  We already noted above that $\vdash\ \subseteq\ \vDash$ so this implies $F\vDash G$, by \eqref{Fsucc}.

Now if $F\Vdash\mathcal{F}\bowtie\mathcal{G}\vdash G$ then $F\vDash\mathcal{F}\bowtie\mathcal{G}\vDash H\vartriangleleft G$, where $H$ is the union of the $H$'s witnessing $\mathcal{G}\vdash G$, so $F\vDash H\vartriangleleft G$ and hence $F\vdash G$.  This shows that $\vdash$ is auxiliary to $\Vdash$ and hence $\vdash$ is a cover relation. \qedhere
\end{enumerate}
\end{proof}

The next example extends \autoref{OrderedSets}, which is again from \cite{BiceStarling2020HTight} (in this case, $\vdash$ corresponds to the `centred cover relation' $\widehat{\mathsf{C}}$ in \cite{BiceStarling2020HTight}).

\begin{xpl}[Ordered Sets]\label{OrderedSetsXpl}
Say we have a transitive relation $<$ on a rounded set $S$ (i.e. $S\subseteq S^<$) and again define a Scott relation $\vDash$ on $\mathsf{F}S$ as in \eqref{wedgeperpDensity}, i.e.
\[F\vDash G\qquad\Leftrightarrow\qquad F_>\cap G_\perp=\emptyset.\]
If $F\vDash G$ then $((G\setminus s^>)\cup\{s\})_\perp\subseteq G_\perp$ and hence $F\vDash(G\setminus s^>)\cup\{s\}$, for any $s\in S$, i.e. \eqref{1Aux} holds.  Thus $\vdash\ =\ \vDash\circ\vartriangleleft $ is an entailment, by \eqref{ScottCover1} above.

If \eqref{CInterpolation} also holds, for example if $<\ \subseteq\ <\circ<$, then $\vdash$ is even a cover relation.  Indeed, this is follows from \eqref{ScottCover3} above because \eqref{Fsucc} holds.  To see this, just note that if $F\not\vDash G$ then we have $s\in F_>\cap G_\perp$.  Roundedness then yields $e,f\in S$ with $e<f\in s^>\subseteq F_>\cap S^>$ so $e\in f^>\subseteq s^>\subseteq G_\perp$ and hence $f\not\vDash G$.
\end{xpl}

In a similar manner, we can also extend \autoref{veeSemilattices}.

\begin{xpl}[$\vee$-Idempotents]\label{PredomainsXpl}
Say we have an idempotent relation $<$ on a set $S$, i.e. $<\ =\ <\circ<$, and that $S$ has a $<$-minimum $0$, i.e. $S=0^<$.  In particular, $<$ is transitive so we have a weaker preorder $\leq\ \supseteq\ <$ defined by
\[p\leq q\qquad\Leftrightarrow\qquad p^>\subseteq q^>.\]
Further assume that $S$ is a $\vee$-semilattice with respect to $\leq$ and again define a Scott relation $\vDash$ on $\mathsf{F}S$ as in \eqref{veeDensity}, i.e. for all $F,G\in\mathsf{F}S$,
\[F\vDash G\qquad\Leftrightarrow\qquad\forall p,q\in S\ \Big(\forall f\in F\ (p\leq f\vee q)\ \Rightarrow\ p\leq\bigvee G\vee q\Big).\]
If $F\vDash G$ then $\bigvee G\leq\bigvee(G\setminus s^\geq)\vee s\leq\bigvee(G\setminus s^>)\vee s$
 and hence $F\vDash(G\setminus s^>)\cup\{s\}$, for any $s\in S$, i.e. \eqref{1Aux} holds.  As $<$ is idempotent, \eqref{CInterpolation} also holds and hence $\vdash\ =\ \vDash\circ\vartriangleleft$ is a strong idempotent, by \autoref{ScottCover} \eqref{ScottCover2}.
 
 If $S$ is also distributive (as a $\vee$-semilattice with respect to $\leq$) then $F\vDash G$ is equivalent to $F_\geq\leq\bigvee G$, as noted in \eqref{vDistributive}, which is equivalent to $(F_\geq)^>\subseteq(\bigvee G)^>$, by the definition of $\leq$.  If $f\vDash G$ and hence $f^>=f^{\geq>}\subseteq(\bigvee G)^>$, for all $f\in F_>\cap S^>$, then $<\ \subseteq\ <\circ<$ yields $(F_\geq)^>=(F_>\cap S^>)^>\subseteq(\bigvee G)^>$ and hence $F\vDash G$.  Thus \eqref{Fsucc} also holds so $\vdash$ is a cover relation, by \autoref{ScottCover} \eqref{ScottCover3}.
\end{xpl}

In particular, the above example encompasses the compingent lattices of Shirota and de Vries (see \cite{Shirota1952}, \cite{deVries1962} and \cite{BaayendeRijk1963}) as will as the (strong) proximity lattices considered by Smyth, Jung-S\"underhauf and van Gool (see \cite{Smyth1992}, \cite{JungSunderhauf1995} and \cite{vanGool2012}).  Indeed, the distributive semilattices arising from idempotent relations above are already considerably more general.  In \cite[\S2]{vanGool2012}, van Gool asked whether his duality could be extended even to non-distributive proximity lattices.  \autoref{veeSemilattices} shows that this is perfectly feasible and is likely just a matter of finding the right modification of \eqref{veeDensity}, an approach already hinted at in the conclusion of \cite{vanGool2012}.  We will address this question in detail in future work.

\section{Topological Representation}\label{TopologicalRepresentation}

Our goal now is to represent abstract cover relations on topological spaces as in \autoref{SubsetCex}.  The points of the space will be `tight subsets', which generalise the prime filters used in the classic Stone duality.  These were originally considered in \cite{BiceStarling2020HTight} (see \autoref{OrderedSetsXpl}) based on earlier work of Exel in \cite{Exel2008} (they also correspond to Kawai's `models', at least in the context of certain geometric theories associated to his strong continuous entailment relations -- see \cite[\S2 and (5.6)]{Kawai2020}).

\subsection{Tight Subsets}

\begin{dfn}
Given a relation $\vdash$ on $\mathsf{F}S$, we call $T\subseteq S$ \emph{tight} if, for all $G\in\mathsf{F} S $,
\[\tag{Tight}\label{Tight}\exists F\in\mathsf{F}S\ (T\supseteq F\vdash G)\qquad\Leftrightarrow\qquad T\cap G\neq\emptyset.\]
\end{dfn}

As long as $\vdash$ is an upper relation, this naturally splits into two conditions, namely
\begin{align*}
\tag{Prime}\label{Prime}T\supseteq F\vdash G\qquad&\Rightarrow\qquad T\cap G\neq\emptyset.\\
\tag{Round}\label{Round}\exists F\in\mathsf{F}S\ (T\supseteq F\vdash t)\qquad&\Leftarrow\qquad t\in T.
\end{align*}
In particular, \eqref{Prime} implies $F\not\vdash\emptyset$, for all $F\in\mathsf{F}T$.  We can also restate \eqref{Prime} by extending $\vdash$ to infinite subsets.  Specifically, given $\vdash$ on $\mathsf{F}S$, define $\vdash_\mathsf{P}$ on $\mathsf{P}S$ by
\[Q\vdash_\mathsf{P}R\qquad\Leftrightarrow\qquad\exists F\in\mathsf{F}Q\ \exists G\in\mathsf{F}R\ (F\vdash G).\]
For any $T\subseteq S$, we then see that
\[T\text{ is prime}\qquad\Leftrightarrow\qquad T\not\vdash_\mathsf{P}S\setminus T.\]
In particular, the empty set is tight iff $\emptyset\not\vdash_\mathsf{P}S$.  However, even in this case, we will only consider non-empty tight subsets as points of the spectrum (see \autoref{TheSpectrum} below).

\begin{xpl}
Say $S$ is a $\vee$-semilattice with minimum $0$ and $\vdash$ defined as in \eqref{veeDensity}.  As $\vdash$ is $1$-reflexive, \eqref{Round} trivially holds for any $T\subseteq S$.

If $S$ is a distributive lattice then, $\vdash$ can be characterised as in \eqref{MeetJoinCover}, i.e.
\[F\vdash G\qquad\Leftrightarrow\qquad\bigwedge F\leq\bigvee G\]
(again with $\bigwedge\emptyset=1$ if $S$ has a maximum $1$, otherwise $\emptyset\nvdash G$, for all $G\in\mathsf{F}S$).  In this case, take $T\subseteq S$ satisfying \eqref{Prime}.  Then $T$ is closed under finite meets as $F\vdash\bigwedge F$, for any finite $F\subseteq T$, i.e. $T$ is down-directed.  Also, $T$ is an up-set and hence a filter, as $t\vdash s$ whenever $s\geq t\in T$.  Moreover, $S\setminus T$ is closed under finite joins as $\bigvee G\vdash G$, for any finite $G\subseteq S\setminus T$.  In particular, $\bigvee\emptyset=0\vdash\emptyset$ so $0\notin T$.  Thus $T$ is a prime filter.  Conversely, if $T\subseteq S$ is a prime filter and $T\supseteq F\vdash G$ then $T\ni\bigwedge F\leq\bigvee G$ and hence $T\cap G\neq\emptyset$, showing that $T$ satisfies \eqref{Prime} and hence \eqref{Tight}.  So in this case
\[T\text{ is tight}\qquad\Leftrightarrow\qquad T\text{ is a prime filter}.\]
These prime filters are precisely those considered in the classic Stone duality.

If $S$ is a distributive $\vee$-semilattice then $\vdash$ can be characterised as in \eqref{vDistributive}, i.e.
\[F\vdash G\qquad\Leftrightarrow\qquad F_\geq\leq\bigvee G.\]
Again take $T\subseteq S$ satisfying \eqref{Prime}.  In particular, $t\in T$ whenever $t\geq F^\geq$ and $F\in\mathsf{F}T$, which is saying that $T$ is a Frink filter (i.e. the dual of a Frink ideal).  Again we see that $S\setminus T$ is closed under finite joins and is thus a `weakly prime ideal' (see \cite[Definition 1.6]{HansoulPoussart2008}).  Conversely, if $S\setminus T$ is a weakly prime ideal and $T\supseteq F\vdash G$ then $F^\geq\leq\bigvee G$ so $\bigvee G\notin T$ and hence $T\cap G\neq\emptyset$, showing that $T$ satisfies \eqref{Prime} and hence \eqref{Tight}.  So in this case
\[T\text{ is tight}\qquad\Leftrightarrow\qquad S\setminus T\text{ is a weakly prime ideal}.\]
These weakly prime ideals are precisely those used in the duality of Hansoul-Poussart and are dual to the optimal filters in the duality of Bezhanishvili-Jansana (see \cite[Remark 4.6]{BezhanishviliJansana2011}).
\end{xpl}

As in \autoref{PredomainsXpl}, say we have an idempotent relation $<$ on a set $S$ with $<$-minimum $0$ and define a preorder $\leq$ satisfying $<\circ\leq\ =\ <\ \subseteq\ \leq$ by
\[p\leq q\qquad\Leftrightarrow\qquad p^>\subseteq q^>.\]
We call $(S,<)$ a \emph{proximity lattice} if $S$ is a lattice w.r.t. $\leq$, the dual condition $\leq\circ<\ =\ <$ also holds and $<$ also respects the lattice structure, i.e.
\[\tag{Proximity}p<q\quad\text{and}\quad r<s\qquad\Rightarrow\qquad p\wedge r<q\wedge s\quad\text{and}\quad p\vee r<q\vee s.\]
These are like the proximity lattices considered in \cite{Smyth1992} except that here $<$ automatically `approximates' $\leq$, as we are taking $\leq$ to be defined from $<$ as above.  The key difference with the proximity lattices in \cite{JungSunderhauf1995} and \cite{vanGool2012} is that we do not require $S$ to have a maximum.

\begin{xpl}
Take a proximity lattice $(S,<)$ and define $\vdash$ on $\mathsf{F}S$ by
\[F\vdash G\qquad\Leftrightarrow\qquad\exists H\in\mathsf{F}(G^>)\ \Big(\bigwedge F\leq\bigvee H\Big)\]
(as long as $(S,\leq)$ is distributive, this agrees with the cover relation $\vdash\ =\ \vDash\circ\vartriangleleft$ in \autoref{PredomainsXpl}).  Now take $T\subseteq S$ satisfying \eqref{Tight}.  For any finite $G\subseteq T$, \eqref{Round} yields $F\in\mathsf{F}T$ with $F\vdash\mathsf{1}G$, i.e. for each $g\in G$ we have $H_g\in\mathsf{F}S$ with $F\vdash H_g<g$ and hence $\bigwedge F\leq\bigvee H_g<g$.  Setting $I=\{\bigvee H_g:g\in G\}\in\mathsf{F}(G^>)$, it follows that $\bigwedge F\leq\bigwedge I<\bigwedge G$.  Thus $\bigwedge F\vdash\bigwedge G$ and hence $\bigwedge G\in T$, by \eqref{Prime}, i.e. $T$ is down-directed.  Now \eqref{Round} again implies that $T$ is rounded, i.e. $T\subseteq T^<$.  Also \eqref{Prime} now reduces to
\[\tag{Proximal}\label{Proximal}T\ni t\leq\bigvee F\text{ and }F\subseteq G^>\qquad\Rightarrow\qquad T\cap G\neq\emptyset.\]
Conversely, if $T$ is rounded then certainly $T$ satisfies \eqref{Round}.  If $T$ is also down-directed and satisfies \eqref{Proximal} then $T\supseteq F\vdash G$ implies that we have $H\in\mathsf{F}S$ with $T\ni\bigwedge F\leq\bigvee H$ and $H\subseteq G^>$ and hence $T\cap G\neq\emptyset$, showing that $T$ also satisfies \eqref{Prime} and is thus tight.  This shows that
\[T\text{ is tight}\qquad\Leftrightarrow\qquad T\text{ is a rounded proximal filter}.\]
These rounded proximal filters are precisely those from \cite[Definition 5]{Smyth1992}.

When $S$ is also \emph{$\vee$-interpolative} meaning that, for all $p,q,r\in S$,
\[\tag{$\vee$-Interpolation}\label{veeInterpolation}p<q\vee r\qquad\Rightarrow\qquad\exists s<q\ \exists t<r\ (p<q\vee r)\]
(called simply `interpolation' in \cite{Smyth1986} and `join-strong' in \cite{JungSunderhauf1995} and \cite{vanGool2012}), \eqref{Proximal} above for rounded $T$ reduces to $T\cap F\neq\emptyset$ whenever $T\ni\bigvee F$, i.e. then 
\[T\text{ is tight}\qquad\Leftrightarrow\qquad T\text{ is a rounded prime filter}.\]
These rounded prime filters are precisely those considered in \cite{JungSunderhauf1995} and \cite{vanGool2012}.
\end{xpl}

Note $\vdash_\mathsf{P}$ above agrees with $\vdash$ on $\mathsf{F}S$ precisely when $\vdash$ is monotone.  In this case, we often drop the subscript in single instances of this relation, i.e. for any $Q,R\subseteq S$,
\[Q\vdash R\qquad\text{means}\qquad Q\vdash_\mathsf{P}R.\]

The following result is a generalisation of the Birkhoff-Stone prime filter theorem (further generalisating \cite[Lemma 4.7]{BezhanishviliJansana2011} and \cite[Theorem 3.2]{vanGool2012}).

\begin{thm}\label{BirkhoffStone}
For any $Q\subseteq S$, strong idempotent $\vdash$ on $\mathsf{F}S$ and round $R\subseteq S$,
\[R\not\vdash Q\qquad\Rightarrow\qquad\exists\text{ tight }T\supseteq R\ (T\cap Q=\emptyset).\]
\end{thm}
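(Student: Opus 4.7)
My plan is to construct $T$ in two stages via the auxiliary Scott relation $\Vdash$ from \autoref{uCdensity}: first produce a $\Vdash$-tight superset of $R$ by a Birkhoff-Stone-style Zorn argument, then extract its $\vdash$-rounded core, which will be the desired $\vdash$-tight set.

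For the first stage, I would upgrade the hypothesis $R\not\vdash Q$ to $R\not\Vdash Q$. If $F\Vdash G$ held for some $F\in\mathsf{F}R$ and $G\in\mathsf{F}Q$, then roundness of $R$ would supply $H\in\mathsf{F}R$ with $H\vdash f$ for every $f\in F$, and the defining property of $\Vdash$ would then force $H\vdash G$, contradicting $R\not\vdash Q$.  Zorn's lemma applied to $\{T'\supseteq R:T'\not\Vdash Q\}$---nonempty and closed under chain-unions because $\Vdash$ is finitary---then yields a maximal $T$. A standard Birkhoff-Stone argument now shows $T$ is $\Vdash$-prime, and hence $\Vdash$-tight: given $T\supseteq F\Vdash G$ with $T\cap G=\emptyset$, maximality yields $H_g\in\mathsf{F}T$ and $G_g\in\mathsf{F}Q$ with $\{g\}\cup H_g\Vdash G_g$ for each $g\in G$, and the $\mathsf{F}$-Cut rule (\autoref{FTrans} applied to the upper cut relation $\Vdash$) combines these with $F\Vdash G$ to give $F\cup\bigcup_g H_g\Vdash\bigcup_g G_g$, contradicting $T\not\Vdash Q$.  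In particular, $T\cap Q=\emptyset$.

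For the second stage, define the $\vdash$-rounded core
\[T^\circ:=\{t\in S:\exists H\in\mathsf{F}T,\ H\vdash t\}.\]
The containment $R\subseteq T^\circ$ is immediate from $\vdash$-roundness of $R$, and $T^\circ\cap Q=\emptyset$ follows from $T\not\Vdash Q$ together with $\vdash\subseteq\Vdash$. For $\vdash$-primality of $T^\circ$: given $T^\circ\supseteq F\vdash G$, combining the $T$-covers of each $f\in F$ yields $H\in\mathsf{F}T$ with $H\vdash\mathsf{1}F$, and applying the defining property of $\Vdash$ to $F\Vdash G$ upgrades this to $H\vdash G$.  Divisibility then decomposes $H\vdash G$ into some $\mathcal{H}',\mathcal{G}'\in\mathsf{FF}S$ with $H\vdash H''$ for each $H''\in\mathcal{H}'$, $\mathcal{H}'\bowtie\mathcal{G}'$, and each $G''\in\mathcal{G}'$ covering some $g\in G$; since $H\Vdash H''$, $\Vdash$-primality of $T$ forces $T\cap H''\neq\emptyset$ for each $H''$, so any $T$-transversal of $\mathcal{H}'$ contains some $G''\in\mathcal{G}'\cap\mathsf{F}T$ with $G''\vdash g\in G$, witnessing $g\in T^\circ\cap G$.

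The main obstacle is verifying $\vdash$-roundness of $T^\circ$, which requires a \emph{second} application of divisibility. Given $t\in T^\circ$ with witness $H\in\mathsf{F}T$, $H\vdash t$, the preceding argument (applied with $\{t\}$ in place of $G$) produces only some $G''\in\mathsf{F}T$ (not $\mathsf{F}T^\circ$) with $G''\vdash t$. To upgrade, apply divisibility again to each $H\vdash H''$ for $H''\in\mathcal{H}'$: a $T$-transversal of the resulting decomposition produces $\tilde G\in\mathsf{F}T$ covering some $h''\in H''$, witnessing $h''\in T^\circ\cap H''$. Thus every $H''\in\mathcal{H}'$ meets $T^\circ$, so a $T^\circ$-transversal of $\mathcal{H}'$ now delivers $G''\in\mathsf{F}T^\circ$ with $G''\vdash t$. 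This verifies roundness and completes the argument.
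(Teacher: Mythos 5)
Your proof is correct, but it takes a genuinely different route from the paper's. The paper runs Kuratowski--Zorn on the ``ideal side'': it extends $Q$ to a maximal $P$ with $R\not\vdash P$, sets $T=S\setminus P$, derives primality from the semicut property of strong idempotents (\autoref{DiamondvsAuxiliarity2}), and gets roundness from the maximality of $P$ together with a single application of divisibility. You instead run Zorn on the ``filter side,'' extending $R$ to a maximal $T$ with $T\not\Vdash Q$; since $\Vdash$ is a Scott relation for any strong idempotent (\autoref{uCdensity}), your first stage is exactly the classical Birkhoff--Stone prime filter argument via \eqref{FCut}, and it goes through as written. Your second stage --- passing to the rounded core $\mathsf{F}T^{\vdash_1}$ and verifying primality and roundness by one and two applications of divisibility respectively --- is essentially a re-derivation of \autoref{Prime->Tight}; you could shorten it by noting that a $\Vdash$-prime set is $\vdash$-prime (as $\vdash\ \subseteq\ \Vdash$ by \autoref{uCdensity}) and citing that proposition directly. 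The trade-off: your route isolates the conceptual content (reduce to the prime filter theorem for the Scott relation $\Vdash$, then round off), and you correctly identified that $T\nsubseteq T^\circ$ in general so the double divisibility step is genuinely needed; the paper's route is more economical, producing the tight set in one pass as the complement of the maximal $P$ without the detour through $\Vdash$.
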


\begin{proof}
If $R$ is round and $R\not\vdash Q$ then the Kuratowski-Zorn lemma yields a maximal extension $P\supseteq Q$ with $R\not\vdash P$.  If we had $p\in P\cap R$ then the roundness of $R$ would yield $F\in\mathsf{F} R $ with $F\vdash p$, contradicting the definition of $P$.  Thus $P\cap R=\emptyset$ and we may take $T=S\setminus P\supseteq R$.

If $T$ were not prime, this would mean $F\vdash G$, for some finite $F\subseteq T$ and $G\subseteq P$.  The maximality of $P$ would then yield finite $H\subseteq R$ and $I\subseteq P$ with $H\vdash I\cup\{f\}$, for all $f\in F$.  Then $F\vdash G\cup I$ and $H\vdash G\cup I\cup\{f\}$, for all $f\in F$, by monotonicity.  As $\vdash$ is a strong idempotent and hence a semicut relation (see \autoref{DiamondvsAuxiliarity2}), then $R\supseteq H\vdash I\cup G\subseteq P$, again contradicting the definition of $P$.  Thus $T$ is prime.

To see that $T$ is also round and hence tight, take any $t\in T$.  The maximality of $P$ yields finite $F\subseteq R$ and $G\subseteq P$ with $F\vdash G\cup\{t\}$.  As $\vdash\ \subseteq\ \vdash\bullet\vdash_{1\exists}$, we have $\mathcal{H},\mathcal{I}\in\mathsf{F}\mathsf{F}S$ with $\mathcal{H}\vdash G$, $\mathcal{I}\vdash t$ and $F\vdash(\mathcal{H}\cup\mathcal{I})_\between$.  This implies $H\nsubseteq T$, for all $H\in\mathcal{H}$ -- otherwise $T\supseteq H\vdash G\subseteq P$, contradicting the definition of $P$.  If we also had $I\nsubseteq T$, for all $I\in\mathcal{I}$, then we would have $J\in(\mathcal{H}\cup\mathcal{I})_\between$ with $T\supseteq F\vdash J\subseteq P$, contradicting the fact that $T$ is prime.  Thus we have $I\in\mathcal{I}$ with $T\supseteq I\vdash t$.
\end{proof}

The following generalisation will also be useful.

\begin{cor}\label{BirkhoffStone2}
For any $\mathcal{Q}\subseteq\mathsf{F}S$, strong idempotent $\vdash$ on $\mathsf{F}S$ and round $R\subseteq S$,
\[\forall F\in\mathsf{F}R\ \forall\mathcal{G}\in\mathsf{F}\mathcal{Q}\ \exists G\in\mathcal{G}_\between\ (F\not\vdash G)\qquad\Rightarrow\qquad\exists\text{ tight }T\supseteq R\ \forall H\in\mathcal{Q}\  (H\nsubseteq T).\]
\end{cor}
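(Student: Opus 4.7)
The plan is to reduce this to \autoref{BirkhoffStone} by producing a single set $Q \subseteq S$ that selects one element from each $H \in \mathcal{Q}$ and still satisfies $R \not\vdash Q$.  Once such $Q$ is at hand, \autoref{BirkhoffStone} delivers a tight $T \supseteq R$ with $T \cap Q = \emptyset$; since $Q$ meets every $H \in \mathcal{Q}$, this forces $H \nsubseteq T$ for every $H$, as required.

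To build $Q$, I would run a compactness argument over choice functions.  First observe that every $H \in \mathcal{Q}$ must be non-empty, as otherwise $\{H\}_\between = \emptyset$ and the hypothesis would be vacuously false.  Form the compact space $X = \prod_{H \in \mathcal{Q}} H$ (each factor finite and discrete, so Tychonoff applies), and for each $F \in \mathsf{F}R$ and $\mathcal{G} \in \mathsf{F}\mathcal{Q}$ consider the clopen set
\[C_{F,\mathcal{G}} = \{h \in X : F \not\vdash \{h(H) : H \in \mathcal{G}\}\},\]
which depends on only finitely many coordinates.  A point in $\bigcap_{F,\mathcal{G}} C_{F,\mathcal{G}}$ will yield the required selector.

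The main obstacle is verifying the finite intersection property.  Given finitely many pairs $(F_1,\mathcal{G}_1),\dots,(F_n,\mathcal{G}_n)$, I would invoke the hypothesis on $F = \bigcup_i F_i$ and $\mathcal{G} = \bigcup_i \mathcal{G}_i$ to obtain $G \in \mathcal{G}_\between$ with $F \not\vdash G$, pick $h(H) \in G \cap H$ for each $H \in \mathcal{G}$ (possible since $G$ meets every $H \in \mathcal{G}$), and extend $h$ arbitrarily to $\mathcal{Q}$.  Since $\{h(H) : H \in \mathcal{G}_i\} \subseteq G$ and $F_i \subseteq F$, the monotonicity of $\vdash$ (inherent in strong idempotence) propagates $F \not\vdash G$ to $F_i \not\vdash \{h(H) : H \in \mathcal{G}_i\}$ for each $i$, placing $h$ in every $C_{F_i,\mathcal{G}_i}$.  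Compactness then supplies some $h$ in the overall intersection.  Setting $Q = \{h(H) : H \in \mathcal{Q}\}$, every finite $G' \subseteq Q$ is of the form $\{h(H) : H \in \mathcal{G}\}$ for some $\mathcal{G} \in \mathsf{F}\mathcal{Q}$, so membership of $h$ in $C_{F,\mathcal{G}}$ gives $F \not\vdash G'$ for every $F \in \mathsf{F}R$, i.e., $R \not\vdash Q$ in the sense of $\vdash_\mathsf{P}$.  \autoref{BirkhoffStone} now finishes the proof, with $h(H) \in H \cap Q \subseteq H \setminus T$ witnessing $H \nsubseteq T$ for each $H \in \mathcal{Q}$.
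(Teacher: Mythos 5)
Your proposal is correct. Both your argument and the paper's reduce the corollary to \autoref{BirkhoffStone} by constructing a transversal $Q$ of $\mathcal{Q}$ (one point $h(H)\in H$ for each $H\in\mathcal{Q}$) with $R\not\vdash Q$, and then reading off $H\nsubseteq T$ from $T\cap Q=\emptyset$. The difference lies entirely in how the transversal is produced. The paper runs Kuratowski--Zorn inside $\mathsf{PF}S$: it extends $\mathcal{Q}$ to a maximal family $\mathcal{P}$ such that every finite subfamily admits a selection not entailed from $R$, and then uses maximality together with lowerness of $\vdash$ to show every $G\in\mathcal{P}$ contains some $g$ with $\{g\}\in\mathcal{P}$; the singletons of $\mathcal{P}$ then form $Q$. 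You instead phrase the selection problem as a finite-intersection-property statement about clopen sets in the compact product $\prod_{H\in\mathcal{Q}}H$ and invoke Tychonoff for finite discrete factors. Your route is arguably more transparent: the only structural input on $\vdash$ is monotonicity (which you correctly note is part of strong idempotence, and which also disposes of the degenerate cases $\mathcal{G}_i=\emptyset$ and $G'=\emptyset$ uniformly), whereas the paper's maximality manipulation has to recombine the witnesses $F_g\vdash(\mathcal{H}_g\cup\{\{g\}\})_\between$ by hand. The trade-off is that the paper's argument stays entirely within the order-theoretic machinery already set up for \autoref{BirkhoffStone}, while yours imports a (mild) topological compactness principle; both ultimately rest on a choice principle, so nothing is lost either way.
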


\begin{proof}
By the Kuratowski-Zorn lemma, we have a maximal extension $\mathcal{P}\subseteq\mathsf{F}S$ of $\mathcal{Q}$ such that, for all $F\in\mathsf{F}R$ and $\mathcal{G}\in\mathsf{F}\mathcal{P}$, we have $G\in\mathcal{G}_\between$ with $F\nvdash G$.  We claim that, for all $G\in\mathcal{P}$, we have $g\in G$ with $\{g\}\in\mathcal{P}$.  If not then, for every $g\in G$, maximality would yield $F_g\in\mathsf{F}S$ and $\mathcal{H}_g\in\mathsf{F}\mathcal{P}$ with $F_g\vdash(\mathcal{H}_g\cup\{\{g\}\})_\between=\mathcal{H}_{g\between}\wedge\{\{g\}\}$.  Letting $F=\bigcup_{g\in G}F_g\in\mathsf{F}R$ and $\mathcal{H}=\{G\}\cup\bigcup_{g\in G}\mathcal{H}_g\in\mathsf{F}\mathcal{P}$, it would follow that $F\vdash\mathcal{H}_\between$, contradicting the defining property of $\mathcal{P}$.  This proves the claim.

Letting $Q=\{q\in S:\{q\}\in\mathcal{P}\}$, it then follows that $Q\cap H\neq\emptyset$, for all $H\in\mathcal{Q}$, and $F\nvdash G$, for all $F\in\mathsf{F}R$ and $G\in\mathsf{F}Q$ (because $\mathsf{1}G\in\mathsf{F}\mathcal{P}$ and so $F\nvdash H$, for some $H\in(\mathsf{1}G)_\between=\{G\}^\subseteq$, and hence $F\nvdash G$, as $\vdash$ is upper), i.e. $R\nvdash Q$.  \autoref{BirkhoffStone} then yields tight $T\supseteq R$ with $T\cap Q=\emptyset$ and hence $H\nsubseteq T$, for all $H\in\mathcal{Q}$.
\end{proof}

We can also shrink any prime subset slightly to make it tight.

\begin{prp}\label{Prime->Tight}
If $\vdash$ is a strong idempotent on $\mathsf{F}S$ and $P\subseteq S$ is prime then
\[\mathsf{F}P^{\vdash_1}=\{p\in S:\exists F\in\mathsf{F} P \ (F\vdash p)\}\text{ is tight}.\]
\end{prp}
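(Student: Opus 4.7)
The plan is to verify both halves of \eqref{Tight}, namely \eqref{Prime} and \eqref{Round}, for $T=\mathsf{F}P^{\vdash_1}$; this suffices because $\vdash$ is monotone (indeed, a strong idempotent is an entailment by \autoref{DiamondCover}). The workhorse will be the following \emph{derivation lemma}: whenever $F\in\mathsf{F}P$ and $F\vdash H$ with $H\in\mathsf{F}S$, we have $H\cap T\neq\emptyset$. To prove this, apply divisibility $\vdash\ \subseteq\ \vdash\bullet\vdash_{1\exists}$ to $F\vdash H$, obtaining $\mathcal{F},\mathcal{G}\in\mathsf{FF}S$ with $F\vdash F'$ for every $F'\in\mathcal{F}$, $\mathcal{F}\bowtie\mathcal{G}$, and, for each $G'\in\mathcal{G}$, some $h\in H$ with $G'\vdash h$. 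Since $F\subseteq P$ and $P$ is prime, every $F'\in\mathcal{F}$ meets $P$ (in particular $\emptyset\notin\mathcal{F}$, since otherwise $F\vdash\emptyset$ would force $P\cap\emptyset\neq\emptyset$). Choosing one element of $P\cap F'$ from each $F'\in\mathcal{F}$ yields a selection $F''\in\mathcal{F}_\between$ with $F''\subseteq P$; then $\mathcal{F}\bowtie\mathcal{G}$ supplies $G'\in\mathcal{G}$ with $G'\subseteq F''\subseteq P$, so $G'\in\mathsf{F}P$ and $G'\vdash h$ for some $h\in H$, witnessing $h\in H\cap T$.

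With the derivation lemma in hand, \eqref{Prime} proceeds as follows: given $F^*\in\mathsf{F}T$ with $F^*\vdash G$, pick $F_f\in\mathsf{F}P$ with $F_f\vdash f$ for each $f\in F^*$ and set $F=\bigcup_{f\in F^*}F_f\in\mathsf{F}P$. By monotonicity $F\vdash f$ for every $f\in F^*$, and since $(\mathsf{1}F^*)_\between=\{F^*\}^\subseteq$ we have $\mathsf{1}F^*\bowtie\{F^*\}$; together with $F^*\vdash G$, this exhibits $F\vdash\bullet\vdash G$, whence cut-transitivity (\autoref{DiagonalTransitive}) gives $F\vdash G$, and the derivation lemma with $H=G$ yields $T\cap G\neq\emptyset$. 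For \eqref{Round}, given $t\in T$ with witness $F\in\mathsf{F}P$ satisfying $F\vdash t$, apply divisibility to $F\vdash t$ to obtain $\mathcal{F},\mathcal{G}\in\mathsf{FF}S$ with $F\vdash F'$ for every $F'\in\mathcal{F}$, $\mathcal{F}\bowtie\mathcal{G}$, and $G'\vdash t$ for every $G'\in\mathcal{G}$. The derivation lemma applied to each $F\vdash F'$ produces $F'\cap T\neq\emptyset$, so we can pick a selection $F''\in\mathcal{F}_\between$ entirely inside $T$; then $\mathcal{F}\bowtie\mathcal{G}$ furnishes $G'\in\mathcal{G}$ with $G'\subseteq F''\subseteq T$, and $G'\vdash t$ delivers the required $G'\in\mathsf{F}T$ with $G'\vdash t$.

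The main obstacle is the absence of $1$-reflexivity: we cannot assume $P\subseteq T$, so a bare application of primality of $P$ inside a divisibility witness only produces elements of $P$, not of $T$. The derivation lemma closes this gap by using a further layer of divisibility to refine any $p\in P\cap F'$ into an element of $T\cap F'$, after which the standard $\bowtie$-selection machinery handles both \eqref{Prime} and \eqref{Round} uniformly.
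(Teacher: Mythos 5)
Your proof is correct and follows essentially the same route as the paper's: two nested applications of divisibility, with the primality of $P$ used to convert selections meeting $P$ into a member of the relevant family lying entirely inside $P$ (your ``derivation lemma'' is exactly the paper's inner step, factored out and applied to both halves of tightness). The only notable variation is in your treatment of \eqref{Prime}, where you pass from $F^*\subseteq T$ back to a witness in $\mathsf{F}P$ via cut-transitivity, while the paper instead observes that $\mathsf{F}P^{\vdash_1}\subseteq P$ (immediate from the primality of $P$) and applies divisibility to $F^*\vdash G$ directly.
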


\begin{proof}
To see that $\mathsf{F}P^{\vdash_1}$ is round, say $\mathsf{F}P\ni F\vdash p$.  As $\vdash\ \subseteq\ \vdash\bullet\vdash_{1\exists}$, we have $\mathcal{H}\in\mathsf{F}\mathsf{F}S$ with $F\vdash\mathcal{H}$ and $\mathcal{H}_\between\vdash p$.  Again using $\vdash\ \subseteq\ \vdash\bullet\vdash_{1\exists}$, for all $H\in\mathcal{H}$, we have $\mathcal{G}_H\in\mathsf{FF}S$ with $F\vdash\mathcal{G}_{H\between}$ and $\mathcal{G}_H\vdash_{1\exists}H$.  As $P$ is prime, $P\cap G\neq\emptyset$, for all $H\in\mathcal{H}$ and $G\in\mathcal{G}_{H\between}$.  Thus we must have $G\subseteq P$, for some $G\in\bigwedge_{H\in\mathcal{H}}\mathcal{G}_H$, and hence $G\vdash\mathsf{1}H$, for some $H\in\mathcal{H}_\between$.  Thus $\mathsf{F}P^{\vdash_1}\supseteq H\vdash p$, showing that $\mathsf{F}P^{\vdash_1}$ is round.

To show $\mathsf{F}P^{\vdash_1}$ is prime, say $F,G\in\mathsf{F} S $ and $\mathsf{F}P^{\vdash_1}\supseteq F\vdash G$.  Again $\vdash\ \subseteq\ \vdash\bullet\vdash_{1\exists}$ yields $\mathcal{H}\in\mathsf{F}\mathsf{F}S$ with $F\vdash\mathcal{H}_\between$ and $\mathcal{H}\vdash_{1\exists}G$.  As $F\subseteq\mathsf{F}P^{\vdash_1}\subseteq P$, it again follows that $P\cap H\neq\emptyset$, for all $H\in\mathcal{H}_\between$, and hence $H\subseteq P$, for some $H\in\mathcal{H}$.  Then we have $g\in G$ with $P\supseteq H\vdash g$ so $g\in\mathsf{F}P^{\vdash_1}$, showing $\mathsf{F}P^{\vdash_1}$ is prime and hence tight.
\end{proof}

\subsection{The Spectrum}\label{TheSpectrum}

Say we are given a relation $\vdash$ on $\mathsf{F} S $.  For any $Q\subseteq S$, let
\begin{align*}
\mathsf{T}_Q&=\{T\subseteq S:T\text{ is tight and }Q\subseteq T\}.\\
\mathsf{T}^Q&=\{T\subseteq S:T\text{ is tight and }Q\cap T\neq\emptyset\}.
\end{align*}
The \emph{(tight) spectrum} is the space $\mathsf{T}^S$ with subbasis $(\mathsf{T}_p)_{p\in S}$, i.e. the space of all non-empty tight subsets in the topology generated by $(\mathsf{T}_p)_{p\in S}$.

\begin{thm}\label{TopRep}
If $\vdash$ is a strong idempotent then $\mathsf{T}^S$ is stably locally compact and
\begin{align}
\label{FDGT}F\Vdash G\qquad&\Leftrightarrow\qquad\mathsf{T}_F\subseteq\mathsf{T}^G.\\
\label{FCGT}F\vdash G\qquad&\Rightarrow\qquad\mathsf{T}_F\Subset\mathsf{T}^G.
\end{align}
If $\vdash$ is a cover relation then the converse $\Leftarrow$ also holds in \eqref{FCGT} above.
\end{thm}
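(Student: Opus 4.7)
I plan to prove the four assertions in sequence: the $\Vdash$-equivalence \eqref{FDGT}, the implication \eqref{FCGT}, its converse in the cover-relation case, and stable local compactness of $\mathsf{T}^S$.

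For \eqref{FDGT} forward, given $T \in \mathsf{T}_F$, roundness of $T$ gives for each $f \in F$ a finite $H_f \subseteq T$ with $H_f \vdash f$; setting $H := \bigcup_f H_f \in \mathsf{F}T$, lowerness yields $H \vdash \mathsf{1}F$, the hypothesis $F \Vdash G$ then forces $H \vdash G$, and primeness of $T$ produces a point of $T \cap G$.  For the reverse I argue contrapositively: an $H$ with $H \vdash \mathsf{1}F$ and $H \not\vdash G$ witnesses $F \not\Vdash G$; the $\mathsf{F}$-cut rule of \autoref{FTrans} verifies $F \cup H \not\vdash_{\mathsf{P}} G$; a Kuratowski--Zorn maximal $P_* \supseteq F \cup H$ with $P_* \not\vdash_{\mathsf{P}} G$ is prime by the standard maximality-plus-cut argument (as in the proof of \autoref{BirkhoffStone}); and \autoref{Prime->Tight} then yields a tight $T = \mathsf{F}P_*^{\vdash_1}$ containing $F$ (using $H \subseteq P_*$ as witnesses) and disjoint from $G$.

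For \eqref{FCGT}, I first establish the compact-containment version of Alexander's subbase theorem, namely $U \Subset V$ iff every cover of $V$ by subbasic opens admits a finite subcover of $U$, by the usual Zorn argument on maximal non-subcovering open families.  A subbasic cover $\mathsf{T}^G \subseteq \mathsf{T}^P$ then reduces, via \eqref{FDGT}, to producing a finite $P' \subseteq P$ with $F \Vdash P'$.  Assuming otherwise, I construct a tight $T \supseteq F$ disjoint from $P$, which contradicts $\mathsf{T}_F \subseteq \mathsf{T}^G \subseteq \mathsf{T}^P$ (the first inclusion from $\vdash \subseteq \Vdash$ and \eqref{FDGT}): take a Kuratowski--Zorn maximal $P_* \supseteq F$ with $P_* \cap P = \emptyset$ and $P_* \not\vdash_{\mathsf{P}} P$, verify primeness as before, and apply \autoref{Prime->Tight}.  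The main technical obstacle is ensuring $F \subseteq T = \mathsf{F}P_*^{\vdash_1}$: this requires $P_*$ to harbour a $\vdash$-predecessor for each $f \in F$, which I will secure by seeding the Zorn process with a witness $H \vdash \mathsf{1}F$ (supplied by the assumption $F \not\Vdash \emptyset$, i.e.\ the case $P' = \emptyset$), refined by divisibility $\vdash \subseteq \vdash \bullet \vdash_{1\exists}$ so that $H$ can be chosen disjoint from $P$ while preserving $(F \cup H) \not\vdash_{\mathsf{P}} P$.

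For the converse in the cover-relation case, the family $\{\mathsf{T}_E : E \in \mathsf{F}S,\ E \vdash g \text{ for some } g \in G\}$ covers $\mathsf{T}^G$ (by roundness of each tight $T$ meeting $G$), so compact containment furnishes a finite subcover $\mathsf{T}_F \subseteq \bigcup_{i=1}^n \mathsf{T}_{E_i}$ with $E_i \vdash g_i \in G$.  The distributive identity $\bigcup_i \mathsf{T}_{E_i} = \bigcap_{K \in \{E_i\}_\between} \mathsf{T}^K$, combined with \eqref{FDGT}, gives $F \Vdash K$ for every selection $K$, and hence $F \mathrel{\Vdash \bullet \vdash_{1\exists}} G$; since $\vdash_{1\exists} \subseteq \vdash$ by upperness and $\Vdash \bullet \vdash = \vdash$ by \autoref{CoverRelationDef}, we conclude $F \vdash G$.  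Finally, stable local compactness decomposes into sobriety (for irreducible closed $C$, the set $T_0 := \bigcup C$ is tight -- primeness uses irreducibility to find a single $T \in C$ containing any given finite $F \subseteq T_0$, and roundness transfers from a member of $C$ -- and $C = \overline{\{T_0\}}$, with uniqueness of the generic point from the $T_0$-property), core compactness (at $T \in \mathsf{T}_p$, roundness produces $F \in \mathsf{F}T$ with $F \vdash p$, whence $T \in \mathsf{T}_F \Subset \mathsf{T}_p$ by \eqref{FCGT}), and core coherence (by combining two $\vdash$-witnesses via lowerness).
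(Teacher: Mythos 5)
Your treatment of the converse of \eqref{FCGT} and your sobriety argument (via irreducible closed sets and $T_0=\bigcup C$, rather than the paper's ultrafilter route) are fine, but the two central existence claims are not established, and the second one is unfixable as stated. First, for the reverse of \eqref{FDGT} and for the seeding step $(F\cup H)\not\vdash_{\mathsf{P}}G$, you repeatedly invoke the cut rule in the form of \autoref{FTrans}. But a strong idempotent is only guaranteed to be a \emph{semicut} relation (\autoref{DiamondvsAuxiliarity2}), i.e.\ weakening happens on the right-hand side of the second premise; the genuine cut rule, which your filter-side Kuratowski--Zorn argument needs (the maximality witnesses add elements to the \emph{left} of $\vdash$), only becomes available once $\vdash$ is a cover relation (\autoref{DiamondCover}). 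This is precisely why \autoref{BirkhoffStone} maximises the complementary ideal $P\supseteq Q$ with $R\not\vdash P$ rather than the filter: that orientation is the one the semicut property supports. Your parenthetical ``as in the proof of \autoref{BirkhoffStone}'' does not describe the argument you are actually running.

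Second, and more seriously, your strategy for \eqref{FCGT} asks for a tight $T\supseteq F$ disjoint from $P$ under the sole hypothesis that no finite $P'\subseteq P$ satisfies $F\Vdash P'$. Such a $T$ need not exist: ``no finite subcover of $\mathsf{T}_F$'' does not imply ``$\mathsf{T}_F\nsubseteq\mathsf{T}^P$'' (consider $X=[0,2]$, $F=\{[0,1)\}$, $P=\{[0,x):x<1\}$ with $\vdash\ =\ \vdash_\Subset$: every $H\vdash\mathsf{1}F$ already satisfies $H\vdash_{\mathsf{P}}P$, so your Zorn process cannot even be seeded, and indeed every tight set containing $F$ meets $P$). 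The correct witness, which the paper produces in \eqref{KeyLemma}, is a tight set that merely \emph{meets} $G$ while avoiding $P$; constructing it is the real content of the theorem and requires the infinite descending chain of interpolants supplied by divisibility via \eqref{BetweenInterpolation}, assembled with K\"onig's lemma into a round set $R$ with $R\not\vdash P$, to which \autoref{BirkhoffStone} is then applied. No analogue of this interpolation-plus-compactness argument appears in your proposal, and since your proof of core compactness rests on \eqref{FCGT}, that part falls with it.
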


\begin{proof}
First we show $\mathsf{T}^S$ is ultrasober.  For any ultrafilter $\mathscr{U}\subseteq\mathsf{P}(\mathsf{T}^S)$, we claim
\[P=\bigcup_{\mathfrak{U}\in\mathscr{U}}\bigcap\mathfrak{U}\]
is prime.  To see this, say $\mathsf{F} P \ni F\vdash G$.  For each $f\in F$, we have $\mathfrak{U}_f\in\mathscr{U}$ with $f\in\bigcap\mathfrak{U}_f$, i.e. $f\in T$, for all $T\in\mathfrak{U}_f$.  As $\mathscr{U}$ is a filter, $\mathfrak{U}=\bigcap_{f\in F}\mathfrak{U}_f\in\mathscr{U}$ and $F\subseteq\bigcap\mathfrak{U}$, i.e. $F\subseteq T$, for all $T\in Y$.  As each $T\in\mathfrak{U}$ is prime, it follows that $T\cap G\neq\emptyset$ and hence $\mathfrak{U}=\bigcup_{g\in G}\mathfrak{U}_g$, where $\mathfrak{U}_g=\mathfrak{U}\cap\mathsf{T}_g=\{T\in\mathfrak{U}:g\in T\}$.  As $\mathscr{U}$ is an ultrafilter, we have $g\in G$ with $\mathfrak{U}_g\in\mathscr{U}$ and hence $g\in\bigcap\mathfrak{U}_g\subseteq P$, showing that $P$ is prime.  Thus $\mathsf{F}P^{\vdash_1}=\{p\in S:\exists F\in\mathsf{F} P \ (F\vdash p)\}$ is tight, by \autoref{Prime->Tight}.

Note that $T\in\mathsf{T}^S$ is a limit of $\mathscr{U}$ iff, for all $t\in T$, we have $\mathfrak{U}\in\mathscr{U}$ such that $\mathfrak{U}\subseteq\mathsf{T}_t$, i.e. $t\in\bigcap\mathfrak{U}$.  In other words, $\mathscr{U}\rightarrow T$ iff $T\subseteq\bigcup_{\mathfrak{U}\in\mathscr{U}}\bigcap\mathfrak{U}=P$.  But as each $T\in\mathsf{T}^S$ is round, this is equivalent to $T\subseteq \mathsf{F}P^{\vdash_1}$.  So if $\mathscr{U}$ has at least one limit in $\mathsf{T}^S$ then $\mathsf{F}P^{\vdash_1}\neq\emptyset$ is the unique maximum limit, showing that $\mathsf{T}^S$ is ultrasober.

Next we claim that, for all $F\in\mathsf{B}S$, $\mathcal{G}\in\mathsf{F}\mathsf{F}S$ and $Q\subseteq S$,
\begin{equation}\label{KeyLemma}
F\vdash\mathcal{G}\quad\text{and}\quad F\not\vdash Q\qquad\Rightarrow\qquad\bigcap_{G\in\mathcal{G}}\mathsf{T}^G\nsubseteq\mathsf{T}^Q.
\end{equation}
As $F\in\mathsf{B}S$, we can add a subset to $\mathcal{G}$ if necessary to ensure that $\mathcal{G}\neq\emptyset$ and hence $\emptyset\notin\mathcal{H}_0=\{H\in\mathcal{G}_\between:H\subseteq\bigcup\mathcal{G}\}$.  By \eqref{BetweenInterpolation}, we can then extend this to a sequence $(\mathcal{H}_n)\subseteq\mathsf{F}\mathsf{F}S$ such that, for all $n\geq0$, $F\vdash_\between\mathcal{H}_{n+1}\vdash_*\mathcal{H}_n$.  For each $n\geq0$, note $\mathcal{I}_n=\{H\in\mathcal{H}_n:H\not\vdash Q\}\neq\emptyset$ \textendash\, otherwise $F\vdash_\between\mathcal{H}_n\vdash Q$ and hence $F\vdash Q$, as $\vdash$ is cut-transitive, a contradiction.  Also $\mathcal{I}_{n+1}\ni I\vdash_{1\forall}H\in\mathcal{H}_n$ implies $H\in\mathcal{I}_n$ -- otherwise $I\vdash\mathsf{1}H\bowtie\{H\}\vdash Q$ and hence $I\vdash Q$, again by cut-transitivity, contradicting $I\in\mathcal{I}_{n+1}$.  Thus we can view $(\mathcal{I}_n)$ as finite levels of an infinite tree.  K\"onig's lemma then yields an infinite branch, i.e. we have $I_n\in\mathcal{I}_n$ with $I_{n+1}\vdash_{1\forall}I_n$, for all $n\geq0$.  Then we get round $R=\bigcup_{n\geq0}I_n\neq\emptyset$, as $\emptyset\notin\mathcal{H}_0\supseteq\mathcal{I}_0$, with $R\not\vdash Q$.  \autoref{BirkhoffStone} then yields tight $T\supseteq R$ with $T\cap Q=\emptyset$, which means that $T\in\mathsf{T}_{I_0}\setminus\mathsf{T}^Q\subseteq\bigcap_{G\in\mathcal{G}}\mathsf{T}^G\setminus\mathsf{T}^Q$ and hence $\bigcap_{G\in\mathcal{G}}\mathsf{T}^G\nsubseteq\mathsf{T}^Q$, proving the claim.

We further claim that this yields the following strengthening of \eqref{FCGT}:
\begin{equation}\label{FCG}
\mathsf{B}S\ni F\vdash\mathcal{G}\qquad\Rightarrow\qquad\mathsf{T}_F\Subset\bigcap_{G\in\mathcal{G}}\mathsf{T}^G.
\end{equation}
To see this, say $\mathsf{B}S\ni F\vdash\mathcal{G}$.  To show $\mathsf{T}_F\Subset\bigcap_{G\in\mathcal{G}}\mathsf{T}^G$, it suffices to show that every subbasic cover of $\bigcap_{G\in\mathcal{G}}\mathsf{T}^G$ has a finite subcover of $\mathsf{T}_F$ (see \cite[Exercise 5.2.11]{Goubault2013}).  Put another way, it suffices to show that if a subbasic family has no finite subset covering $\mathsf{T}_F$ then the whole family can not cover $\bigcap_{G\in\mathcal{G}}\mathsf{T}^G$.  Accordingly, take a subbasic family $(\mathsf{T}_q)_{q\in Q}$, such that $\mathsf{T}_F\nsubseteq\mathsf{T}^H$, for all finite $H\subseteq Q$.  Then $F\not\vdash Q$, as $F\vdash Q$ would mean $F\vdash H$, for some $H\in\mathsf{F} Q $, and hence $\mathsf{T}_F\subseteq\mathsf{T}^H$, by \eqref{Prime}, a contradiction.  Thus $\bigcap_{G\in\mathcal{G}}\mathsf{T}^G\nsubseteq\mathsf{T}^Q$, by \eqref{KeyLemma}, as required.

If $T\in\mathsf{T}_F$ then, as $T$ is round, we have $G\in\mathsf{B}T$ with $G\vdash\mathsf{1}F$ and hence $T\in\mathsf{T}_G\Subset T_F$, by \eqref{FCG}.  This shows that $\mathsf{T}^S$ is core compact and hence stably locally compact, as we already showed that $\mathsf{T}^S$ is ultrasober.

For \eqref{FDGT}, say $F\Vdash G$ and $T\in\mathsf{T}_F$, i.e. $F\subseteq T$.  As $T$ is round, we have $H\in\mathsf{B}T$ with $H\vdash\mathsf{1}F$ so $H\vdash G$ and hence $T\cap G\neq\emptyset$, as $T$ is prime, i.e. $T\in\mathsf{T}^G$, proving $\mathsf{T}_F\subseteq\mathsf{T}^G$.  Conversely, if $F\nVdash G$ then we have $H\in\mathsf{B}S$ with $H\vdash\mathsf{1}F$ but $H\nvdash G$ and hence $\mathsf{T}_F\nsubseteq\mathsf{T}^G$, by \eqref{KeyLemma}, thus completing the proof of \eqref{FDGT}.

For the converse of \eqref{FCGT}, assume $\vdash$ is a cover relation.  Say $F\not\vdash G$ and consider
\[G^{{}_1\!\dashv}=\{H\in\mathsf{F} S :\exists g\in G\ (H\vdash g)\}.\]
As each $T\in\mathsf{T}^G$ is round, $(\mathsf{T}_H)_{H\in G^{{}_1\!\dashv}}$ covers $\mathsf{T}^G$.  Now take any finite $\mathcal{H}\subseteq G^{{}_1\!\dashv}$, so $\mathcal{H}\vdash_{1\exists}G$  and hence $\mathcal{H}\vdash G$.  Note we must have $I\in\mathsf{B}S$ with $I\vdash\mathsf{1}F$ and $I\not\vdash J$, for some $J\in\mathcal{H}_\between$ \textendash\, otherwise $F\Vdash\mathcal{H}_\between$, by the definition of $\Vdash$, and hence $F\vdash G$, as $\vdash$ is auxiliary to $\Vdash$, a contradiction.  Then $\mathsf{T}_F\nsubseteq\mathsf{T}^J\supseteq\bigcup_{H\in\mathcal{H}}\mathsf{T}_H$, by  \eqref{KeyLemma}.  This shows that $(\mathsf{T}_H)_{H\in G^{{}_1\!\dashv}}$ has no finite subcover of $\mathsf{T}_F$ and hence $\mathsf{T}_F\not\Subset\mathsf{T}^G$, as required.
\end{proof}

So the above result says that we can represent any cover relation $\vdash$ on a set $S$ as the compact cover relation $\vdash_\Subset$ (see \autoref{SubsetCex}) on a subbasis $(\mathsf{T}_s)_{s\in S}$ of some stably locally compact space, namely the tight spectrum $\mathsf{T}^S$.  The finite subsets of $S$ then get represented as a basis $(\mathsf{T}_F)_{F\in\mathsf{F}S}$ which is also closed under finite intersections.  Specifically, these correspond unions in $\mathsf{F}S$, i.e. for all $F,G\in\mathsf{F}S$,
\[\mathsf{T}_F\cap\mathsf{T}_G=\mathsf{T}_{F\cup G}.\]
On this basis, $\subseteq$ and $\Subset$ correspond to $\Vdash_{1\forall}$ and $\vdash_{1\forall}$, thanks to \eqref{FDGT} and \eqref{FCGT}, i.e.
\begin{align*}
F\Vdash_{1\forall}G\qquad&\Leftrightarrow\qquad\mathsf{T}_F\subseteq\mathsf{T}_G\text{ and}\\
F\vdash_{1\forall}G\qquad&\Leftrightarrow\qquad\mathsf{T}_F\Subset\mathsf{T}_G,
\end{align*}
at least as long as $F\in\mathsf{B}S$ (which is automatic if $G\neq\emptyset$).  Note $(\mathsf{T}_F)_{F\in\mathsf{B}S}$ is still a basis for $\mathsf{T}^S$, one consisting entirely of sets contained in compact subsets.

Families $\mathcal{P}\subseteq\mathsf{B}S$ then correspond to open subsets of the spectrum defined by
\[\mathsf{T}_\mathcal{P}=\bigcup_{F\in\mathcal{P}}\mathsf{T}_F.\]
Finite intersections and arbitrary unions now correspond to the $\wedge$ and $\bigcup$ operations on $\mathsf{PB}S$, i.e. for any $\mathcal{Q},\mathcal{R}\subseteq\mathsf{B}S$ and any $\mathscr{P}\subseteq\mathsf{PB}S$,
\[\mathsf{T}_\mathcal{Q}\cap\mathsf{T}_\mathcal{R}=\mathsf{T}_{\mathcal{Q}\wedge\mathcal{R}}\qquad\text{and}\qquad\bigcup_{\mathcal{P}\in\mathscr{P}}\mathsf{T}_\mathcal{P}=\mathsf{T}_{\bigcup\mathscr{P}}.\]
Again thanks to \eqref{FDGT} and \eqref{FCGT}, $\subseteq$ and $\Subset$ can now be expressed in terms of $\vdash$ by
\begin{align}
\label{TQSub}\mathsf{T}_\mathcal{Q}\Subset\mathsf{T}_\mathcal{R}\qquad&\Leftrightarrow\qquad\exists\mathcal{G}\in\mathsf{F}\mathcal{R}\ (\mathcal{Q}\vdash\mathcal{G}_\between).\\
\label{TQsub}\mathsf{T}_\mathcal{Q}\subseteq\mathsf{T}_\mathcal{R}\qquad&\Leftrightarrow\qquad\mathcal{Q}{\downarrow}\subseteq\mathcal{R}{\downarrow},
\end{align}
where
\[\mathcal{Q}{\downarrow}=\mathsf{F}\mathcal{Q}^{\bowtie\circ{}_\forall\!\dashv}=\{F\in\mathsf{B}S:\exists\mathcal{G}\in\mathsf{F}\mathcal{Q}\ (F\vdash\mathcal{G}_\between)\}.\]
Indeed, open sets of $\mathsf{T}^S$ correspond to `quasi-ideals', i.e. $\mathcal{Q}\subseteq\mathsf{B}S$ with $\mathcal{Q}=\mathcal{Q}{\downarrow}$, as we show next in \autoref{QuasiIdealsOpenSubsets} below.

\subsection{Quasi-Ideals}\label{QI}

\begin{dfn}
Given a relation $\vdash$ on $\mathsf{F}S$, we call $\mathcal{Q}\subseteq\mathsf{B}S$ a \emph{quasi-ideal} if
\[\tag{Quasi-Ideal}F\in\mathcal{Q}\qquad\Leftrightarrow\qquad\exists\mathcal{G}\in\mathsf{F}\mathcal{Q}\ (F\vdash\mathcal{G}_\between).\]
We denote the set of all quasi-ideals by $\mathsf{Q}S=\{\mathcal{Q}\subseteq\mathsf{B}S:\mathcal{Q}=\mathcal{Q}{\downarrow}\}$.
\end{dfn}

The correspondence mentioned above, between quasi-ideals and open subsets of the tight spectrum, actually remains valid even when $\vdash$ is only a strong idempotent.

\begin{prp}\label{QuasiIdealsOpenSubsets}
If $\vdash$ is a strong idempotent on $\mathsf{F}S$ then $\mathcal{Q}\mapsto\mathsf{T}_\mathcal{Q}$ is an order isomorphism from the quasi-ideals $\mathsf{Q}S$ onto the open subsets of the spectrum $\mathsf{OT}^S$.
\end{prp}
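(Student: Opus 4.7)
The plan is to split the claim into three pieces: order preservation, order reflection (which already gives injectivity), and surjectivity. The first is immediate from $\mathsf{T}_\mathcal{Q} = \bigcup_{F \in \mathcal{Q}} \mathsf{T}_F$. For the second, if $\mathcal{Q}, \mathcal{R} \in \mathsf{QF}S$ satisfy $\mathsf{T}_\mathcal{Q} \subseteq \mathsf{T}_\mathcal{R}$, then \eqref{TQsub} yields $\mathcal{Q}\hspace{-2pt}\downarrow \subseteq \mathcal{R}\hspace{-2pt}\downarrow$, which collapses to $\mathcal{Q} \subseteq \mathcal{R}$ by the defining quasi-ideal identity. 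So all the real work is in surjectivity.

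For surjectivity, fix open $U \in \mathsf{OT}^S$. Since $(\mathsf{T}_s)_{s \in S}$ is a subbasis and $\mathsf{T}_F = \bigcap_{f \in F} \mathsf{T}_f$ for every $F \in \mathsf{F}S$, the family $(\mathsf{T}_F)_{F \in \mathsf{F}S}$ is a basis, so $U = \mathsf{T}_\mathcal{P}$ for some $\mathcal{P} \subseteq \mathsf{F}S$. I propose $\mathcal{Q} = \mathcal{P}\hspace{-2pt}\downarrow$ as the representing quasi-ideal; applying \eqref{TQsub} again, both $\mathsf{T}_\mathcal{Q} = U$ and the quasi-ideal condition $\mathcal{Q} = \mathcal{Q}\hspace{-2pt}\downarrow$ reduce to the single idempotence identity $(\mathcal{P}\hspace{-2pt}\downarrow)\hspace{-2pt}\downarrow = \mathcal{P}\hspace{-2pt}\downarrow$.

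For $(\mathcal{P}\hspace{-2pt}\downarrow)\hspace{-2pt}\downarrow \subseteq \mathcal{P}\hspace{-2pt}\downarrow$, given $F \in (\mathcal{P}\hspace{-2pt}\downarrow)\hspace{-2pt}\downarrow$, I would unpack via $\mathcal{G} \in \mathsf{F}(\mathcal{P}\hspace{-2pt}\downarrow)$ with $F \vdash \mathcal{G}_\between$ and, for each $G \in \mathcal{G}$, via $\mathcal{H}_G \in \mathsf{F}\mathcal{P}$ with $G \vdash \mathcal{H}_{G\between}$. Setting $\mathcal{H} = \bigcup_{G \in \mathcal{G}} \mathcal{H}_G \in \mathsf{F}\mathcal{P}$, the selection identity $\bigl(\bigcup_{G \in \mathcal{G}} \mathcal{H}_G\bigr)_\between = \bigcap_{G \in \mathcal{G}} \mathcal{H}_{G\between}$ recorded in \S\ref{Composition} forces every $K \in \mathcal{H}_\between$ into every $\mathcal{H}_{G\between}$, hence $G \vdash K$ for all $G \in \mathcal{G}$, i.e.\ $\mathcal{G} \mathrel{{}_\forall\vdash_\between} \mathcal{H}$. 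Since $\vdash$ is cut-transitive (built into strong idempotency), the relation ${}_\forall\vdash_\between$ is transitive on $\mathsf{FF}S$, and composing with $\{F\} \mathrel{{}_\forall\vdash_\between} \mathcal{G}$ yields $\{F\} \mathrel{{}_\forall\vdash_\between} \mathcal{H}$, unpacking to $F \vdash \mathcal{H}_\between$. Thus $F \in \mathcal{P}\hspace{-2pt}\downarrow$.

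For the reverse $\mathcal{P}\hspace{-2pt}\downarrow \subseteq (\mathcal{P}\hspace{-2pt}\downarrow)\hspace{-2pt}\downarrow$, divisibility provides \eqref{BetweenInterpolation}: from $F \vdash_\between \mathcal{G}$ with $\mathcal{G} \in \mathsf{F}\mathcal{P}$ we extract $\mathcal{H} \in \mathsf{FF}S$ with $F \vdash_\between \mathcal{H}$ and $\mathcal{H} \vdash_* \mathcal{G}$, so each $H \in \mathcal{H}$ admits some $G \in \mathcal{G} \subseteq \mathcal{P}$ with $H \vdash g$ for every $g \in G$. For any $G' \in \{G\}_\between$ pick $g \in G \cap G'$; upperness of $\vdash$ then upgrades $H \vdash g$ to $H \vdash G'$, showing $H \in \mathcal{P}\hspace{-2pt}\downarrow$, so $\mathcal{H} \in \mathsf{F}(\mathcal{P}\hspace{-2pt}\downarrow)$ witnesses $F \in (\mathcal{P}\hspace{-2pt}\downarrow)\hspace{-2pt}\downarrow$. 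The main obstacle I anticipate is the first inclusion: packaging the individual witnesses $\mathcal{H}_G$ into a single finite $\mathcal{H}$ whose selection structure precisely aligns with the existing $\mathcal{G}_\between$ so that cut-transitivity, in its ${}_\forall\vdash_\between$ formulation, performs the chaining in one step.
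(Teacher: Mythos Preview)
Your order-reflection step is circular. You invoke \eqref{TQsub} to pass from $\mathsf{T}_\mathcal{Q}\subseteq\mathsf{T}_\mathcal{R}$ to $\mathcal{Q}\hspace{-2pt}\downarrow\subseteq\mathcal{R}\hspace{-2pt}\downarrow$, but \eqref{TQsub} is only derived in the preceding informal discussion for \emph{cover relations}, using the converse direction of \eqref{FCGT}, which \autoref{TopRep} explicitly states fails for mere strong idempotents. The proposition you are proving is precisely the assertion that this correspondence still holds when $\vdash$ is only a strong idempotent, so the implication $\mathsf{T}_\mathcal{Q}\subseteq\mathsf{T}_\mathcal{R}\Rightarrow\mathcal{Q}\subseteq\mathcal{R}$ for quasi-ideals is the content to be established, not something you may cite. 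You have also misidentified where the difficulty lies: you call surjectivity ``all the real work'', but that part is routine (and your idempotence argument for $\mathcal{P}\hspace{-2pt}\downarrow$ is fine, indeed more explicit than the paper's one-line appeal to $\vdash=\vdash\bullet\vdash$).

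The actual work is producing, from $\mathcal{P}\nsubseteq\mathcal{Q}$, a tight $T\in\mathsf{T}_\mathcal{P}\setminus\mathsf{T}_\mathcal{Q}$. The paper does this by taking $F\in\mathcal{P}\setminus\mathcal{Q}$, using the quasi-ideal property of $\mathcal{P}$ and \eqref{BetweenInterpolation} to build a descending sequence $(\mathcal{H}_n)\subseteq\mathsf{FF}S$ with $F\vdash\mathcal{H}_{n\between}$ and $\mathcal{H}_{n+1}\vdash_*\mathcal{H}_n$, noting that each $\mathcal{H}_n\setminus\mathcal{Q}$ is non-empty (else $F\in\mathcal{Q}$), applying K\"onig's lemma to extract an infinite branch $(I_n)$ giving a round $R=\bigcup_n I_n$, and then invoking \autoref{BirkhoffStone2} to obtain the required tight $T\supseteq R$ avoiding every element of $\mathcal{Q}$. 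None of this machinery appears in your proposal; without it, order reflection for strong idempotents is simply unproved.
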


\begin{proof}
As noted above, all opens of $\mathsf{T}^S$ are of the form $\mathsf{T}_\mathcal{P}$, for some $\mathcal{P}\subseteq\mathsf{B}S$.  Now note $\mathsf{T}_\mathcal{P}\subseteq\mathsf{T}_{\mathcal{P}\hspace{-1pt}\downarrow}$, as each $T\in\mathsf{T}^S$ is round, and $\mathsf{T}_\mathcal{P}\supseteq\mathsf{T}_{\mathcal{P}\hspace{-1pt}\downarrow}$, as each $T\in\mathsf{T}^S$ is prime.  As $\vdash\ =\ \vdash\bullet\vdash$, $\mathcal{P}\hspace{-3pt}\downarrow$ is a quasi-ideal.  This shows $\mathcal{Q}\mapsto\mathsf{T}_\mathcal{Q}$ maps $\mathsf{Q}S$ onto $\mathsf{OT}^S$.

If $\mathcal{P}\subseteq\mathcal{Q}$ then certainly $\mathsf{T}_\mathcal{P}\subseteq\mathsf{T}_\mathcal{Q}$.  Conversely, say we have $\mathcal{P},\mathcal{Q}\in\mathsf{Q}S$ with $\mathcal{P}\nsubseteq\mathcal{Q}$.  Take any $F\in\mathcal{P}\setminus\mathcal{Q}$.  As $\mathcal{P}$ is a quasi-ideal, we have $\mathcal{G}\in\mathsf{F}\mathcal{P}$ with $F\vdash\mathcal{G}_\between$.  We can further assume that $\emptyset\notin\mathcal{G}$.  Indeed, if $\emptyset\in\mathcal{G}\subseteq\mathcal{P}\subseteq\mathsf{B}S$ then $\mathcal{P}=\mathsf{B}S=\mathsf{F}S$, in which case we replace $\mathcal{G}$ with $\mathsf{1}H$, for any $H\in\mathsf{F}S$ with $F\vdash H$.

Now argue like in the proof of \eqref{KeyLemma}.  Specifically, by \eqref{BetweenInterpolation}, we have $(\mathcal{H}_n)\subseteq\mathsf{FF}S$ such that $\emptyset\notin\mathcal{H}_0=\mathcal{G}$ and $F\vdash_\between\mathcal{H}_{n+1}\vdash_*\mathcal{H}_n$, for all $n\geq0$.  If we had $\mathcal{H}_n\subseteq\mathcal{Q}$, for some $n\geq0$, then $F\in\mathcal{Q}$, as $\mathcal{Q}$ is a quasi-ideal, contradicting our choice of $F$.  Thus we can view $(\mathcal{H}_n\setminus\mathcal{Q})_{n\in\mathbb{N}}$ as the finite levels of an infinite tree.  K\"onig's lemma then yields an infinite branch, i.e. we have $I_n\in\mathcal{H}_n\setminus\mathcal{Q}$ with $I_{n+1}\vdash_{1\forall}I_n$, for all $n\geq0$.  Then we get round $R=\bigcup_{n\geq0}I_n\neq\emptyset$ such that, for all $G\in\mathsf{F}R$ and $\mathcal{J}\in\mathsf{F}\mathcal{Q}$, we have $J\in\mathcal{J}_\between$ with $G\nvdash J$ -- otherwise we could take $I_n\in\mathcal{H}_n\setminus\mathcal{Q}$ with $I_n\vdash\mathsf{1}G\bowtie\{G\}\vdash\mathcal{J}_\between$ and hence $I_n\in\mathcal{Q}$, as $\mathcal{Q}$ is a quasi-ideal, a contradiction.  \autoref{BirkhoffStone2} then yields tight $T\supseteq R$ with $H\nsubseteq T$, for all $H\in\mathcal{Q}$, which means that $T\in\mathsf{T}_{I_0}\setminus\mathsf{T}_\mathcal{Q}\subseteq\mathsf{T}_\mathcal{P}\setminus\mathsf{T}_\mathcal{Q}$ and hence $\mathsf{T}_\mathcal{P}\nsubseteq\mathsf{T}_\mathcal{Q}$.  This shows that $\mathcal{Q}\mapsto\mathsf{T}_\mathcal{Q}$ is indeed an order isomorphism.
\end{proof}

In particular, when $\vdash$ is a strong idempotent, the quasi-ideals form an arithmetic lattice, i.e. a continuous lattice where finite meets distribute over arbitrary joins and where the way-below relation $\ll$ respects pairwise meets, i.e.
\[p\ll q,r\qquad\Rightarrow\qquad p\ll q\wedge r.\]
This even remains valid for more general monotone cut-idempotents.  When $\vdash$ is also a strong idempotent or cover relation, we can also rephrase and reprove \autoref{TopRep} in more frame theoretic terms as a result about quasi-ideals.  In this context, subbases correspond to `generators' -- within a frame $L$, a subset $S$ is said to \emph{generate} $L$ when every element of $L$ can be obtained as some (potentially infinite) join of finite meets of elements in $S$, i.e.
\[\tag{Generating Subset}L=\{\bigvee_{F\in\mathcal{Q}}\bigwedge F:\mathcal{Q}\subseteq\mathsf{F}S\}.\]
While we are primarily interested concrete topological spaces, this frame theoretic approach is more in line with the original work of Vickers.  Indeed, the following result is partly analogous to \cite[Proposition 40]{Vickers2004}.

Recall that the sets $G^\dashv=\mathsf{1}G{\downarrow}$ and $g^\dashv=\{g\}^\dashv=\{\{g\}\}{\downarrow}$ appearing below are quasi-ideals, for all $G\in\mathsf{F}S$ and $g\in S$.

\begin{thm}\label{MCI}
If $\vdash$ is a monotone cut-idempotent on $\mathsf{F}S$ then the quasi-ideals $\mathsf{Q}S$ form an arithmetic lattice such that, for all $\mathcal{P},\mathcal{Q}\in\mathsf{Q}S$,
\begin{equation}\label{WayBelow}
\mathcal{P}\ll\mathcal{Q}\qquad\Leftrightarrow\qquad\exists\mathcal{G}\in\mathsf{F}\mathcal{Q}\ (\mathcal{P}\vdash\mathcal{G}_\between).
\end{equation}
Moreover, $\vdash$ is divisible and thus a strong idempotent precisely when, for all $G\in\mathsf{F}S$,
\begin{equation}\label{Gdashv}
G^\dashv=\bigvee_{g\in G}g^\dashv.
\end{equation}
Then $\mathsf{Q}S$ is generated by the principal quasi-ideals $(s^\dashv)_{s\in S}$ and, for all $F,G\in\mathsf{F}S$,
\begin{equation}\label{Vdashsubseteq}
F\Vdash G\qquad\Leftrightarrow\qquad\bigwedge_{f\in F}f^\dashv\subseteq\bigvee_{g\in G}g^\dashv.
\end{equation}
Also $\vdash$ is auxiliary to $\Vdash$ and thus a cover relation precisely when, for all $F,G\in\mathsf{F}S$,
\begin{equation}\label{vdashll}
F\vdash G\qquad\Leftrightarrow\qquad\bigwedge_{f\in F}f^\dashv\ll\bigvee_{g\in G}g^\dashv.
\end{equation}
\end{thm}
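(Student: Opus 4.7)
The plan is to build $\mathsf{QF}S$ as a complete lattice, extract the way-below formula \eqref{WayBelow}, verify stable continuity and distributivity, and then refine to the divisibility and cover-relation cases in turn.

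First I would observe that $\mathcal{P}\mapsto\mathcal{P}\hspace{-2pt}\downarrow$ is a closure operator on $\mathsf{PF}S$: idempotence is the interesting step and unfolds to cut-idempotence, since if $\mathcal{H}\in\mathsf{F}(\mathcal{P}\hspace{-2pt}\downarrow)$ with per-element witnesses $\mathcal{G}_H\in\mathsf{F}\mathcal{P}$ satisfying $H\vdash\mathcal{G}_{H\between}$ and $F\vdash\mathcal{H}_\between$, then setting $\mathcal{G}=\bigcup_H\mathcal{G}_H\in\mathsf{F}\mathcal{P}$ one has $H\vdash\mathcal{G}_\between$ for every $H\in\mathcal{H}$ (since $\mathcal{G}_\between\subseteq\mathcal{G}_{H\between}$) and then $F\vdash\bullet\vdash K$ for each $K\in\mathcal{G}_\between$ with witnesses $(\mathcal{H}_\between,\mathcal{H})$, giving $F\vdash\mathcal{G}_\between$ via $\vdash\bullet\vdash\subseteq\vdash$. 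The closed sets $\mathsf{QF}S$ therefore form a complete lattice; a direct calculation using the duality $(\mathcal{G}\wedge\mathcal{K})_\between=\mathcal{G}_\between\cap\mathcal{K}_\between$ identifies $\mathcal{Q}\cap\mathcal{R}$ with $(\mathcal{Q}\wedge\mathcal{R})\hspace{-2pt}\downarrow$, so lattice meets are set-theoretic intersections and joins are $(\bigcup_i\mathcal{Q}_i)\hspace{-2pt}\downarrow$. Note also that each $\mathcal{Q}\in\mathsf{QF}S$ is upward closed under $\subseteq$ because $\vdash$ is lower.

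For \eqref{WayBelow} the key decomposition is $\mathcal{R}=\bigvee\{\mathcal{G}\hspace{-2pt}\downarrow:\mathcal{G}\in\mathsf{F}\mathcal{R}\}$, which is directed via $\mathcal{G}\cup\mathcal{G}'$. The $(\Leftarrow)$ direction is then clear: if $\mathcal{Q}\vdash\mathcal{G}_\between$ with $\mathcal{G}\in\mathsf{F}\mathcal{R}$, then any directed family $(\mathcal{R}_i)$ with $\mathcal{R}\subseteq\bigvee\mathcal{R}_i$ absorbs the finite set $\mathcal{G}$ into some single $\mathcal{R}_{i^*}$, and then $\mathcal{G}\in\mathsf{F}\mathcal{R}_{i^*}$ together with $\mathcal{Q}\vdash\mathcal{G}_\between$ puts $\mathcal{Q}\subseteq\mathcal{R}_{i^*}$. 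Conversely $\mathcal{Q}\ll\mathcal{R}$ forces $\mathcal{Q}\subseteq\mathcal{G}\hspace{-2pt}\downarrow$ for some $\mathcal{G}\in\mathsf{F}\mathcal{R}$, and the inclusion $\mathcal{G}_\between\subseteq\mathcal{G}'_\between$ for each $\mathcal{G}'\in\mathsf{F}\mathcal{G}$ upgrades this to $\mathcal{Q}\vdash\mathcal{G}_\between$. Continuity of $\mathsf{QF}S$ is read off the same decomposition, stability from the fact that $\mathcal{Q}\vdash\mathcal{G}_\between$ and $\mathcal{Q}\vdash\mathcal{G}'_\between$ imply $\mathcal{Q}\vdash(\mathcal{G}\wedge\mathcal{G}')_\between$ with $\mathcal{G}\wedge\mathcal{G}'\in\mathsf{F}(\mathcal{R}\wedge\mathcal{R}')$, and frame distributivity from a direct computation using the upward $\subseteq$-closure of quasi-ideals.

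For the divisibility refinement, unfolding $\bigvee_{g\in G}g^\dashv=(\bigcup_{g\in G}g^\dashv)\hspace{-2pt}\downarrow$ makes $F\in\bigvee_g g^\dashv$ equivalent to $F\vdash\mathcal{H}_\between$ for some $\mathcal{H}\in\mathsf{FF}S$ with $\mathcal{H}\vdash_{1\exists}G$, i.e.~to $F\in\vdash\bullet\vdash_{1\exists}G$; the reverse inclusion $\bigvee_g g^\dashv\subseteq G^\dashv$ is automatic from upperness, so \eqref{Gdashv} is literally divisibility. Under divisibility I would then prove the generating identity $\mathcal{Q}=\bigvee_{F\in\mathcal{Q}}\bigwedge_{f\in F}f^\dashv$: the $\supseteq$ direction uses that $H\vdash\mathsf{1}F$ with $F\in\mathcal{Q}$ forces $H\vdash K$ for every $K\in\{F\}_\between$ via upperness and hence places $H$ back in $\mathcal{Q}$; the $\subseteq$ direction refines a witness $\mathcal{G}_0\in\mathsf{F}\mathcal{Q}$ of $F\in\mathcal{Q}$, through \eqref{BetweenInterpolation}, into $\mathcal{G}_1\in\mathsf{FF}S$ with $F\vdash\mathcal{G}_{1\between}$ and $\mathcal{G}_1\vdash_*\mathcal{G}_0$, so each $G_1\in\mathcal{G}_1$ satisfies $G_1\vdash\mathsf{1}G_0$ for some $G_0\in\mathcal{G}_0\subseteq\mathcal{Q}$ and thus $\mathcal{G}_1$ sits inside the right-hand join. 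Then \eqref{Vdashsubseteq} is a one-line unfolding combining meet-as-intersection, $G^\dashv=\bigvee_g g^\dashv$, and the defining clause of $\Vdash$.

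Finally, for \eqref{vdashll} the forward direction for cover relations uses divisibility to decompose $F\vdash G$ into a witness $\mathcal{G}'\subseteq G^\dashv$ with $F\vdash\mathcal{G}'_\between$, and the inclusion $\vdash\subseteq\Vdash$ from \autoref{uCdensity} upgrades $F\vdash\mathcal{G}'_\between$ to $F\Vdash\mathcal{G}'_\between$, so \eqref{Vdashsubseteq} together with \eqref{WayBelow} delivers $\bigwedge_f f^\dashv\ll\bigvee_g g^\dashv$. The converse is the main obstacle: unpacking $F\Vdash\bullet\vdash G_0$ as $F\Vdash\mathcal{F}\bowtie\mathcal{G}\vdash G_0$, \eqref{Vdashsubseteq} yields $\bigwedge_f f^\dashv\subseteq\bigwedge_{F'\in\mathcal{F}}\bigvee_{f'\in F'}f'^\dashv$, which frame distributivity rewrites as a finite join $\bigvee_{K\in\mathcal{F}_\between}\bigwedge_{k\in K}k^\dashv$ (restricting to choice-function selections, since enlarging $K$ only shrinks the meet). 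Since $\mathcal{F}\bowtie\mathcal{G}$ forces each such $K$ to contain some $G'\in\mathcal{G}$ with $G'\vdash G_0$, each summand is contained in $\bigwedge_{g'\in G'}g'^\dashv$, which is $\ll\bigvee_g g^\dashv$ by the hypothesis \eqref{vdashll}, and stability of $\ll$ under finite joins of below-elements delivers $\bigwedge_f f^\dashv\ll\bigvee_g g^\dashv$, i.e.~$F\vdash G_0$. The delicate book-keeping lies precisely in cutting the join over the possibly-infinite $\mathcal{F}_\between$ down to a finite subfamily so that stability can be legitimately invoked.
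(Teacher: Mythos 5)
Most of your proposal tracks the paper's own argument: the closure-operator construction of $\mathsf{QF}S$, meets as intersections via $\wedge$, the directed decomposition $\mathcal{R}=\bigvee_{\mathcal{G}\in\mathsf{F}\mathcal{R}}\mathcal{G}\hspace{-2pt}\downarrow$ giving \eqref{WayBelow}, and the identification of \eqref{Gdashv} with $\vdash\ \subseteq\ \vdash\bullet\vdash_{1\exists}$ are all exactly the paper's steps. (One slip: the duality you quote should read $(\mathcal{G}\wedge\mathcal{K})_\between=\mathcal{G}_\between\cup\mathcal{K}_\between$; the intersection belongs to $(\mathcal{G}\cup\mathcal{K})_\between$. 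The surrounding computations are unaffected.) Your route to the generating identity, pushing a witness $\mathcal{G}_0\in\mathsf{F}\mathcal{Q}$ through \eqref{BetweenInterpolation} to some $\mathcal{G}_1$ with $\mathcal{G}_1\vdash_*\mathcal{G}_0$, is a legitimate variant of the paper's proof of \eqref{FunionG}, and \eqref{Vdashsubseteq} does unfold as you say.

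The gap is in the final claim. The statement asserts an equivalence between auxiliarity to $\Vdash$ and the validity of the biconditional \eqref{vdashll} for all $F,G$. You prove (i) that $F\vdash G$ implies $\bigwedge_{f\in F}f^\dashv\ll\bigvee_{g\in G}g^\dashv$ (which in fact needs only divisibility and cut-transitivity, not auxiliarity) and (ii) that \eqref{vdashll} implies auxiliarity, via the frame-distributivity expansion over choice functions; (ii) is correct, if considerably more laborious than necessary. What is never established is the remaining implication: assuming auxiliarity, that $\bigwedge_{f\in F}f^\dashv\ll\bigvee_{g\in G}g^\dashv$ forces $F\vdash G$ --- and this is precisely the half where auxiliarity does the work. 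The paper sidesteps all of this by proving the single equivalence $\bigwedge_{f\in F}f^\dashv\ll\bigvee_{g\in G}g^\dashv\Leftrightarrow F\Vdash\bullet\vdash G$ directly from \eqref{WayBelow} and \eqref{Vdashsubseteq}: a witness $\mathcal{H}\in\mathsf{F}(G^\dashv)$ with $\{F\}\hspace{-2pt}\downarrow\ \vdash\mathcal{H}_\between$ is literally a witness for $F\Vdash\bullet\vdash G$. Since $\vdash\ \subseteq\ \Vdash\bullet\vdash$ always holds ($\Vdash$ being $1$-reflexive), auxiliarity is equivalent to $\Vdash\bullet\vdash\ =\ \vdash$, and both directions of the theorem's ``precisely when'' drop out at once, with no need for the distributive expansion or the finite-join stability of $\ll$. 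You should either supply the missing implication (unpack $\ll$ via \eqref{WayBelow} into $F\Vdash\bullet\vdash G$ and apply auxiliarity) or restructure the last part around this equivalence.
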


\begin{proof}
As $\vdash$ is cut-idempotent, $\mathcal{P}\hspace{-3pt}\downarrow$ is a quasi-ideal, for any $\mathcal{P}\subseteq\mathsf{B}S$.  The quasi-ideals thus form a complete lattice where the join of $\mathscr{P}\subseteq\mathsf{Q}S$ is given by
\[\bigvee\mathscr{P}=(\bigcup\mathscr{P}){\downarrow}.\]
If $\mathscr{D}\subseteq\mathsf{Q}S$ is directed then $\bigcup\mathscr{D}$ is already a quasi-ideal and hence $\bigvee\mathscr{D}=\bigcup\mathscr{D}$.  In particular, for any $\mathcal{Q}\in\mathsf{Q}S$, we see that
\[\mathcal{Q}=\bigcup_{\mathcal{G}\in\mathsf{F}\mathcal{Q}}\mathcal{G}\hspace{-3pt}\downarrow\ =\bigvee_{\mathcal{G}\in\mathsf{F}\mathcal{Q}}\mathcal{G}\hspace{-3pt}\downarrow.\]
If we have $\mathcal{P}\in\mathsf{Q}S$ with $\mathcal{P}\ll\mathcal{Q}$ then we must have $\mathcal{G}\in\mathsf{F}\mathcal{Q}$ with $\mathcal{P}\subseteq\mathcal{G}\hspace{-3pt}\downarrow$, i.e. $\mathcal{P}\vdash\mathcal{G}_\between$.  Conversely, if we have $\mathcal{G}\in\mathsf{F}\mathcal{Q}$ with $\mathcal{P}\vdash\mathcal{G}_\between$ then, whenever $\mathcal{Q}\subseteq\bigvee\mathscr{D}$, for some directed $\mathscr{D}\subseteq\mathsf{Q}S$, we can find $\mathcal{R}\in\mathscr{D}$ with $\mathcal{G}\subseteq\mathcal{R}$ and hence $\mathcal{P}\subseteq\mathcal{R}$, as $\mathcal{R}$ is a quasi-ideal, showing that $\mathcal{P}\ll\mathcal{Q}$.  This proves \eqref{WayBelow}.  In particular, $\mathcal{G}\hspace{-3pt}\downarrow\ \ll\mathcal{Q}$, whenever $\mathcal{G}\in\mathsf{F}\mathcal{Q}$ and $\mathcal{Q}\in\mathsf{Q}S$.  For any $\mathcal{Q}\in\mathsf{Q}S$, it follows that $\mathcal{Q}=\bigvee_{\mathcal{G}\in\mathsf{F}\mathcal{Q}}\mathcal{G}\hspace{-3pt}\downarrow\ =\bigvee_{\mathcal{P}\ll\mathcal{Q}}\mathcal{P}$, showing that $\mathsf{Q}S$ is a continuous lattice.

Finite meets, on the other hand, are given by intersections or, alternatively, by the $\wedge$ operation restricted $\mathsf{Q}S$.  Indeed, for any $\mathcal{P},\mathcal{Q}\in\mathsf{Q}S$, we immediately see that $\mathcal{P}\cap\mathcal{Q}\subseteq\mathcal{P}\wedge\mathcal{Q}$.  Also, if $\mathcal{P}$ and $\mathcal{Q}$ quasi-ideals and $F\vdash\mathcal{G}_\between$, for some $\mathcal{G}\in\mathsf{F}(\mathcal{P}\cap\mathcal{Q})=\mathsf{F}\mathcal{P}\cap\mathsf{F}\mathcal{Q}$, then $F\in\mathcal{P}\cap\mathcal{Q}$, i.e. $(\mathcal{P}\cap\mathcal{Q})\hspace{-3pt}\downarrow\ \subseteq\mathcal{P}\cap\mathcal{Q}$.  Conversely, for any $F\in\mathcal{P}$ and $G\in\mathcal{Q}$, we have $\mathcal{F}\in\mathsf{F}\mathcal{P}$ and $\mathcal{G}\in\mathsf{F}\mathcal{Q}$ with $F\vdash\mathcal{F}_\between$ and $G\vdash\mathcal{G}_\between$.  As $\vdash$ is a lower relation, it follows that $F\cup G\vdash\mathcal{F}_\between\cup\mathcal{G}_\between=(\mathcal{F}\wedge\mathcal{G})_\between$ and hence $F\cup G\in\mathcal{P}\cap\mathcal{Q}$, as $\mathcal{P}$ and $\mathcal{Q}$ are quasi-ideals.  This shows that $\mathcal{P}\wedge\mathcal{Q}\subseteq\mathcal{P}\cap\mathcal{Q}$ and $\mathcal{P}\wedge\mathcal{Q}\subseteq(\mathcal{P}\wedge\mathcal{Q})\hspace{-3pt}\downarrow$.  Thus $\mathcal{P}\cap\mathcal{Q}=\mathcal{P}\wedge\mathcal{Q}$ is a quasi-ideal.  Now if $\mathcal{R}\ll\mathcal{P}$ and $\mathcal{R}\ll\mathcal{Q}$ then we have $\mathcal{F}\in\mathsf{F}\mathcal{P}$ and $\mathcal{G}\in\mathsf{F}\mathcal{Q}$ with $\mathcal{R}\vdash\mathcal{F}_\between\cup\mathcal{G}_\between=(\mathcal{F}\wedge\mathcal{G})_\between$ and hence $\mathcal{R}\ll\mathcal{P}\wedge\mathcal{Q}$, showing that $\mathsf{Q}S$ is an arithmetic lattice.

To see that $\mathsf{Q}S$ is a frame, i.e. that finite meets distribute over infinite joins, take any $\mathscr{P}\subseteq\mathsf{Q}S$ and $\mathcal{Q}\in\mathsf{Q}S$.  It suffices to show that $(\bigvee\mathscr{P})\wedge\mathcal{Q}\subseteq\bigvee_{\mathcal{P}\in\mathscr{P}}(\mathcal{P}\wedge\mathcal{Q})$.  Accordingly, take any $F\in\bigvee\mathscr{P}$ and $G\in\mathcal{Q}$, so we have $\mathcal{H}\in\mathsf{F}(\bigcup\mathscr{P})$ such that $F\vdash\mathcal{H}_\between$.  As $\mathcal{Q}$ is a quasi-ideal, we also have $\mathcal{I}\in\mathsf{F}\mathcal{Q}$ with $G\vdash\mathcal{I}_\between$ and hence
\[F\cup G\vdash\mathcal{H}_\between\cup\mathcal{I}_\between=(\mathcal{H}\wedge\mathcal{I})_\between.\]
As $\mathcal{H}\wedge\mathcal{I}\subseteq\mathcal(\bigcup\mathscr{P})\wedge\mathcal{Q}=\bigcup_{\mathcal{P}\in\mathscr{P}}(\mathcal{P}\wedge\mathcal{Q})$, this means that $F\cup G\in\bigvee_{\mathcal{P}\in\mathscr{P}}(\mathcal{P}\wedge\mathcal{Q})$.  This shows that $\mathcal{Q}\wedge(\bigvee\mathscr{P})\subseteq\bigvee_{\mathcal{P}\in\mathscr{P}}(\mathcal{P}\wedge\mathcal{Q})$, as required.

This completes the proof of the first part, i.e. we have shown that $\mathsf{Q}S$ is an arithmetic lattice whenever $\vdash$ is a monotone cut-idempotent.  We certainly always have $g^\dashv\subseteq G^\dashv$, whenever $g\in G\in\mathsf{F}S$, and hence $\bigvee_{g\in G}g^\dashv\subseteq G^\dashv$.  The reverse inclusion also holds precisely when $F\vdash G$ always implies $F\in\bigvee_{g\in G}g^\dashv$.  But $F\in\bigvee_{g\in G}g^\dashv$ is saying that we have $\mathcal{G}\in\mathsf{F}(\bigcup_{g\in G}g^\dashv)$, i.e. $\mathcal{G}\vdash_{1\exists}G$, such that $F\vdash\mathcal{G}_\between$ and hence $F\vdash\bullet\vdash_{1\exists}G$.  Thus $\vdash\ \subseteq\ \vdash\bullet\vdash_{1\exists}$, i.e. $\vdash$ is divisible, precisely when \eqref{Gdashv} holds, i.e. $G^\dashv=\bigvee_{g\in G}g^\dashv$, for all $G\in\mathsf{F}S$.

In particular, if $\vdash$ is divisible then, for all $F,G\in\mathsf{F}S$,
\[\bigwedge_{f\in F}f^\dashv\subseteq\bigvee_{g\in G}g^\dashv\ \ \Leftrightarrow\ \ \{F\}\hspace{-3pt}\downarrow\ \vdash G\ \ \Leftrightarrow\ \ \forall H\in\mathsf{B}S\ (H\vdash\mathsf{1}F\ \Rightarrow\ H\vdash G)\ \ \Leftrightarrow\ \  F\Vdash G.\]
This proves and \eqref{Vdashsubseteq} and it follows from this and \eqref{WayBelow} that
\[\bigwedge_{f\in F}f^\dashv\ll\bigvee_{g\in G}g^\dashv\qquad\Leftrightarrow\qquad\exists\mathcal{H}\in\mathsf{F}(G^\dashv)\ (\{F\}\hspace{-3pt}\downarrow\ \vdash\mathcal{H}_\between)\qquad\Leftrightarrow\qquad F\Vdash\bullet\vdash G.\]
Thus $\vdash\ =\ \Vdash\bullet\vdash$, i.e. $\vdash$ is auxiliary to $\Vdash$, precisely when \eqref{vdashll} holds.

If $\vdash$ is divisible then we further claim that, for all $\mathcal{F},\mathcal{G}\in\mathsf{FB}S$,
\begin{equation}\label{FunionG}
\mathcal{F}\hspace{-3pt}\downarrow\vee\ \mathcal{G}\hspace{-3pt}\downarrow\ =(\mathcal{F}\cup\mathcal{G}){\downarrow}.
\end{equation}
Again, $\mathcal{F}\hspace{-3pt}\downarrow\vee\ \mathcal{G}\hspace{-3pt}\downarrow\ \subseteq(\mathcal{F}\cup\mathcal{G}){\downarrow}$ is immediate.  Conversely, $F\in(\mathcal{F}\cup\mathcal{G}){\downarrow}$ means that $F\vdash(\mathcal{F}\cup\mathcal{G})_\between$ and hence $F\vdash_\between\mathcal{H}\vdash_*\mathcal{F}\cup\mathcal{G}$, for some $\mathcal{H}\in\mathsf{FF}S$, by \eqref{BetweenInterpolation}.  Note $\mathcal{H}\vdash_*\mathcal{F}\cup\mathcal{G}$ implies $\mathcal{H}\subseteq\mathcal{F}\hspace{-3pt}\downarrow\cup\ \mathcal{G}\hspace{-3pt}\downarrow$.  As $F\vdash\mathcal{H}_\between$, this shows that $F\in(\mathcal{F}\hspace{-3pt}\downarrow\cup\ \mathcal{G}\hspace{-3pt}\downarrow){\downarrow}=\mathcal{F}\hspace{-3pt}\downarrow\vee\ \mathcal{G}\hspace{-3pt}\downarrow$, which proves the claim.  For all $\mathcal{G}\in\mathsf{FB}S$, it follows that
\[\mathcal{G}\hspace{-3pt}\downarrow\ =\big(\bigcup_{G\in\mathcal{G}}\{G\}\big)\hspace{-3pt}\downarrow\ =\bigvee_{G\in\mathcal{G}}\{G\}\hspace{-3pt}\downarrow\ =\bigvee_{G\in\mathcal{G}}\bigcap_{g\in G}g^\dashv\ =\bigvee_{G\in\mathcal{G}}\bigwedge_{g\in G}g^\dashv.\]
For any $\mathcal{Q}\in\mathsf{Q}S$, it follows that
\[\mathcal{Q}=\bigcup_{\mathcal{G}\in\mathsf{F}\mathcal{Q}}\mathcal{G}\hspace{-3pt}\downarrow\ =\bigvee_{\mathcal{G}\in\mathsf{F}\mathcal{Q}}\mathcal{G}\hspace{-3pt}\downarrow\ =\bigvee_{\mathcal{G}\in\mathsf{F}\mathcal{Q}}\bigvee_{G\in\mathcal{G}}\bigwedge_{g\in G}g^\dashv\ =\bigvee_{G\in\mathcal{Q}}\bigwedge_{g\in G}g^\dashv.\]
This shows that $\mathsf{Q}S$ is indeed generated by the principal quasi-ideals $(s^\dashv)_{s\in S}$.
\end{proof}

For general monotone cut-idempotents, there are thus two competing notions of what a `finite' quasi-ideal could be.  On the one hand, we have the \emph{finitely generated} quasi-ideals $(\bigvee_{F\in\mathcal{F}}\{F\}\hspace{-3pt}\downarrow)_{\mathcal{F}\in\mathsf{FB}S}$, which form a sublattice of $\mathsf{Q}S$ and also a distributive proximity lattice w.r.t. $\ll$.

On the other hand, we have the \emph{finitely principal} quasi-ideals $(\mathcal{F}\hspace{-3pt}\downarrow)_{\mathcal{F}\in\mathsf{FB}S}$, which form a basis for $\mathsf{Q}S$, i.e. every quasi-ideal is a (infinite) join of finitely principal quasi-ideals.  However, they do not form a lattice, as they only closed under finite meets, not finite joins -- while they do have a join-like operation $(\mathcal{F}\hspace{-3pt}\downarrow,\mathcal{G}\hspace{-3pt}\downarrow)\mapsto(\mathcal{F}\cup\mathcal{G})\hspace{-3pt}\downarrow$, these are not joins in $\mathsf{Q}S$ unless $\vdash$ is divisible (see \eqref{FunionG} above).  Thus it is only for strong idempotents that these two classes of `finite' quasi-ideals coincide and then also satisfy \eqref{veeInterpolation}.

To illustrate some of these deficiencies, we give another simple example.

\begin{xpl}\label{DyadicXpl}
Say we are given a Scott relation $\vDash$ on $\mathsf{F}S$ and a stronger monotone cut-idempotent $\vdash\ \subseteq\ \vDash$ which is also auxiliary to $\vDash$.  We claim that their cut-composition $\vdash\bullet\vDash$ is also a monotone cut-idempotent.  Indeed, monotonicity immediately passes to the composition, while for cut-idempotence just note that
\[\vdash\bullet\vDash\ \ =\ \ \vdash\bullet\vdash\bullet\vDash\ \ =\ \ \vdash\bullet\vDash\bullet\vdash\bullet\vDash.\]

However, $\vdash\bullet\vDash$ need not be divisible, even if $\vdash$ is.  For example, $\vDash$ and $\vdash$ could be the relations from \autoref{DenseCovers} and \autoref{SubsetCex} respectively defined on finite families $\mathsf{FO}X$ of open subsets of some topological space $X$, i.e.
\begin{align*}
F\vDash G\qquad&\Leftrightarrow\qquad\bigcap F\subseteq\mathrm{cl}(\bigcup G).\\
F\vdash G\qquad&\Leftrightarrow\qquad\bigcap F\Subset\bigcup G.
\end{align*}
In this case, $\vdash\bullet\vDash$ can be characterised as follows -- for all $F,G\in\mathsf{FO}X$,
\[F\vdash\bullet\vDash G\qquad\Leftrightarrow\qquad\bigcap F\Subset\mathrm{int}(\mathrm{cl}(\bigcup G)).\]
For example, let $X=(0,1)$ in its usual topology and let $p=(0,\frac{1}{2})$ and $q=(\frac{1}{2},1)$.  Then $X\vdash\{p,q\}$ even though $X\not\vdash\{r,s\}$ for any $r\vdash p$ and $s\vdash q$.  It follows that $\vdash$ is not divisible and joins do not correspond to unions in $\mathsf{F}S$, e.g.
\[(\{p\}\cup\{q\})^\dashv=\{p,q\}^\dashv=X^\dashv\neq(p\cup q)^\dashv=p^\dashv\vee q^\dashv.\]

Now let us restrict to the family $S$ of all open subintervals of $(0,1)$ formed from consecutive dyadic rationals, i.e.
\[S=\{((k-1)/2^n,k/2^n):1\leq k\leq 2^n\}.\]
The cut-composition of the restriction of the relations above to $\mathsf{F}S$ does not have the same characterisation, however it can be verified directly that we still obtain monotone cut-idempotent $\vdash$ on $\mathsf{F}S$ when we define
\[F\vdash G\qquad\Leftrightarrow\qquad\bigcap F\Subset\mathrm{int}(\mathrm{cl}(\bigcup G)).\]
In this case, the principal quasi-ideals do not even generate $\mathsf{Q}S$, for example $\{s\in S:s\Subset(0,\frac{3}{4})\}$ is a quasi-ideal which is not generated by principal quasi-ideals (as $(0,\frac{3}{4})$ is not a union of elements of $S$, only the smaller subset $(0,\frac{1}{2})\cup(\frac{1}{2},\frac{3}{4})$ is).  Moreover, the tight spectrum of $S$ consists of subsets $S_x=\{s\in S:x\in s\}$ where $x\in(0,1)$ is not a dyadic rational (roundness fails when $x$ is dyadic).  We can thus identify $\mathsf{T}^S$ with the non-dyadic points of $(0,1)$ in the usual subspace topology, which is not even core compact.
\end{xpl}

\subsection{Recovery}
Our next goal is to show that all stably locally compact spaces arise via \autoref{TopRep}.  This follows from the result below, which says that any stably locally compact space can be recovered from the spectrum of any subbasis under the cover relation $\vdash_\Subset$ arising from compact containment.  We can even say something about more general core compact spaces.

First we denote the \emph{specialisation preorder} on $X$ by $\rightarrow$, i.e. for any subbasis $S$,
\[x\rightarrow y\qquad\Leftrightarrow\qquad\forall p\in S\ (y\in p\ \Rightarrow\ x\in p).\]
So $x\rightarrow y$ iff $x$ converges to $y$ in the topology generated by $S$, viewing $x$ as a singleton net or a constant sequence.  Further denote the \emph{saturation} of $Y\subseteq X$ by
\[\mathrm{sat}(Y)=Y^\leftarrow=\{x\in X:\exists y\in Y\ (x\rightarrow y)\}=\bigcap\{O\in\mathsf{O}X:Y\subseteq O\}.\]

The \emph{Alexandroff topology} on $X$ is generated by point-closures $\mathrm{cl}\{x\}=x^\rightarrow$ in the original topology.  It then follows that arbitrary closed subsets in the original topology are open in the Alexandroff topology.  Moreover, saturation in the original topology just becomes the closure in the Alexandroff topology.

The \emph{strong patch topology} is that generated by the original topology and the Alexandroff topology, i.e. by both open and closed subsets.  We call a subspace $D\subseteq X$ \emph{strongly dense} if it is dense in the strong patch topology.

\begin{prp}\label{StronglyDense}
A subset $Y$ is strongly dense in $X$ if and only if, for all $O\subseteq X$,
\[O\text{ is open}\qquad\Rightarrow\qquad\mathrm{sat}(Y\cap O)=O.\]
\end{prp}

\begin{proof}
Say $Y$ is not strongly dense in $X$, so we have open $O$ and closed $C$ in the original topology of $X$ where $O\cap C\neq\emptyset$ is disjoint from $Y$.  But then $O\setminus C$ is an open subset of the original topology which contains $Y\cap O$, showing that $\mathrm{sat}(Y\cap O)\neq O$.  Conversely, if we have open $O$ with $\mathrm{sat}(Y\cap O)\neq O$ then, taking $x\in O\setminus \mathrm{sat}(Y\cap O)$, we see that $O\cap x^\rightarrow$ is disjoint from $Y$, contradicting strong density.
\end{proof}

So if $Y$ is strongly dense in $X$ then the usual map $O\mapsto O\cap Y$ from $\mathsf{O}X$ onto $\mathsf{O}Y$ is actually an order isomorphism with inverse map $N\mapsto\mathrm{sat}(N)$.  In particular, any properties of $X$ that depend only on the order structure of its open subsets immediately pass to strongly dense subspaces, e.g. any strongly dense subspace of a core compact or core coherent space is again core compact or core coherent.

The \emph{patch topology} on $X$ is generated by open sets and complements of compact saturated sets.  We let $X_\square$ denote $X$ with this topology and call it the \emph{patch space}.  We call $D\subseteq X$ \emph{very dense} if $D$ is dense in both the patch topology and Alexandroff topology, i.e. $X$ is both the patch-closure and saturation of $D$,
\[\tag{Very Dense}X=\mathrm{sat}(D)=\mathrm{cl}_\square(D).\]

For example, if $X=\mathbb{R}$ in the lower topology generated by $x^>$, for $x\in\mathbb{R}$, then the patch topology is the usual topology generated by intervals $(x,y)=x^<\cap y^>$, for $x,y\in\mathbb{R}$, while the strong patch topology yields the Sorgenfrey line, i.e. the topology generated by intervals $[x,y)=x^\leq\cap y^>$, for $x,y\in\mathbb{R}$.

We follow the convention that a subbasis must cover the space.

\begin{thm}\label{Recovery}
If $S$ is a subbasis of $T_0$ core compact $X$ and $\vdash\ =\ \vdash_\Subset$ then
\[x\mapsto S_x=\{p\in S:x\in p\}\]
is a homeomorphism from $X$ onto a very dense subspace of $\mathsf{T}^S$.  Moreover,
\begin{enumerate}
\item If $X$ is core coherent then $(S_x)_{x\in X}$ is even strongly dense in $\mathsf{T}^S$.
\item if $X$ is also sober then $(S_x)_{x\in X}$ is actually the whole of $\mathsf{T}^S$.
\end{enumerate}
\end{thm}

\begin{proof}
If $x\in\bigcap F\Subset\bigcup G$ then certainly $S_x\cap G\neq\emptyset$.  Conversely, if $x\in g\in S$ then core compactness yields $F\in\mathsf{F}S$ with $x\in\bigcap F\Subset g$ and hence $F\subseteq S_x$.  This shows that $S_x$ is tight so $x\mapsto S_x$ is indeed a map from $X$ to $\mathsf{T}^S$.  As $X$ is $T_0$ and $S$ is a subbasis of $X$, the map is injective.  As $(\mathsf{T}_p)_{p\in S}$ is a subbasis of $\mathsf{T}^S$ and
\begin{equation}\label{xinp}
x\in p\qquad\Leftrightarrow\qquad p\in S_x\qquad\Leftrightarrow\qquad S_x\in\mathsf{T}_p,
\end{equation}
for all $p\in S$, the map is also a homeomorphism onto its range.

To see that $(S_x)_{x\in X}$ is patch-dense in $\mathsf{T}^S$, say we have open $O$ and compact saturated $C$ in $\mathsf{T}^S$ such that $O\setminus C\neq\emptyset$.  Taking $T\in O\setminus C$, we must have $F,G\in\mathsf{F} S $ with $T\in\mathsf{T}_F\subseteq O$ and $T\notin\mathsf{T}^G\supseteq C$, as $C$ is compact saturated.  As $F\subseteq T$, $G\cap T=\emptyset$ and $T$ is tight, it follows that $F\not\vdash G$, i.e. $\bigcap F\not\Subset\bigcup G$.  As $\mathsf{T}^S$ is core compact, we have $\mathcal{H}\in\mathsf{F}\mathsf{F} S  $ with $C\subseteq\bigcup_{H\in\mathcal{H}}\mathsf{T}_H\Subset\mathsf{T}^G$ and hence $\mathcal{H}\vdash G$, i.e. $\bigcup_{H\in\mathcal{H}}\bigcap H\Subset\bigcup G$.  This means $\bigcap F\nsubseteq\bigcup_{H\in\mathcal{H}}\bigcap H$, as $\bigcap F\not\Subset\bigcup G$, so we have $x\in\bigcap F\setminus\bigcup_{H\in\mathcal{H}}\bigcap H$ and hence $S_x\in\mathsf{T}_F\setminus\bigcup_{H\in\mathcal{H}}\mathsf{T}_H\subseteq O\setminus C$, showing that $(S_x)_{x\in X}$ is patch-dense in $\mathsf{T}^S$.

To see that $\mathsf{T}^S$ is the saturation of $(S_x)_{x\in X}$, take $T\in\mathsf{T}^S$.  If we had $X=\bigcup(S\setminus T)$ then, taking any $t\in T$ and $F\in\mathsf{F}T$ with $\bigcap F\Subset t$, we would have $\bigcap F\Subset\bigcup G$, for some finite $G\subseteq S\setminus T$, contradicting the tightness of $T$.  Taking $x\in X\setminus\bigcup(S\setminus T)$ we see that $S_x\subseteq T$ and hence $T\rightarrow S_x$.  Thus $(S_x)_{x\in X}$ is very dense in $\mathsf{T}^S$.

This completes the proof of the first part of the theorem, while the remaining two parts, when $X$ is also core coherent and sober, are proved as follows.

\begin{enumerate}
\item Assume $X$ is also core coherent.  To see that $(S_x)_{x\in X}$ is strongly dense in $\mathsf{T}^S$ it suffices to show that, whenever $F\in\mathsf{F}S$ and $T\in\mathsf{T}_F$, we have some $x\in S$ such that $S_x\in\mathsf{T}_F\cap T^\rightarrow$.  Arguing as above note that, for each $f\in F$, we have $G_f\in\mathsf{F}T$ with $G_f\Subset f$ and hence $\bigcap_{f\in F}G_f\Subset\bigcap F$, by core coherence.  Then $\bigcap F\subseteq\bigcup(S\setminus T)$ would imply $\bigcap F\Subset\bigcup G$, for some finite $G\subseteq S\setminus T$, contradicting the tightness of $T$.  Thus we have some $x\in\bigcap F\setminus\bigcup(S\setminus T)$, which implies that $S_x\in\mathsf{T}_F$ and $S_x\subseteq T$, i.e. $S_x\in\mathsf{T}_F\cap T^\rightarrow$, as required.

\item Now assume $X$ is stably locally compact and take $T\in\mathsf{T}^S$.  We claim that $C=X\setminus\bigcup(S\setminus T)$ is irreducible.  This amounts to showing that, for all finite $F$ in the subbasis, $\bigcap F$ intersects $C$ whenever each $f\in F$ intersects $C$.  As $C=X\setminus\bigcup(S\setminus T)$, the only $p\in S$ intersecting $C$ are the elements of $T$, i.e. it suffices to show $\bigcap F\cap C\neq\emptyset$, for all finite $F\subseteq T$.  To see this, note $F\cap C=\emptyset$ would mean $F\subseteq\bigcup S\setminus T$.  As $T$ is round, we would then have finite $G\subseteq T$ with $G\Subset f$, for all $f\in F$.  As $X$ is core coherent, this means $\bigcap G\Subset\bigcap F$.  By core compactness, we can take $D$ with $\bigcap G\Subset D\Subset\bigcap F\subseteq\bigcup S\setminus T$ and then get finite $H\subseteq S\setminus T$ with $\bigcap G\Subset D\subseteq\bigcup H$ so $T\supseteq G\vdash H\subseteq S\setminus T$, contradicting the tightness of $T$.

Thus $C$ is irreducible and hence $C=\mathrm{cl}\{x\}$, for some $x\in X$, as $X$ is also sober.  If we had $t\in T\setminus S_x$, it would follow that $t\cap C=\emptyset$ and hence $t\subseteq\bigcup(S\setminus T)$, but this would contradict the tightness of $T$ as above.  Thus we must have $T\subseteq S_x$, while $S_x\subseteq T$ is immediate from the fact that $x\notin\bigcup(S\setminus T)$.  Thus $T=S_x$, showing that the map $x\mapsto S_x$ is surjective. \qedhere
\end{enumerate}
\end{proof}

This immediately allows us to characterise core compact core coherent spaces as certain subspaces of stably locally compact spaces.

\begin{cor}
Core compact core coherent topological spaces are precisely the strongly dense subspaces of stably locally compact spaces.
\end{cor}

\begin{proof}
Any strongly dense subspace of a stably locally compact space is still core compact and core coherent, as noted after \autoref{StronglyDense}.  Conversely, any core compact core coherent space $X$ can be identified with a strongly dense subspace of the tight spectrum $\mathsf{T}^S$, where $S$ is any subbasis of $X$, by \autoref{Recovery}.
\end{proof}

Incidentally, this could also be achieved by the using the usual sobrification of a space (see \cite[\S8.2.3]{Goubault2013}) rather than the tight spectrum considered here.  One just needs to note that any $T_0$ space is strongly dense in its sobrification.

Consider the following classes.
\begin{align*}
\mathbf{Cov}&=\{(S,\vdash):\text{$\vdash$ is a cover relation on }\mathsf{F}S\}.\\
\mathbf{ACov}&=\{(S,\vdash):\text{$\vdash$ is a cover relation on }\mathsf{F}S\text{ and $\Vdash$ is $1$-antisymmetric}\}.\\
\mathbf{Sub}&=\{(S,X):S\text{ is a subbasis of a core compact $T_0$ space }X\}.\\
\mathbf{SSub}&=\{(S,X):S\text{ is a subbasis of a stably locally compact space }X\}.
\end{align*}
(here $1$-antisymmetric means ${}_1{\Vdash_1}$ is antisymmetric, i.e. $p\Vdash q\Vdash p$ implies $p=q$).  We refer to the elements of $\mathbf{Cov}$ as \emph{cover systems}, while those in $\mathbf{ACov}$ are also said to be \emph{antisymmetric}.  We can summarise our main results thus far as follows.
\begin{thm}\label{ACovSSub}
$\mathbf{ACov}$ is dual to $\mathbf{SSub}$.
\end{thm}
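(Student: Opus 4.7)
The plan is to exhibit mutually inverse assignments $\Phi \colon \mathbf{SSub} \to \mathbf{ACov}$ and $\Psi \colon \mathbf{ACov} \to \mathbf{SSub}$ and invoke \autoref{TopRep} and \autoref{Recovery} for the two round-trips.

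First, define $\Phi(S, X) = (S, \vdash_\Subset)$ using the compact cover relation from \autoref{SubsetCex}. Any subbasis of a core compact space is $\cap$-round and every stably locally compact space is core coherent, so \autoref{SubsetCex} shows that $\vdash_\Subset$ is a cover relation. Under core coherence, the final observation of \autoref{SubsetCex} also yields $F \Vdash G \Leftrightarrow \bigcap F \subseteq \bigcup G$, so on singletons $s \Vdash t \Leftrightarrow s \subseteq t$; antisymmetry of $\Vdash$ on $S$ therefore reduces to antisymmetry of set inclusion, which holds because distinct subbasis elements are distinct open sets. Hence $\Phi(S, X) \in \mathbf{ACov}$.

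Dually, define $\Psi(S, \vdash) = (\{\mathsf{T}_s : s \in S\}, \mathsf{T}^S)$. By \autoref{TopRep}, $\mathsf{T}^S$ is stably locally compact with $(\mathsf{T}_s)_{s \in S}$ as a subbasis, and the map $s \mapsto \mathsf{T}_s$ is injective: if $\mathsf{T}_s = \mathsf{T}_t$ then \eqref{FDGT} gives $s \Vdash t$ and $t \Vdash s$, and antisymmetry forces $s = t$. Thus $S$ may be identified with the subbasis $\{\mathsf{T}_s : s \in S\}$ of $\mathsf{T}^S$, so $\Psi(S, \vdash) \in \mathbf{SSub}$.

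For $\Psi \circ \Phi \cong \mathrm{id}$, starting from $(S, X) \in \mathbf{SSub}$, \autoref{Recovery} says that $x \mapsto S_x$ is a homeomorphism from $X$ onto $\mathsf{T}^S$ (surjectivity uses core coherence and sobriety), under which each $p \in S$ corresponds to $\{S_x : x \in p\} = \mathsf{T}_p$. For $\Phi \circ \Psi \cong \mathrm{id}$, starting from $(S, \vdash) \in \mathbf{ACov}$, the cover relation $\vdash_\Subset$ induced on $(\mathsf{T}_s)_{s \in S}$ is given by $F \vdash_\Subset G \Leftrightarrow \mathsf{T}_F \Subset \mathsf{T}^G$, and \eqref{FCGT} together with its converse (which holds because $\vdash$ is a cover relation) says precisely $\mathsf{T}_F \Subset \mathsf{T}^G \Leftrightarrow F \vdash G$; so $\vdash_\Subset$ transports back to $\vdash$ along the injection $s \mapsto \mathsf{T}_s$.

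The main obstacle I anticipate is purely notational, namely organising the identifications between $S$ and its image as a subbasis so that the two round-trips are genuine equalities rather than merely bijections up to isomorphism; the substantive content is already packaged into \autoref{TopRep} and \autoref{Recovery}, and the antisymmetry hypothesis in $\mathbf{ACov}$ is exactly what is needed to make $s \mapsto \mathsf{T}_s$ a bona fide injection.
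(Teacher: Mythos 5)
Your proposal is correct and follows essentially the same route as the paper: the assignments $(S,X)\mapsto(S,\vdash_\Subset)$ and $(S,\vdash)\mapsto((\mathsf{T}_s)_{s\in S},\mathsf{T}^S)$, with the round-trips handled by \autoref{Recovery} and by \autoref{TopRep} together with the converse of \eqref{FCGT}. Your explicit verification that antisymmetry of $\Vdash$ corresponds exactly to injectivity of $s\mapsto\mathsf{T}_s$ (via the core-coherent case of \eqref{FsubG}) is a welcome elaboration of a point the paper leaves implicit.
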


More precisely, every subbasis $S$ of a stably locally compact space $X$ forms an antisymmetric cover system under the compact cover relation $\vdash_\Subset$, as we saw in \autoref{SubsetCex}.  Moreover, the underlying space $X$ can be recovered via the spectrum, by \autoref{Recovery}.  Conversely, any antisymmetric cover system $(S,\vdash)$ can be faithfully represented as the subbasis $(\mathsf{T}_p)_{p\in S}$ of the stably locally compact space $\mathsf{T}^S$, by \autoref{TopRep}.  Moreover, $\vdash$ can be recovered as the compact cover relation on this subbasis, by \eqref{FCGT}.

More formally, what we have here is an equivalence of categories but where we only consider isomorphisms in both $\mathbf{ACov}$ and $\mathbf{SSub}$ (in the next section we will consider more general morphisms arising from continuous maps).  So, at the isomorphism level at least, this duality between $\mathbf{ACov}$ and $\mathbf{SSub}$ extends the classic Stone duality between distributive lattices (considered as cover systems via \eqref{MeetJoinCover}) and compact open lattice bases of sober spaces.

For the corresponding Priestley duality extension, we consider patch topologies and `pseudosubbases'.

\begin{dfn}
We call $S\subseteq\mathsf{O}X$ a \emph{pseudosubbasis} of $X$ if $S$ is $\cap$-round and
\[\tag{$T_0$}x,y\in X\qquad\Rightarrow\qquad\exists p\in S\ (|p\cap\{x,y\}|=1).\]
\end{dfn}

For any open subsets $O$ and $N$ of Hausdorff $X$, note $O\Subset N$ means $\mathrm{cl}(O)$ is compact and contained in $N$.  If $P$ is pseudosubbasis of $X$ then, for any $x\in X$, we have $p\in P$ with $x\in p$ and then $\cap$-roundness yields $F\in\mathsf{F}P$ with $x\in\bigcap F\Subset s$.  In particular, this shows that each $x\in X$ has a compact neighbourhood.  Thus any Hausdorff space with a pseudosubbasis is automatically locally compact.

Let
\[\mathbf{PSub}=\{(P,X):P\text{ is a pseudosubbasis of a Hausdorff space }X\}.\]
In \cite{BiceStarling2020HTight}, we showed $\mathbf{PSub}$ is the same as $\mathbf{SSub}$.  Specifically, if $(P,X)\in\mathbf{PSub}$ then $P$ generates a stably locally compact topology whose patch topology recovers the original Hausdorff topology.  Conversely, if $(P,X)\in\mathbf{SSub}$ then $P$ is still a pseudosubbasis in the finer patch topology.  We can thus rephrase \autoref{ACovSSub} as
\begin{thm}\label{ACovPSub}
$\mathbf{ACov}$ is dual to $\mathbf{PSub}$.
\end{thm}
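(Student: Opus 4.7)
The plan is to derive \autoref{ACovPSub} from \autoref{ACovSSub} by composing with the equivalence between $\mathbf{PSub}$ and $\mathbf{SSub}$ recalled just before the theorem statement (and proved in detail in \cite{BiceStarling2020HTight}).  Explicitly, that equivalence is witnessed by the mutually inverse assignments $(P,X)\mapsto(P,X_\star)$ and $(P,X)\mapsto(P,X_\square)$, where $X_\star$ denotes $X$ retopologised by the topology generated by $P$, and $X_\square$ is the patch space.  The key facts are that $X_\star$ is stably locally compact and $P$ is a subbasis of it, while $X_\square$ is Hausdorff and $P$ remains a pseudosubbasis of it; moreover, applying the two constructions in succession recovers the original space.

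Given this, the first step of the proof is to compose the two correspondences.  Starting from $(S,\vdash)\in\mathbf{ACov}$, \autoref{TopRep} and \autoref{Recovery} represent $(S,\vdash)$ faithfully as the subbasis $(\mathsf{T}_p)_{p\in S}$ of the stably locally compact space $\mathsf{T}^S$, and then passing to the patch space $\mathsf{T}^S_\square$ yields an element of $\mathbf{PSub}$.  In the other direction, starting from $(P,X)\in\mathbf{PSub}$, we pass to $(P,X_\star)\in\mathbf{SSub}$ and take the compact cover relation $\vdash_\Subset$ from \autoref{SubsetCex}, landing in $\mathbf{Cov}$; antisymmetry of the resulting $\Vdash$ follows from the $T_0$ separation built into the definition of a pseudosubbasis, giving an element of $\mathbf{ACov}$.

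The main potential subtlety is that the compact containment relation $\Subset$ used to form $\vdash_\Subset$ must be computed in $X_\star$, not in the Hausdorff space $X$; one needs to check that this gives back the same cover relation one would obtain from viewing $P$ directly as a subbasis of $X_\star$ in \autoref{ACovSSub}.  Here the relevant observation is that, by construction of the patch topology, the compact saturated subsets of $X_\star$ coincide with the compact subsets of the Hausdorff patch space $X$, so the way-below relation on the opens generated by $P$ is unambiguous, and applying the \autoref{ACovSSub} correspondence through either presentation gives the same cover system on $P$.  A symmetric check on the other side confirms that starting from $(S,\vdash)$, forming $\mathsf{T}^S$, passing to $\mathsf{T}^S_\square$, and then re-extracting the cover relation via \autoref{SubsetCex} and \autoref{Recovery} recovers $\vdash$ up to isomorphism.

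With these verifications in hand, \autoref{ACovPSub} is immediate: the isomorphism classes of $\mathbf{ACov}$, $\mathbf{SSub}$ and $\mathbf{PSub}$ are in natural bijection, and the bijection between the outer two is obtained by composing the bijection of \autoref{ACovSSub} with the $\mathbf{SSub}\leftrightarrow\mathbf{PSub}$ bijection from \cite{BiceStarling2020HTight}.  The hardest step is really just the bookkeeping in the previous paragraph, confirming that the patch/coarsening operations do not disturb the $\Subset$-relation used to encode $\vdash$; everything else is a transport of structure along an already-established equivalence.
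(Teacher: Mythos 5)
Your proposal is correct and follows essentially the same route as the paper, which treats \autoref{ACovPSub} as an immediate rephrasing of \autoref{ACovSSub} via the equivalence of $\mathbf{PSub}$ and $\mathbf{SSub}$ established in \cite{BiceStarling2020HTight}. The bookkeeping you flag about where $\Subset$ is computed is exactly what the paper's subsequent \autoref{HausdorffRecovery} addresses directly, so your verification step is consistent with (indeed anticipated by) the text.
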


As any pseudosubbasis induces a closed partial order, namely the specialisation preorder w.r.t. the topology it generates, pseudosubbases can always be viewed as up-subbases, i.e. subbases for the up-sets in the corresponding pospace.  Thus the duality between $\mathbf{ACov}$ and $\mathbf{PSub}$ extends the classic Priestley duality between distributive lattices and compact open lattice up-bases of Hausdorff pospaces.

We can also prove the Priestley analog of \autoref{Recovery} more directly.

\begin{thm}\label{HausdorffRecovery}
If $X$ is Hausdorff, $S$ is a pseudosubbasis for $X$ and $\vdash\ =\ \vdash_\Subset$ as in \eqref{SubsetC} then we have a homeomorphism from $X$ onto $\mathcal{T}^S_\square$ given by
\[x\mapsto S_x=\{p\in S:x\in p\}.\]
\end{thm}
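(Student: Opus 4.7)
The plan is to reduce this statement to \autoref{Recovery} via the identity $\mathbf{PSub}=\mathbf{SSub}$ recalled above from \cite{BiceStarling2020HTight}. Since $S$ is a pseudosubbasis of the Hausdorff space $X$, that identity tells us the topology $\tau$ generated by $S$ on the underlying set of $X$ is stably locally compact and that the original Hausdorff topology on $X$ is recovered as the patch topology $(X,\tau)_\square$. In particular, in this $\tau$-topology $S$ is a genuine subbasis of a core compact, core coherent, sober $T_0$ space.

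Now $S$ is a subbasis of the stably locally compact space $(X,\tau)$, so the surjective case of \autoref{Recovery} applies: the map $x\mapsto S_x$ is a homeomorphism from $(X,\tau)$ onto $\mathcal{T}^S$. Note also that the cover relation $\vdash_\Subset$ here is computed with respect to $\tau$, but by \autoref{TopRep} the same compact containment relation is recovered from the spectrum, so no ambiguity arises.

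Finally, the patch topology of any space is defined intrinsically from its open sets together with the complements of its compact saturated sets, both of which transport along any homeomorphism. Consequently the map $x\mapsto S_x$ is automatically also a homeomorphism from $(X,\tau)_\square$ onto $\mathcal{T}^S_\square$, and identifying $(X,\tau)_\square$ with $X$ in its original Hausdorff topology yields the desired conclusion.

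The only genuine content hides in the appeal to $\mathbf{PSub}=\mathbf{SSub}$, i.e.\ in the fact that every pseudosubbasis of a Hausdorff space generates a stably locally compact topology whose patch topology recovers the original; once this is granted, the theorem is essentially a matter of transporting the homeomorphism furnished by \autoref{Recovery} along patch topologies. If one preferred a self-contained argument, the main obstacle would instead be to verify directly that the induced map $x\mapsto S_x$ is both continuous and open with respect to the patch topologies, which amounts to checking that a subbasic open of $\mathcal{T}^S_\square$ of the form $\mathcal{T}^S\setminus \mathsf{T}_F$ (for $F\in\mathsf{F}S$ with $\mathsf{T}_F$ compact saturated) pulls back to the complement of a compact saturated set in $X$ and vice versa; this is where Hausdorffness and $\cap$-roundness of $S$ are used to identify $\bigcap F$-type sets with compact saturated sets on both sides.
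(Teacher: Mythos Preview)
Your strategy differs from the paper's: you reduce everything to \autoref{Recovery} via the $\mathbf{PSub}=\mathbf{SSub}$ correspondence, whereas the paper proves surjectivity of $x\mapsto S_x$ directly by a finite-intersection-property argument in the Hausdorff topology (showing $\bigcap_{t\in T}\mathrm{cl}(t)\cap\bigcap_{s\notin T}(X\setminus s)\neq\emptyset$ for each tight $T$), and only invokes \cite{BiceStarling2020HTight} at the very end to identify the patch of the generated topology with the original Hausdorff one. The paper's route is more self-contained; yours is shorter once the external result is taken for granted.

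That said, your reduction has a genuine gap. The hypothesis fixes $\vdash=\vdash_\Subset$ with $\Subset$ computed in the given Hausdorff topology, and $\mathsf{T}^S$ is the spectrum for \emph{that} relation. To apply \autoref{Recovery} to $(X,\tau)$ you instead need $\vdash_{\Subset_\tau}$, compact containment in the coarser generated topology, and you must verify that these two relations coincide on $\mathsf{F}S$ so that the two spectra (and hence the map's codomain) match. Your appeal to \autoref{TopRep} does not establish this: \autoref{TopRep} identifies an abstract cover relation with compact containment on its own spectrum, not two different compact-containment relations on $X$ itself. The required equality does hold --- one inclusion because $\tau$ is coarser so Hausdorff-compact sets are $\tau$-compact; the other because any $\tau$-compact saturated $K$ witnessing $\bigcap F\Subset_\tau\bigcup G$ is patch-compact (an ultrafilter on $K$ has a maximum $\tau$-limit by ultrasobriety, which lies in $K$ by saturation and is easily seen to be a patch-limit), hence Hausdorff-compact --- but this short argument is needed and is not what you wrote. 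The paper's direct proof avoids the issue altogether since it never refers to $\Subset_\tau$.
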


\begin{proof}
First note that we can essentially repeat the first part of the proof of \autoref{Recovery}.  Specifically, if $x\in\bigcap F\Subset\bigcup G$ then certainly $S_x\cap G\neq\emptyset$.  Conversely, if $x\in g\in G\in\mathsf{F} S $ then, as $S$ is $\cap$-round, we have $F\in\mathsf{F} S $ with $x\in\bigcap F\Subset g$ and hence $F\subseteq S_x$.  This shows that $S_x$ is tight so $x\mapsto S_x$ is indeed a map from $X$ to $\mathcal{T}^S$.  As $S$ is $T_0$, the map is injective.

To see that it is also surjective, take any tight $T\subseteq S$.  As $T$ is round and $X$ is Hausdorff, $\bigcap T=\bigcap_{t\in T}\mathrm{cl}(t)$.  Also note that $\bigcap F\setminus\bigcup G\neq\emptyset$ for any finite $F\subseteq T$ and $G\subseteq S\setminus T$ \textendash\, otherwise $\bigcap F\subseteq\bigcup G$ and the roundness of $T$ would yield finite $H\subseteq T$ with $\bigcap H\Subset\bigcap F\subseteq\bigcup G$, i.e. $H\vdash G$, contradicting the tightness of $T$.  Thus
\[\bigcap T\setminus\bigcup(S\setminus T)=\bigcap_{t\in T}\mathrm{cl}(t)\cap\bigcap_{s\notin T}X\setminus s\]
is an intersection of compact sets with the finite intersection property and is thus non-empty.  Taking $x\in\bigcap T\setminus\bigcup(S\setminus T)$, we immediately see that $S_x=T$.

As in \eqref{xinp}, we then see that $x\mapsto S_x$ is a homeomorphism from $X$ in the topology generated by $S$ to $\mathsf{T}^S$ (in the topology generated by $(\mathsf{T}_s)_{s\in S}$).  It is thus also a homeomorphism with respect to the corresponding patch topologies which, in the case of $X$, coincides with the original topology, by \cite[Proposition 2.37]{BiceStarling2020HTight}.
\end{proof}

\subsection{Stabilisation}

Even for a core compact $T_0$ space $X$, \autoref{Recovery} still yields a \emph{stabilisation} of $X$, i.e. a stably locally compact space $X'$ containing $X$ as a very dense subspace.  In fact, we can show that $\mathsf{T}^S$ above is the minimal stabilisation w.r.t. \emph{proximal} $\phi:Y\rightarrow X$, meaning that $\phi$ is continuous and, for all $O,N\in\mathsf{O}X$,
\[\tag{Proximal Map}O\Subset N\qquad\Rightarrow\qquad\phi^{-1}[O]\Subset\phi^{-1}[N].\]
Note that, for locally compact sober spaces, proximal just means \emph{proper}, i.e. preimages of compact saturated sets are again compact (see \cite[Lemma V-5.19]{GierzHofmannKeimelLawsonMisloveScott2003}).

Recall that a proper map $\psi:Z\rightarrow K$ with dense image $\psi[Z]$ has to be surjective if $K$ is locally compact and Hausdorff.  Indeed then any $k\in K$ has a relatively compact open neighbourhood $O$ so $\psi^{-1}[\mathrm{cl}(O)]$ is compact, by properness.  Then $\psi[\psi^{-1}[\mathrm{cl}(O)]]$ compact, by continuity, and hence closed, as $L$ is Hausdorff.  The density of $\psi[Z]$ then implies $\mathrm{cl}(O)=\psi[\psi^{-1}[\mathrm{cl}(O)]]$ and, in particular, $k\in\psi[Z]$.

As usual, we denote surjective maps by a double-headed arrow $\twoheadrightarrow$.  Also note below that we are not placing any additional assumptions on $Y$ beyond it being a very dense subspace of a stably locally compact space $Z$.

\begin{thm}\label{MinimalStabilisation}
If $X$ is core compact $T_0$ with subbasis $S$ and $Z$ is a stabilisation of $Y$, any proximal $\phi:Y\rightarrow X$ has a unique proximal extension $\psi:Z\twoheadrightarrow\mathsf{T}^S$, i.e.
\begin{equation}\label{psiy}
\psi(y)=S_{\phi(y)},\quad\text{for all }y\in Y.
\end{equation}
\end{thm}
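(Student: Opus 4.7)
The plan is to handle uniqueness first via the patch topology, then construct $\psi$ explicitly and verify tightness, continuity, and proximality.

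For uniqueness, I would note that, as remarked just before the theorem, any proximal map into a stably locally compact space is proper and hence patch-continuous (since it pulls compact saturated sets back to compact sets). Because $\mathsf{T}^S$ is stably locally compact its patch topology is Hausdorff, and $Y$ is patch-dense in $Z$ by the very density clause in the definition of a stabilisation. Two patch-continuous maps into a Hausdorff space that coincide on a dense subset must coincide everywhere, so \eqref{psiy} determines $\psi$ completely.

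For existence, I would define, for each $p \in S$, the open set
\[U_p = \bigcup\bigl\{V \in \mathsf{O}Z : V \cap Y \Subset \phi^{-1}[p] \text{ in } Y\bigr\} \in \mathsf{O}Z,\]
and set $\psi(z) = \{p \in S : z \in U_p\}$. Continuity of $\psi$ is then immediate from $\psi^{-1}[\mathsf{T}_p] = U_p$. To verify $\psi(y) = S_{\phi(y)}$ for $y \in Y$, the inclusion $\subseteq$ is immediate from $V \cap Y \subseteq \phi^{-1}[p]$, while for the reverse direction, $\cap$-roundness of the subbasis $S$ in $X$ (which holds automatically for any subbasis of a core compact space) together with proximality of $\phi$ gives, for $\phi(y) \in p$, some $F \in \mathsf{F}S$ with $\phi(y) \in \bigcap F \Subset p$ in $X$ and hence $\phi^{-1}[\bigcap F] \Subset \phi^{-1}[p]$ in $Y$; any $V \in \mathsf{O}Z$ with $V \cap Y = \phi^{-1}[\bigcap F]$ then witnesses $y \in U_p$.

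The heart of the proof is showing that each $\psi(z)$ is tight and that $\psi$ is proximal. Roundness of $\psi(z)$ follows from the directed cover $\phi^{-1}[p] = \bigcup\{\phi^{-1}[\bigcap F] : F \in \mathsf{F}S,\ \bigcap F \Subset p \text{ in } X\}$ (from $\cap$-roundness of $S$ and proximality of $\phi$): any witness $V$ for $p \in \psi(z)$ yields, after one interpolation of $\Subset$ in $Y$, an $F$ with $V \cap Y \Subset \phi^{-1}[\bigcap F]$ and $\bigcap F \Subset p$, and then each $f \in F$ lies in $\psi(z)$ by applying the same argument with $f$ in place of $p$, giving $F \vdash p$ with $F \subseteq \psi(z)$. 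Proximality of $\psi$ should then come from the cover-relation characterisation $\mathsf{T}_F \Subset \mathsf{T}^G \iff \bigcap F \Subset \bigcup G$ in $X$ (the converse direction of \eqref{FCGT}) combined with proximality of $\phi$. The main obstacle I expect is primality of $\psi(z)$: given $F \subseteq \psi(z)$ with $\bigcap F \Subset \bigcup G$ in $X$, combining witnesses $V_f$ into $V^* = \bigcap_{f \in F} V_f$ and applying proximality yields $V^* \cap Y \Subset \phi^{-1}[\bigcup G]$ in $Y$, but extracting a single $g \in G$ with $z \in U_g$ requires lifting the finite cover $\{\phi^{-1}[g]\}_{g \in G}$ of $V^* \cap Y$ to a covering of a shrinkage of $V^*$ about $z$ in $Z$. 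This is the delicate step where the very density of $Y$ in $Z$ must enter essentially, via patch density on the $Y$-side and core coherence of the stably locally compact $Z$ on the $Z$-side; it may also force a refinement of $U_p$ to a two-parameter family indexed by $F \in \mathsf{F}S$ with $\bigcap F \Subset p$.
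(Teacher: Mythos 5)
Your skeleton matches the paper's (an explicit local-witness definition of $\psi$, verification of \eqref{psiy}, tightness, continuity, proximality, and uniqueness via patch-continuity of proper maps on the patch-dense subspace $Y$), and the uniqueness argument is complete as you give it. But the proposal has a genuine gap precisely where you flag one: primality of $\psi(z)$ for $z\in Z\setminus Y$ is never proved, only described as the delicate step, with an admission that your definition of $U_p$ may need to be replaced. Since primality is the heart of showing $\psi(z)\in\mathsf{T}^S$, the proof is incomplete. For the record, the paper closes this step as follows. It puts $s\in\psi(z)$ iff there is $r\in\mathsf{O}Z$ with $z\in r$ and $\phi[r\cap Y]\Subset s$ \emph{in $X$} (forward images rather than your preimages in $Y$), so that all interpolation happens in the core compact space $X$ carrying the subbasis. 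Given $F\subseteq\psi(z)$ with $\bigcap F\Subset\bigcup G$, it combines witnesses into a single $r$ with $\phi[r\cap Y]\subseteq O=\bigcap F$, interpolates in $X$ to get $\mathcal{H}\in\mathsf{FF}S$ with $O\Subset\bigcup_{H\in\mathcal{H}}\bigcap H$ and $\mathcal{H}\vdash_{1\exists}G$, and then, for \emph{each selection} $I\in\mathcal{H}_\between$, uses proximality of $\phi$ to get $r\cap Y\Subset\bigcup_{i\in I}q_i$ (where $q_i\cap Y=\phi^{-1}[i]$) and patch-density of $Y$ to conclude $z\in r\subseteq\mathrm{cl}_\square(r\cap Y)\subseteq\bigcup_{i\in I}q_i$, so some $i\in I$ is witnessed at $z$. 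A diagonal argument over $\mathcal{H}_\between$ then produces a single $H\in\mathcal{H}$ all of whose members are witnessed at $z$, and $\bigcap H\Subset g$ for some $g\in G$ gives $g\in\psi(z)$. This is exactly the ``lifting of the finite cover'' you could not perform: the covering combinatorics are done in $X$ via $\mathcal{H}$ and its selections, and very density is used only to transport one selection's cover from $Y$ up to the point $z$.

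Two further omissions. You never show $\psi(z)\neq\emptyset$, which is required since points of $\mathsf{T}^S$ are non-empty tight sets; the paper uses the saturation half of very density ($Z=Y^\leftarrow$ yields $y$ with $z\rightarrow y$, whence $\psi(z)\supseteq\psi(y)=S_{\phi(y)}\neq\emptyset$), an ingredient absent from your sketch. And your roundness argument invokes ``one interpolation of $\Subset$ in $Y$'', but $Y$ is not assumed core compact, so this interpolation needs justification -- another reason the paper keeps its witnesses in the form $\phi[r\cap Y]\Subset s$ computed in $X$, where core compactness and $\cap$-roundness of $S$ are available. Proximality of $\psi$ is likewise only gestured at; the paper's verification again needs patch-density (to get $\psi^{-1}[\mathsf{T}_F]\subseteq\mathrm{cl}_\square\phi^{-1}[\bigcap F]$) and core coherence of $Z$, neither of which appears in your outline of that step.
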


\begin{proof}
Let $R=\mathsf{O}Z$.  For each $z\in Z$, define
\begin{align*}
\psi(z)&=\{s\in S:\exists r\in R_z\ (\phi[r\cap Y]\Subset s)\}.
\end{align*}
First we prove \eqref{psiy}.  If $s\in\psi(y)$ then we have $r\in R_y$ with $\phi[r\cap Y]\Subset s$ so, in particular, $\phi(y)\in s$, showing that $\psi(y)\subseteq S_{\phi(y)}$.  Conversely, if $\phi(y)\in s\in S$ then core compactness yields $O\in\mathsf{O}X$ with $\phi(y)\in O\Subset s$.  As $\phi$ is continuous, $\phi^{-1}[O]$ is open in $Y$ so we have $r\in R$ with $r\cap Y=\phi^{-1}[O]$.  It follows that $y\in r\cap Y$ and $\phi[r\cap Y]\subseteq O\Subset s$ and hence $s\in\psi(y)$, showing that $S_{\phi(y)}\subseteq\psi(y)$, as required.

Next we claim $\psi(z)\in\mathsf{T}^S$, for all $z\in Z$.  To prove \eqref{Prime}, fix $z\in Z$ and take $F\in\mathsf{F}(\psi(z))$ and $G\in\mathsf{F}S$ with $\bigcap F\Subset\bigcup G$.  This means, for each $f\in F$, we have $r_f\in R_z$ with $\phi[r_f\cap Y]\Subset f$.  Setting $r=\bigcap_{f\in F}r_f\in R$ and $O=\bigcap F\in\mathsf{O}X$, it follows that $z\in r$ and $\phi[r\cap Y]\subseteq O\Subset\bigcup G$.  Note that $r\cap Y\neq\emptyset$, as $Y$ is (patch-)dense in $Z$, so $O\neq\emptyset$ and hence $G\neq\emptyset$.

Take $\mathcal{H}\in\mathsf{F}\mathsf{F}S$ with $O\Subset\bigcup_{H\in\mathcal{H}}\bigcap H$ and $\mathcal{H}\vdash_{1\exists}G$, i.e. for all $H\in\mathcal{H}$, we have $g\in G$ with $\bigcap H\Subset g$.  For every $I\in\mathcal{H}_\between$, it follows that $O\Subset\bigcup I$ and hence $r\cap Y\subseteq\phi^{-1}[O]\Subset\phi^{-1}[\bigcup I]$, as $\phi$ is proximal.  For each $i\in I$, note $\phi^{-1}[i]$ is open in $Y$, as $\phi$ is continuous, so we have $q_i\in R$ with $q_i\cap Y=\phi^{-1}[i]$ and hence $r\cap Y\Subset\bigcup_{i\in I}q_i$.  As $Y$ is patch-dense in $Z$, it follows that $z\in r\subseteq\mathrm{cl}_\square(r\cap Y)\subseteq\bigcup_{i\in I}q_i$, so we must have $i\in I$ with $z\in q_i$ and $\phi[q_i\cap Y]=\phi[\phi^{-1}[i]]\subseteq i$.  As this holds for all $I\in\mathcal{H}_\between$, we must have $H\in\mathcal{H}$ such that, for every $h\in H$, we have $q_h\in R_z$ with $\phi[q_h\cap Y]\subseteq h$.  Setting $q=\bigcap_{h\in H}q_h\in R_z$, we see that $\phi[q\cap Y]\subseteq\bigcap H\Subset g$, for some $g\in G$, i.e. $g\in\phi(z)$.  This shows $\phi(z)$ is prime.

Now say $s\in\phi(z)$, so we have $r\in R_z$ with $\phi[r\cap Y]\Subset s$.  As $X$ is core compact, we have $\mathcal{F}\in\mathsf{FF}S$ with $\phi[r\cap Y]\Subset\bigcup_{F\in\mathcal{F}}\bigcap F\Subset s$.  The argument above then shows that $F\subseteq\phi(z)$, for some $F\in\mathcal{F}$.  As $\bigcap F\Subset s$, this shows that $\phi(z)$ is round and hence tight.  As $Z=Y^\leftarrow$, we have $y\in Y$ with $z\rightarrow y$ and hence $\psi(z)\supseteq\psi(y)=S_{\phi(y)}\neq\emptyset$.  Thus $\phi(z)\neq\emptyset$ and hence $\phi(z)\in\mathsf{T}^S$.

For continuity just note that if $\psi(z)\in\mathsf{T}_s$, i.e. $s\in\psi(z)$, then we have $r\in R_z$ with $\phi[r\cap Y]\Subset s$ and hence $z\in r\subseteq\psi^{-1}[s]$.  Next note that it suffices to verify proximality on a basis.  Accordingly, say $\mathsf{T}_F\Subset\mathsf{T}_G$, i.e. $\bigcap F\Subset g$, for all $g\in G$.  As $\phi$ is proximal, $\phi^{-1}[\bigcap F]\Subset\phi^{-1}[g]\subseteq\psi^{-1}[\mathsf{T}_g]$ and, again by patch-density, $\psi^{-1}[\mathsf{T}_F]\subseteq\mathrm{cl}_\square\phi^{-1}[\bigcap F]\Subset\psi^{-1}[\mathsf{T}_g]$.  As $Z$ is core coherent, it follows that $\psi^{-1}[\mathsf{T}_F]\Subset\bigcap_{g\in G}\psi^{-1}[\mathsf{T}_g]=\psi^{-1}[\mathsf{T}_G]$, showing that $\psi$ is proximal.  Further note that there can be no other proximal extension, as proximal maps between stably compact spaces are proper and hence patch-continuous, which means they are uniquely defined by their values on any patch-dense subspace.  As noted above, this also implies that $\psi$ is surjective, as $\mathsf{T}^S$ in its patch topology is locally compact and Hausdorff with (patch-)dense subspace $(S_x)_{x\in X}$.
\end{proof}

It follows that $\mathsf{T}^S$ is the unique minimal stabilisation of $X$, up to homeomorphism (and, in particular, does not depend on the chosen subbasis $S$).  Indeed, if $X'$ is another stabilisation that is minimal (in that it is universal for proximal maps as above) then we would have proximal maps from $X'$ to $\mathsf{T}^S$ and vice versa which reduce to the identity on $X$ and are thus homeomorphisms.

\begin{xpl}
Let $X=\mathbb{Z}\cup\{\omega,\omega'\}$ with the subbasis
\[S=\{\{n\}:n\in\mathbb{Z}\}\cup\{X\setminus\{x\}:x\in X\}.\]
So $X$ is the `two-point compactification' of $\mathbb{N}$, which is compact, locally compact, locally Hausdorff (in particular, sober) but not stable, as $X\setminus\{\omega\}$ and $X\setminus\{\omega'\}$ are compact open sets with non-compact intersection $\mathbb{N}=X\setminus\{\omega\}\cap X\setminus\{\omega'\}$.  One can check that the spectrum $\mathsf{T}^S$ consists of the points $S_x$ coming from $x\in X$ together with one additional tight subset, namely
\[T=S_{\omega}\cup S_{\omega'}=\{X\setminus\{x\}:x\in X\},\]
which is just enough to make the resulting space stably locally compact.

For an alternative `two-point stabilisation' of $X$, we can consider the larger space $X'=X\cup\{+\infty,-\infty\}$ with the subbasis
\[S'=\{\{n\}:n\in\mathbb{Z}\}\cup\{X'\setminus\{x\}:x\in X\}\cup\{\mathbb{Z}_+\cup\{+\infty\}\}\cup\{\mathbb{Z}_-\cup\{-\infty\}\}.\]
By \autoref{MinimalStabilisation}, we have a unique proximal map $\phi':X'\rightarrow\mathsf{T}^S$ with $\phi'(x)=S_x$, for all $x\in X$.  One can check that $\phi'$ maps both $+\infty$ and $-\infty$ to $T=S_{\omega}\cup S_{\omega'}$.
\end{xpl}

\section{Categorical Duality}\label{Functoriality}

\subsection{The Categories}

Now we want to make the duality in \autoref{ACovSSub} functorial with respect to appropriate morphisms.  On the topological side, we consider partial continuous maps with open domains, i.e. given $(R,Y),(S,X)\in\mathbf{Sub}$, we let
\[\mathbf{Sub}((R,Y),(S,X))=\{\phi\subseteq Y\times X:\phi\text{ is a continuous function on }\mathrm{dom}(\phi)\in\mathsf{O}X\}\]
(as mentioned at the beginning of \autoref{Preliminaries}, we take the domain to correspond to the right side of the Cartesian product, while the range corresponds to the left side -- this ensures that composition of functions in the usual order is given by $\circ$).  More explicitly, a function $\phi\subseteq Y\times X$ is a member of $\mathbf{Sub}((R,Y),(S,X))$ when
\[\phi(x)\in r\in R\qquad\Rightarrow\qquad\exists F\in\mathsf{F}S\ (x\in\bigcap F\subseteq\phi^{-1}[r]).\]
To simplify notation, we will freely abbreviate $\mathbf{Sub}((R,Y),(S,X))$ to $\mathbf{Sub}(R,S)$ or $\mathbf{Sub}(Y,X)$ whenever there is no risk of confusion.

If $\phi$ and $\pi$ are maps defined on $\mathrm{dom}(\phi)$ and $\mathrm{dom}(\pi)$ respectively then their composition $\phi\circ\pi$ is a map defined to be $\phi(\pi(x))$ wherever possible, i.e. for all
\[x\in\mathrm{dom}(\phi\circ\pi)=\{x\in\mathrm{dom}(\pi):\pi(x)\in\mathrm{dom}(\phi)\}.\]
It follows $\phi\circ\pi$ is continuous if $\phi$ and $\pi$ are, i.e. for any $(Q,Z),(R,Y),(S,X)\in\mathbf{Sub}$,
\[\mathbf{Sub}(Q,R)\circ\mathbf{Sub}(R,S)\subseteq\mathbf{Sub}(Q,S).\]
Thus these form the morphisms of a category which we again denote by $\mathbf{Sub}$ (where the identities in the category are the identity maps on the spaces).  Moreover, any $\phi\in\mathbf{Sub}(R,S)$ yields a relation $\sqsubset_\phi\ \subseteq\mathsf{F}S\times\mathsf{F}R$ given by
\[F\sqsubset_\phi G\qquad\Leftrightarrow\qquad\bigcap F\Subset\phi^{-1}[\bigcup G].\]
To get appropriate morphisms for $\mathbf{Cov}$, we need to axiomatise such relations.

\begin{dfn}
For any $(R,\vDash),(S,\vdash)\in\mathbf{Cov}$, let
\[\mathbf{Cov}((S,\vdash),(R,\vDash))=\{\sqsubset\ \subseteq\mathsf{F}S\times\mathsf{F}R:\ \sqsubset\ =\ \sqsubset\bullet\vDash\ =\ \vdash\bullet\sqsubset_{1\exists}\}.\]
We call $\sqsubset\ \in\mathbf{Cov}((S,\vdash),(R,\vDash))$ a \emph{cover morphism} from $(R,\vDash)$ to $(S,\vdash)$.
\end{dfn}

Again we will freely write $\mathbf{Cov}((S,\vdash),(R,\vDash))$ as $\mathbf{Cov}(S,R)$ or $\mathbf{Cov}(\vdash,\vDash)$ whenever there is no risk of confusion and refer to any $\sqsubset\ \in\mathbf{Cov}((S,\vdash),(R,\vDash))$ as a cover morphism `from $R$ to $S$' or `from $\vDash$ to $\vdash$'.  Note that every cover morphism $\sqsubset$ is monotone, indeed $\sqsubset\ =\ \vdash\bullet\sqsubset_{1\exists}$ is lower because $\vdash$ is lower and upper because $\sqsubset_{1\exists}$ is upper.  Also, as $\vdash$ and $\vDash$ are strong idempotents, we see that
\begin{align*}
\sqsubset\ &=\ \ \sqsubset\bullet\vDash\ \ =\ \ \sqsubset\bullet\vDash\bullet\vDash_{1\exists}\ \ =\ \ \sqsubset\bullet\vDash_{1\exists}\\
&=\ \ \vdash\bullet\sqsubset_{1\exists}\ \ =\ \ \vdash\bullet\vdash\bullet\sqsubset_{1\exists}\ \ =\ \ \vdash\bullet\sqsubset.
\end{align*}
In particular, every cover morphism is a \emph{Karoubi morphism}, i.e.
\[\tag{Karoubi Morphism}\label{Karoubi}\sqsubset\ =\ \sqsubset\bullet\vDash\ =\ \vdash\bullet\sqsubset.\]

\begin{rmk}\label{VickersKaroubi}
Instead of cover morphisms, Vickers considered Karoubi morphisms.  However these only correspond to preframe homomorphisms, and hence to certain closed relations (at least in the compact case -- see \cite{JungKegelmannMoshier2001}).  To get bona fide continuous maps, as in \autoref{Sp} below, we really do need these more restrictive cover morphisms, as also noted in \cite[Theorem 42]{Vickers2004} and \cite[Theorem 5.17]{Kawai2020}.
\end{rmk}

\begin{prp}
Cover morphisms are closed under cut-composition.
\end{prp}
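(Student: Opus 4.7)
The plan is to verify both defining identities of a cover morphism for the cut-composition $\sqsubset = \sqsubset_2 \bullet \sqsubset_1$, where $\sqsubset_1 \in \mathbf{Cov}((S,\vdash),(R,\vDash))$ and $\sqsubset_2 \in \mathbf{Cov}((T,\vDdash),(S,\vdash))$. That is, I would show $\sqsubset = \sqsubset \bullet \vDash$ and $\sqsubset = \vDdash \bullet \sqsubset_{1\exists}$.

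The first identity is immediate from associativity of cut-composition on monotone relations (\autoref{Composition}) together with $\sqsubset_1 \bullet \vDash = \sqsubset_1$: indeed $(\sqsubset_2 \bullet \sqsubset_1) \bullet \vDash = \sqsubset_2 \bullet (\sqsubset_1 \bullet \vDash) = \sqsubset_2 \bullet \sqsubset_1$. For the second identity I would split into two inclusions. Since cut-composition preserves monotonicity of its factors, $\sqsubset$ is upper and hence $\sqsubset_{1\exists} \subseteq \sqsubset$; together with the derived Karoubi identity $\vDdash \bullet \sqsubset_2 = \sqsubset_2$ from just above the statement, monotonicity of $\bullet$ yields the $\supseteq$ direction: $\vDdash \bullet \sqsubset_{1\exists} \subseteq \vDdash \bullet \sqsubset_2 \bullet \sqsubset_1 = \sqsubset_2 \bullet \sqsubset_1 = \sqsubset$.

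For the reverse inclusion, I would chain the defining and derived identities for $\sqsubset_1$ and $\sqsubset_2$: unfold $\sqsubset_1 = \vdash \bullet (\sqsubset_1)_{1\exists}$, then invoke the Karoubi identity $\sqsubset_2 \bullet \vdash = \sqsubset_2$ to absorb the middle $\vdash$, and finally unfold $\sqsubset_2 = \vDdash \bullet (\sqsubset_2)_{1\exists}$, arriving at $\sqsubset = \vDdash \bullet (\sqsubset_2)_{1\exists} \bullet (\sqsubset_1)_{1\exists}$. Applying \autoref{1existsdiamond} now gives $(\sqsubset_2)_{1\exists} \bullet (\sqsubset_1)_{1\exists} \subseteq \sqsubset_{1\exists}$, and monotonicity of $\bullet$ closes the argument. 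The only nontrivial ingredient beyond associativity, monotonicity, and the Karoubi-type identities is \autoref{1existsdiamond}, which is precisely the tool needed to commute the $_{1\exists}$ reduction past cut-composition; the rest is bookkeeping.
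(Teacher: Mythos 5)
Your proof is correct and follows essentially the same route as the paper's: both arguments rest on associativity of cut-composition for monotone relations, the derived Karoubi identities, upperness of the composite (giving $\sqsubset_{1\exists}\ \subseteq\ \sqsubset$), and \autoref{1existsdiamond} to push $_{1\exists}$ past the cut-composition. The only cosmetic difference is that the paper packages the second identity as a single sandwich chain $\sqsubset\bullet\sqin\ \subseteq\ \vdash\bullet(\sqsubset\bullet\sqin)_{1\exists}\ \subseteq\ \sqsubset\bullet\sqin$, whereas you prove the two inclusions separately.
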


\begin{proof}
Say we have cover systems $(S,\vdash),(R,\vDash),(Q,\vDdash)\in\mathbf{Cov}$ and cover morphisms $\sqsubset\ \in\mathbf{Cov}(S,R)$ and $\sqin\ \in\mathbf{Cov}(R,Q)$.  As $\sqin\ =\ \sqin\bullet\vDdash$, we immediately see that $\sqsubset\bullet\sqin\ =\ \sqsubset\bullet\sqin\bullet\vDdash$.  On the other hand, \autoref{1existsdiamond} yields
\[\sqsubset\bullet\sqin\ \ =\ \ \sqsubset\bullet\vDash\bullet\sqin_{1\exists}\ \ =\ \ \vdash\bullet\sqsubset_{1\exists}\bullet\sqin_{1\exists}\ \ \subseteq\ \ \vdash\bullet\mathrel{(\sqsubset\bullet\sqin)_{1\exists}}\ \ \subseteq\ \ \vdash\bullet\sqsubset\bullet\sqin\ \ \subseteq\ \ \sqsubset\bullet\sqin.\]
This shows that $\sqsubset\bullet\sqin\ \in\mathbf{Cov}(S,Q)$.
\end{proof}

Thus cover morphisms under cut-composition turn the the cover systems into a category, which we again denote by $\mathbf{Cov}$ (where the cover relations themselves are the identity morphisms).

\subsection{The Functors}

We consider $\mathbf{SSub}$ as a full subcategory of $\mathbf{Sub}$.

\begin{thm}\label{Ab}
We have a contravariant functor $\mathsf{Ab}:\mathbf{SSub}\rightarrow\mathbf{Cov}$ given by
\[\mathsf{Ab}(S,X)=(S,\vdash_\Subset)\qquad\text{and}\qquad\mathsf{Ab}(\phi)=\ \sqsubset_\phi.\]
\end{thm}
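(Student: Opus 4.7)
Three things need verifying: well-definedness on objects, well-definedness on morphisms, and the contravariant functor equations for identity and composition.

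For objects, any subbasis $S$ of a core compact space $X$ is automatically $\cap$-round---given $x\in p\in S$, core compactness supplies an open $q$ with $x\in q\Subset p$, and $S$ generating the topology supplies $F\in\mathsf{F}S$ with $x\in\bigcap F\subseteq q$, whence $\bigcap F\Subset p$---so \autoref{SubsetCex} gives $(S,\vdash_\Subset)\in\mathbf{Cov}$. The identity morphism is trivially preserved: $\sqsubset_{\mathrm{id}_X}=\vdash_\Subset$ since $\mathrm{id}_X^{-1}[\bigcup G]=\bigcup G$.

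The main content is checking $\sqsubset_\phi=\sqsubset_\phi\bullet\vDash=\vdash\bullet\sqsubset_{\phi,1\exists}$ for $\phi\in\mathbf{Sub}((R,Y),(S,X))$, where $\vdash=\vdash^X_\Subset$ and $\vDash=\vdash^Y_\Subset$. The ``contracting'' inclusions $\sqsubset_\phi\bullet\vDash\subseteq\sqsubset_\phi$ and $\vdash\bullet\sqsubset_{\phi,1\exists}\subseteq\sqsubset_\phi$ are handled uniformly: starting from standard-form witnesses $\mathcal{F}\bowtie\mathcal{H}$, core coherence of $X$ aggregates the finitely many $\Subset$-relations indexed by $\mathcal{F}$ to give $\bigcap F\Subset\phi^{-1}\big[\bigcap_{G\in\mathcal{F}}\bigcup G\big]=\phi^{-1}\big[\bigcup_{K\in\mathcal{F}_\between}\bigcap K\big]$, and the $\bowtie$-hypothesis forces each $K\in\mathcal{F}_\between$ to contain some $H'\in\mathcal{H}$ whose defining relation places $\bigcap K$ inside $\bigcup H$ (or $\phi^{-1}[\bigcup H]$) as appropriate, yielding $\bigcap F\Subset\phi^{-1}[\bigcup H]$.

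For the ``divisibility'' inclusions $\sqsubset_\phi\subseteq\sqsubset_\phi\bullet\vDash$ and $\sqsubset_\phi\subseteq\vdash\bullet\sqsubset_{\phi,1\exists}$, the plan is to combine $\cap$-roundness with $\Subset$-interpolation. Given $\bigcap F\Subset\phi^{-1}[\bigcup H]$, interpolate $\bigcap F\Subset V\Subset\phi^{-1}[\bigcup H]$ in $X$, and then for each $y\in\bigcup H$ (respectively each $x\in V$) use $\cap$-roundness of $R$ (respectively $S$) plus core coherence of $Y$ (respectively $X$) to produce a finite subbasic witness $H_y\in\mathsf{F}R$ with $y\in\bigcap H_y\Subset\bigcup H$ (respectively $F'_x\in\mathsf{F}S$ with $x\in\bigcap F'_x\Subset\phi^{-1}[h_x]$ for some $h_x\in H$). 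Compact containment of $V$ extracts a finite subfamily, and the resulting $\mathcal{G}=\{H_{y_i}\}_{i=1}^n$ (respectively $\{F'_{x_i}\}_{i=1}^n$) verifies the single-family form of the cut-composition (available because $\sqsubset_\phi$ and $\vdash$ are upper), via the observation that every $K\in\mathcal{G}_\between$ picks up one element from each $G\in\mathcal{G}$, forcing $\bigcup K\supseteq\bigcup_i\bigcap G_i$ and hence $\bigcap F\Subset\phi^{-1}[\bigcup K]$ (or $\bigcap F\Subset\bigcup K$).

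For composition, the contravariant functor equation $\mathsf{Ab}(\psi\circ\phi)=\mathsf{Ab}(\phi)\bullet\mathsf{Ab}(\psi)$ reduces to $\sqsubset_{\psi\circ\phi}=\sqsubset_\phi\bullet\sqsubset_\psi$ for composable $\phi\in\mathbf{Sub}((R,Y),(S,X))$ and $\psi\in\mathbf{Sub}((Q,Z),(R,Y))$. Both directions mirror the morphism-well-definedness arguments: $\supseteq$ is another aggregation via core coherence of $X$ combined with $\mathcal{F}\bowtie\mathcal{H}$, reducing to $\bigcap F\Subset(\psi\circ\phi)^{-1}[\bigcup H]$; $\subseteq$ applies the same $\cap$-roundness-plus-interpolation construction to $\psi^{-1}[\bigcup H]\subseteq Y$ via the subbasis $R$, producing a finite family $\mathcal{G}\in\mathsf{FF}R$ of $H_y$'s with $H_y\sqsubset_\psi H$ and whose $\phi$-preimages compactly cover $\bigcap F$. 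The main technical obstacle throughout is precisely the combination of $\cap$-roundness with $\Subset$-interpolation in stably locally compact spaces, which is what allows an arbitrary preimage $\phi^{-1}[\bigcup H]$ to be refined into a finite cover by subbasic finite-intersection witnesses; once that extraction is in place, the rest is routine bookkeeping with the selection identity $\bigcap_{G\in\mathcal{G}}\bigcup G=\bigcup_{K\in\mathcal{G}_\between}\bigcap K$ and standard cut-composition manipulations.
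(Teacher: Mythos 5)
Your overall strategy is sound and runs parallel to the paper's: objects and identities are handled the same way, the ``contracting'' inclusions by aggregating finitely many $\Subset$'s (the paper only needs core coherence for $\sqsubset_\phi\bullet\vDash\ \subseteq\ \sqsubset_\phi$, as in \eqref{RightAux}; for $\vdash\bullet\sqsubset_{\phi1\exists}$ the single $\Subset$ at the end already suffices, but your uniform use of core coherence is harmless), and the ``divisibility'' inclusions by refining a preimage into finitely many subbasic finite-intersection witnesses. One genuine difference of route: for $\sqsubset_\phi\ \subseteq\ \sqsubset_\phi\bullet\vDash$ the paper pushes the interpolant forward ($\phi[O]\Subset\bigcup G$, using that continuous maps preserve $\Subset$ on the domain) and covers in the codomain, whereas you stay with preimages throughout; your version is fine and arguably avoids one lemma.

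There is, however, a concrete slip in the divisibility step $\sqsubset_\phi\ \subseteq\ \vdash\bullet\sqsubset_{\phi,1\exists}$. You build the witnesses $F'_x$ only for $x\in V$, so the family $\{\bigcap F'_x\}$ is an open cover of $V$ itself, and the only compact containment available to extract a finite subfamily from a cover of $V$ is $\bigcap F\Subset V$. That extraction yields merely $\bigcap F\subseteq\bigcup_i\bigcap F'_{x_i}\subseteq\bigcup K$ for $K\in\mathcal{H}_\between$, whereas $F\vdash\mathcal{H}_\between$ demands $\bigcap F\Subset\bigcup K$; a plain inclusion into a selection-union does not upgrade itself to compact containment. (Your first case does not suffer from this, because there the cover is of the \emph{outer} set $\phi^{-1}[\bigcup H]$ and the extraction is performed against $V\Subset\phi^{-1}[\bigcup H]$, after which $\bigcap F\Subset V\subseteq\bigcup_i(\cdots)$ delivers the $\Subset$.) The repair is easy and you have both tools already on the table: either index the cover by all $x\in\phi^{-1}[\bigcup H]$ and extract a finite subcover of the interpolant $V$, or interpolate twice, or do as the paper does and insert a second subbasic layer $\bigcap I_H\Subset\bigcap H\Subset\phi^{-1}[g]$ so that the finite union of the inner sets is compactly contained in the finite union of the outer ones. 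With that adjustment the remaining bookkeeping, including the composition identity $\sqsubset_{\psi\circ\phi}\ =\ \sqsubset_\phi\bullet\sqsubset_\psi$, goes through as you describe.
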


\begin{proof}
We already saw in \autoref{SubsetCex} that $(S,\vdash_\Subset)\in\mathbf{Cov}$ when $(S,X)\in\mathbf{Sub}$.  We also immediately see that $\sqsubset_\mathrm{id}\ =\ \vdash_\Subset$, i.e.  $\mathsf{Ab}$ preserves identity morphisms.  Next, given $\phi\in\mathbf{SSub}((S,X),(R,Y))$, we must show that $\sqsubset_\phi\ \in\mathbf{Cov}(R,S)$.

If $F\vdash_\Subset\mathcal{H}\bowtie\mathcal{I}\sqsubset_{\phi1\exists}G$ or even $\mathcal{I}\sqsubset_\phi G$ then
\[\bigcap F\subseteq\bigcap_{H\in\mathcal{H}}\bigcup H\subseteq\bigcup_{I\in\mathcal{I}}\bigcap I\Subset\phi^{-1}[\bigcup G]\]
and so $F\sqsubset_\phi G$.  Conversely, if $F\sqsubset_\phi G$ then, as $R$ is a subbasis of the core compact space $Y$, we can cover $\phi^{-1}[\bigcup G]$ with sets $\bigcap I_H$ where $\bigcap I_H\Subset\bigcap H\Subset\phi^{-1}[g]$, for some $g\in G$, and $H,I_H\in\mathsf{F}R$.  We then have a finite subcover of $\bigcap F$, i.e. we have $\mathcal{H}\in\mathsf{FF}R$ such that
\[\bigcap F\subseteq\bigcup_{H\in\mathcal{H}}\bigcap I_H\Subset\bigcup_{H\in\mathcal{H}}\bigcap H=\bigcap_{J\in\mathcal{H}_\between}\bigcup J,\]
and, for all $H\in\mathcal{H}$, we have $g\in G$ with $\bigcap H\Subset\phi^{-1}[g]$, i.e. $F\vdash_{\Subset\between}\mathcal{H}\sqsubset_{\phi1\exists}G$, showing that $\sqsubset_\phi\ =\ \vdash_\Subset\bullet\sqsubset_{\phi1\exists}$.

On the other hand, if $F\sqsubset_\phi\mathcal{H}\bowtie\mathcal{I}\vdash_\Subset G$ then, by core coherence,
\begin{equation}\label{RightAux}
\bigcap F\Subset\bigcap_{H\in\mathcal{H}}\phi^{-1}[\bigcup H]\subseteq\phi^{-1}[\bigcup_{I\in\mathcal{I}}\bigcap I]\subseteq\phi^{-1}[\bigcup G]
\end{equation}
and so $F\sqsubset_\phi G$.  Conversely, if $F\sqsubset_\phi G$ then, as $Y$ is core compact, we can take open $O\subseteq Y$ with $\bigcap F\Subset O\Subset\phi^{-1}[\bigcup G]$.  As $\phi$ is continuous and hence respects $\Subset$ on $\mathrm{dom}(\phi)$, it follows that $\phi[O]\Subset\phi[\phi^{-1}[\bigcup G]]\subseteq\bigcup G$.  As $S$ is a subbasis of the core compact space $X$, as above we have $\mathcal{H}\in\mathsf{F}\mathsf{F}S$ such that
\[\phi[O]\Subset\bigcup_{H\in\mathcal{H}}\bigcap H=\bigcap_{I\in\mathcal{H}_\between}\bigcup I\]
and, for all $H\in\mathcal{H}$, we have $g\in G$ with $\bigcap H\Subset g$ so $\mathcal{H}\vdash_\Subset G$.  Thus
\[\bigcap F\Subset O\subseteq\phi^{-1}[\phi[O]]\subseteq\phi^{-1}[\bigcap_{I\in\mathcal{H}_\between}\bigcup I]=\bigcap_{I\in\mathcal{H}_\between}\phi^{-1}[\bigcup I],\]
i.e. $F\sqsubset_\phi\mathcal{H}_\between$, showing that $\sqsubset_\phi\ =\ \sqsubset_\phi\bullet\vdash_\Subset$ and thus $\sqsubset_\phi\ \in\mathbf{Cov}(R,S)$.

Now we just have to show that $\mathsf{Ab}$ turns composition of continuous partial functions into cut-composition of the corresponding cover morphisms.  Accordingly, say $\phi\in\mathbf{SSub}((S,X),(R,Y))$ and $\psi\in\mathbf{Sub}((R,Y),(Q,Z))$.  If $F\sqsubset_{\psi\circ\phi}H$ then
\[\bigcap F\Subset(\psi\circ\phi)^{-1}[\bigcup H]=\phi^{-1}[\psi^{-1}[\bigcup H]].\]
Taking open $O\subseteq Z$ with $\bigcap F\Subset O\Subset\phi^{-1}[\psi^{-1}[\bigcup H]]$, we see that $\phi[O]\Subset\psi^{-1}[\bigcup H]$.  We then get $\mathcal{G}\in\mathsf{F}\mathsf{F}R$ with $\phi[O]\subseteq\bigcup_{G\in\mathcal{G}}\bigcap G\Subset\psi^{-1}[\bigcup H]$ so $F\sqsubset_{\phi\between}\mathcal{G}\sqsubset_\psi H$ and hence $F\sqsubset_\phi\bullet\sqsubset_\psi H$.

Conversely, say $F\sqsubset_\phi\bullet\sqsubset_{\psi}H$, so we have $\mathcal{G}\in\mathsf{FF}R$ with $F\sqsubset_{\phi\between}\mathcal{G}\sqsubset_{\psi}H$.  The first expression means $\bigcap F\Subset\phi^{-1}[\bigcup G]$, for all $G\in\mathcal{G}_\between$ so
\begin{equation}\label{circPreserving}
\bigcap F\Subset\bigcap_{G\in\mathcal{G}_\between}\phi^{-1}[\bigcup G]=\phi^{-1}[\bigcup_{G\in\mathcal{G}}\bigcap G]\subseteq\phi^{-1}[\psi^{-1}[\bigcup H]],
\end{equation}
again by core coherence.  Thus $F\sqsubset_{\psi\circ\phi}H$, showing that $\sqsubset_{\psi\circ\phi}\ =\ \sqsubset_\phi\bullet\sqsubset_{\psi}$.
\end{proof}

We can extend the above functor to core compact spaces if we restrict to proximal maps.  Accordingly, let $\mathbf{Sub_P}$ denote the wide subcategory of $\mathbf{Sub}$ consisting of proximal maps.  Given $\sqsubset\ \in\mathbf{Cov}((S,\vdash),(R,\vDash))$, let us also define $\sqsubseteq\ \subseteq\mathsf{F}R\times\mathsf{F}S$ by
\[F\sqsubseteq G\qquad\Leftrightarrow\qquad\forall H\in\mathsf{F}S\ (H\sqsubset\mathsf{1}F\ \Rightarrow\ H\vdash G).\]
We will show that proximal maps correspond to `proper' cover morphisms.

\begin{dfn}
We call $\sqsubset\ \in\mathbf{Cov}((S,\vdash),(R,\vDash))$ \emph{proper} if
\[\tag{Proper}\vDash\ \ \subseteq\ \ \sqsubseteq\bullet\sqsubset.\]
\end{dfn}

Let $\mathbf{Cov_P}$ denote the wide subcategory of $\mathbf{Cov}$ consisting of proper morphisms.

\begin{thm}
We have a contravariant functor $\mathsf{Ab_P}:\mathbf{Sub_P}\rightarrow\mathbf{Cov_P}$ given by
\[\mathsf{Ab_P}(S,X)=(S,\vdash_\Subset)\qquad\text{and}\qquad\mathsf{Ab_P}(\phi)=\ \sqsubset_\phi.\]
\end{thm}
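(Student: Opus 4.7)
The strategy is to closely mirror the proof of \autoref{Ab}, with two adaptations: (i) every use of core coherence of $Y$ is replaced by proximality of $\phi$, and (ii) we additionally verify that $\sqsubset_\phi$ is proper. The fact that $(S, \vdash_\Subset) \in \mathbf{Cov}$ for $(S, X) \in \mathbf{Sub}$ is immediate from \autoref{SubsetCex}, since any subbasis of a core compact space is $\cap$-round; the identity map is trivially proximal with $\sqsubset_\mathrm{id} = \vdash_\Subset$.

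To verify $\sqsubset_\phi$ is a cover morphism, both inclusions for $\sqsubset_\phi = \vdash_\Subset \bullet \sqsubset_{\phi 1\exists}$ and the inclusion $\sqsubset_\phi \subseteq \sqsubset_\phi \bullet \vdash_\Subset$ go through as in \autoref{Ab}, since those steps relied only on core compactness of $Y$, $\cap$-roundness of $R$, continuity of $\phi$ (the remark ``$\phi$ respects $\Subset$'' reduces to pulling back open covers of $\bigcup G$, requiring no sobriety), and distributivity. The lone step using core coherence of $Y$ was the direction $\sqsubset_\phi \bullet \vdash_\Subset \subseteq \sqsubset_\phi$; here I would replace it with proximality. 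Given $F \sqsubset_\phi \mathcal{H} \bowtie \mathcal{I} \vdash_\Subset G$, for each $I \in \mathcal{I}$ we have $\bigcap I \Subset \bigcup G$ in $X$, so the finite union satisfies $\bigcup_{I \in \mathcal{I}} \bigcap I \Subset \bigcup G$ in $X$. Proximality of $\phi$ then yields $\phi^{-1}\bigl[\bigcup_I \bigcap I\bigr] \Subset \phi^{-1}[\bigcup G]$ in $Y$. Combined with $\bigcap F \subseteq \bigcap_{H \in \mathcal{H}} \phi^{-1}[\bigcup H] = \phi^{-1}\bigl[\bigcup_{I \in \mathcal{I}} \bigcap I\bigr]$ (via distributivity and $\mathcal{H} \bowtie \mathcal{I}$), this gives $\bigcap F \Subset \phi^{-1}[\bigcup G]$, i.e.\ $F \sqsubset_\phi G$.

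For properness, assume $A \vdash_\Subset B$ in $\mathsf{F}S$, i.e.\ $\bigcap A \Subset \bigcup B$ in $X$. Proximality gives $\phi^{-1}[\bigcap A] \Subset \phi^{-1}[\bigcup B]$ in $Y$, and since $\mathsf{O}Y$ is a continuous lattice we may interpolate, picking $O$ with $\phi^{-1}[\bigcap A] \Subset O \Subset \phi^{-1}[\bigcup B]$. Using that $R$ is a $\cap$-round subbasis of $Y$, for each $y \in \phi^{-1}[\bigcup B]$ one finds $G_y \in \mathsf{F}R$ with $y \in \bigcap G_y \Subset \phi^{-1}[\bigcup B]$; extracting a finite subcover of $O$ yields $\mathcal{G} = \{G_1, \ldots, G_n\} \in \mathsf{FF}R$ with $\phi^{-1}[\bigcap A] \Subset \bigcup_j \bigcap G_j$ and $G_j \sqsubset_\phi B$ for each $j$. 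Let $\mathcal{F} = \{\{g_1, \ldots, g_n\} : g_j \in G_j \text{ for each } j\}$ be the finite set of transversals of $\mathcal{G}$; a standard blocker-duality argument shows $\mathcal{F} \bowtie \mathcal{G}$ (any $H$ hitting every transversal contains some $G_j$, else one builds a transversal disjoint from $H$). For any transversal $F \in \mathcal{F}$, the inclusions $\bigcap G_j \subseteq g_j \subseteq \bigcup F$ give $\phi^{-1}[\bigcap A] \Subset \bigcup_j \bigcap G_j \subseteq \bigcup F$; then any $H \in \mathsf{F}R$ with $\bigcap H \Subset \phi^{-1}[a]$ for every $a \in A$ satisfies $\bigcap H \subseteq \phi^{-1}[\bigcap A] \Subset \bigcup F$, giving $A \sqsubseteq_\phi F$. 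Hence $A \mathrel{\sqsubseteq_\phi \bullet \sqsubset_\phi} B$, so $\sqsubset_\phi$ is proper.

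Composition $\sqsubset_{\psi \circ \phi} = \sqsubset_\phi \bullet \sqsubset_\psi$ follows the \autoref{Ab} template, with proximality again replacing the lone use of core coherence: from $\bigcap G \Subset \psi^{-1}[\bigcup H]$ for each $G \in \mathcal{G}$, we obtain $\bigcup_G \bigcap G \Subset \psi^{-1}[\bigcup H]$, hence by proximality $\phi^{-1}\bigl[\bigcup_G \bigcap G\bigr] \Subset \phi^{-1}[\psi^{-1}[\bigcup H]]$, which combined with $\bigcap F \subseteq \phi^{-1}[\bigcup_G \bigcap G]$ closes the argument. Proximal maps are closed under composition (pulling back $\Subset$ twice), so the functor indeed lands in $\mathbf{Cov_P}$. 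The main obstacle is the properness step, and the key combinatorial trick there is using transversals of $\mathcal{G}$ for $\mathcal{F}$: because the cover $\phi^{-1}[\bigcap A] \Subset \bigcup_j \bigcap G_j$ is genuinely $\Subset$ (not just $\subseteq$), thanks to the interpolation in $\mathsf{O}Y$, it descends to $\bigcap H \Subset \bigcup F$ via $\bigcap H \subseteq \phi^{-1}[\bigcap A]$.
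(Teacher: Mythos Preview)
Your proposal is correct and follows essentially the same route as the paper: replace core coherence by proximality in the two indicated spots of \autoref{Ab}, and for properness use proximality plus interpolation in $\mathsf{O}Y$ to sandwich $\phi^{-1}[\bigcap A]$ by a finite union $\bigcup_j\bigcap G_j$ compactly inside $\phi^{-1}[\bigcup B]$, then observe that the transversals of $\{G_1,\ldots,G_n\}$ witness $\sqsubseteq_\phi\bullet\sqsubset_\phi$. The paper's proof is the same, merely writing your transversal family $\mathcal{F}$ as $\mathcal{H}_\between$ and absorbing the interpolation into the single display $\phi^{-1}[\bigcap F]\Subset\bigcup_{H\in\mathcal{H}}\bigcap H\Subset\phi^{-1}[\bigcup G]$.
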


\begin{proof}
The proof is mostly the same as the proof of \autoref{Ab} except we use the proximality of $\phi$ to obtain $\Subset$ in at the end rather than the start in \eqref{RightAux}, i.e.
\[\bigcap F\subseteq\bigcap_{H\in\mathcal{H}}\phi^{-1}[\bigcup H]\subseteq\phi^{-1}[\bigcup_{I\in\mathcal{I}}\bigcap I]\Subset\phi^{-1}[\bigcup G]\]
as well as in \eqref{circPreserving}, i.e.
\[\bigcap F\subseteq\bigcap_{G\in\mathcal{G}_\between}\phi^{-1}[\bigcup G]=\phi^{-1}[\bigcup_{G\in\mathcal{G}}\bigcap G]\Subset\phi^{-1}[\psi^{-1}[\bigcup H]].\]

The only extra thing we have to show is that $\sqsubset_\phi$ is proper whenever $\phi$ is proximal.  To see this, say $\phi\in\mathbf{Sub_P}((S,X),(R,Y))$ and $F,G\in\mathsf{F}S$ satisfy $F\vdash_\Subset G$, i.e. $\bigcap F\Subset\bigcup G$.  As $\phi$ is proximal, $\phi^{-1}[\bigcap F]\Subset\phi^{-1}[\bigcup G]$ so we have $\mathcal{H}\in\mathsf{FF}R$ with
\[\phi^{-1}[\bigcap F]\Subset\bigcap_{I\in\mathcal{H}_\between}\bigcup I=\bigcup_{H\in\mathcal{H}}\bigcap H\Subset\phi^{-1}[\bigcup G].\]
Thus $\mathcal{H}\sqsubset_\phi G$ and if $E\sqsubset_\phi\mathsf{1}F$ then $\bigcap E\subseteq\phi^{-1}[\bigcap F]\Subset\bigcap_{I\in\mathcal{H}_\between}\bigcup I$, showing that $F\sqsubseteq_\phi\mathcal{H}_\between$ and hence $\vdash_\Subset\ \subseteq\ \sqsubseteq_\phi\bullet\sqsubset_\phi$, i.e. $\sqsubset_\phi$ is proper.
\end{proof}

We call $\mathsf{Ab}$ the \emph{abstraction} functor, as opposed to \emph{spatialisation} or \emph{spectral} functor $\mathsf{Sp}$ we define next.  First, for any $\sqsubset\ \in\mathbf{Cov}((S,\vdash),(R,\vDash))$, define a function $\phi_\sqsubset$ on $\mathrm{dom}(\phi_\sqsubset)=\{T\in\mathsf{T}^S:\mathsf{F}T^{\sqsubset_1}\neq\emptyset\}$ by
\[\phi_\sqsubset(T)=\mathsf{F}T^{\sqsubset_1}=\{r\in R:\exists F\in\mathsf{F}T\ (F\sqsubset r)\}.\]

\begin{thm}
For any $\sqsubset\ \in\mathbf{Cov}((S,\vdash),(R,\vDash))$, $F\in\mathsf{F}S$ and $G\in\mathsf{F}R$,
\begin{equation}\label{FsqF'}
F\sqsubset G\qquad\Leftrightarrow\qquad\mathsf{T}_F\Subset\phi_\sqsubset^{-1}[\mathsf{T}^{G}],
\end{equation}
\end{thm}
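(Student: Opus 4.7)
The plan is to reduce both directions to the quasi-ideal characterisation of $\Subset$ given in \eqref{TQSub}, combined with the defining identity $\sqsubset\ =\ \vdash\bullet\sqsubset_{1\exists}$ of a cover morphism. The geometric bridge is the observation that
\[\phi_\sqsubset^{-1}[\mathsf{T}^G]\ =\ \bigcup\{\mathsf{T}_{F'}:F'\in\mathsf{F}S\text{ and }\exists g\in G\ (F'\sqsubset g)\},\]
since $g\in\phi_\sqsubset(T)=\mathsf{F}T^{\sqsubset_1}$ precisely when some $F'\in\mathsf{F}T$ satisfies $F'\sqsubset g$.

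For $(\Rightarrow)$, I would unpack $F\sqsubset G$ via $\sqsubset\ =\ \vdash\bullet\sqsubset_{1\exists}$. Since $\vdash$ is upper, this yields some $\mathcal{H}\in\mathsf{FF}S$ with $F\vdash\mathcal{H}_\between$ and $\mathcal{H}\sqsubset_{1\exists}G$, so for each $H\in\mathcal{H}$ there is $g_H\in G$ with $H\sqsubset g_H$. Applying \eqref{TQSub} with $\mathcal{Q}=\{F\}$ and $\mathcal{R}=\mathcal{H}$, witnessed by $\mathcal{G}=\mathcal{H}$, gives $\mathsf{T}_F\Subset\mathsf{T}_\mathcal{H}=\bigcup_{H\in\mathcal{H}}\mathsf{T}_H$. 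For each $H\in\mathcal{H}$ and $T\in\mathsf{T}_H$ we have $H\in\mathsf{F}T$ with $H\sqsubset g_H$, so $g_H\in\phi_\sqsubset(T)\cap G$, placing $T\in\phi_\sqsubset^{-1}[\mathsf{T}^G]$. Thus $\mathsf{T}_\mathcal{H}\subseteq\phi_\sqsubset^{-1}[\mathsf{T}^G]$, and right-monotonicity of $\Subset$ yields $\mathsf{T}_F\Subset\phi_\sqsubset^{-1}[\mathsf{T}^G]$.

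For $(\Leftarrow)$, since $\mathsf{T}^S$ is stably locally compact by \autoref{TopRep}, and hence core compact, $\Subset$ interpolates in $\mathsf{O}\mathsf{T}^S$: from $\mathsf{T}_F\Subset\phi_\sqsubset^{-1}[\mathsf{T}^G]$ one picks open $V$ with $\mathsf{T}_F\Subset V\Subset\phi_\sqsubset^{-1}[\mathsf{T}^G]$. The displayed basic open cover of $\phi_\sqsubset^{-1}[\mathsf{T}^G]$ then restricts to a finite subcover of $V$, giving finite $\mathcal{H}=\{F_1,\dots,F_n\}\subseteq\mathsf{F}S$ with each $F_i\sqsubset g_i\in G$ (so $\mathcal{H}\sqsubset_{1\exists}G$) and $V\subseteq\mathsf{T}_\mathcal{H}$. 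Right-monotonicity of $\Subset$ upgrades $\mathsf{T}_F\Subset V\subseteq\mathsf{T}_\mathcal{H}$ to $\mathsf{T}_F\Subset\mathsf{T}_\mathcal{H}$, and the converse direction of \eqref{TQSub} supplies $\mathcal{G}\in\mathsf{F}\mathcal{H}$ with $F\vdash\mathcal{G}_\between$. Since $\mathcal{G}\subseteq\mathcal{H}$, we still have $\mathcal{G}\sqsubset_{1\exists}G$, so $\mathcal{G}$ witnesses $F\mathrel{(\vdash\bullet\sqsubset_{1\exists})}G$, i.e.\ $F\sqsubset G$.

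The main obstacle is in $(\Leftarrow)$: the finite-subcover property of $\Subset$ alone only delivers $\mathsf{T}_F\subseteq\mathsf{T}_\mathcal{H}$, whereas \eqref{TQSub} requires the stronger $\mathsf{T}_F\Subset\mathsf{T}_\mathcal{H}$; it is precisely $\Subset$-interpolation in the core compact space $\mathsf{T}^S$ that closes this gap by letting us apply the subcover property to an intermediate $V$ rather than directly to $\mathsf{T}_F$. A secondary point requiring care is keeping the arities of the intermediate families straight: both $\mathcal{H}$ and $\mathcal{G}$ must live in $\mathsf{FF}S$, matching the ``middle'' variable of the cut-composition $\vdash\bullet\sqsubset_{1\exists}$, rather than in $\mathsf{FF}R$.
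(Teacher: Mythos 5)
Your reduction of both directions to \eqref{TQSub} is sound in outline, and your $\Leftarrow$ direction is a legitimate variant of the paper's: where the paper takes a finite subcover of $\mathsf{T}_F$ itself, obtains only $\mathsf{T}_F\subseteq\mathsf{T}_\mathcal{H}$, and then closes via \eqref{FDGT} together with the cover-relation identity $\Vdash\bullet\sqsubset\ =\ \sqsubset$, you interpolate first so as to land in the $\Subset$-version \eqref{TQSub} and close with $\sqsubset\ =\ \vdash\bullet\sqsubset_{1\exists}$. Either route works once the preliminaries are in place.

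The genuine gap is in your ``geometric bridge''. You assert
\[\phi_\sqsubset^{-1}[\mathsf{T}^G]\ =\ \bigcup\{\mathsf{T}_{F'}:F'\in\mathsf{F}S\text{ and }\exists g\in G\ (F'\sqsubset g)\}\]
by merely unwinding the definition of $\phi_\sqsubset(T)=\mathsf{F}T^{\sqsubset_1}$. But $\mathsf{T}^G$ is by definition a set of \emph{tight} subsets of $R$, so $T\in\phi_\sqsubset^{-1}[\mathsf{T}^G]$ requires not just that $\mathsf{F}T^{\sqsubset_1}$ meets $G$ but that $\mathsf{F}T^{\sqsubset_1}$ is tight. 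The inclusion $\subseteq$ of your bridge is immediate, but the inclusion $\supseteq$ \textendash\, which is exactly what your $\Rightarrow$ direction invokes when it passes from $g_H\in\phi_\sqsubset(T)\cap G$ to $T\in\phi_\sqsubset^{-1}[\mathsf{T}^G]$ \textendash\, needs the roundness and primality of $\mathsf{F}T^{\sqsubset_1}$. This is established nowhere earlier in the paper (\autoref{Prime->Tight} concerns $\vdash$ itself, not a cover morphism) and constitutes roughly half of the paper's own proof of this theorem: roundness uses $\sqsubset\ =\ \sqsubset\bullet\vDash\ =\ \vdash\bullet\sqsubset_{1\exists}$ together with the primality of $T$ and \eqref{existsbetween}, while primality of $\mathsf{F}T^{\sqsubset_1}$ uses $\sqsubset\bullet\vDash\ \subseteq\ \sqsubset$ and the primality of $T$ again. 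You must supply (or cite) this argument before the bridge, and hence before either direction of \eqref{FsqF'}, is available. It also underwrites the openness of $\phi_\sqsubset^{-1}[\mathsf{T}^G]$ that your interpolation step tacitly assumes, though that step could instead be rerouted through the manifestly open set $\bigcup\{\mathsf{T}_{F'}:F'\sqsubset_{1\exists}G\}$, which contains $\phi_\sqsubset^{-1}[\mathsf{T}^G]$ by the easy half of the bridge.
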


\begin{proof}
Take $T\in\mathsf{T}^S$ with $\mathsf{F}T^{\sqsubset_1}\neq\emptyset$.  If $r\in\mathsf{F}T^{\sqsubset_1}$ then we have $F\in\mathsf{F}T$ with $F\sqsubset r$.  As $\sqsubset\ =\ \sqsubset\bullet\vDash\ =\ \vdash\bullet\sqsubset_{1\exists}$, we have $\mathcal{G}\in\mathsf{FF}S$ and $\mathcal{H}\in\mathsf{FF}R$ with $F\vdash_\between\mathcal{G}\sqsubset_{1\exists\between}\mathcal{H}\vDash r$.  As $T$ is prime, we then have $G\in\mathcal{G}\cap\mathsf{F}T$.  By \eqref{existsbetween}, we then have $H\in\mathcal{H}$ with $G\sqsubset_{1\forall}H\vDash r$ and hence $H\subseteq\mathsf{F}T^{\sqsubset_1}$, showing that $\mathsf{F}T^{\sqsubset_1}$ is round.

On the other hand, if $\mathsf{F}T^{\sqsubset_1}\supseteq F\vDash G$ then we have $H\in\mathsf{F}T$ with $H\sqsubset\mathsf{1}F\bowtie\{F\}\vDash G$ and hence $H\sqsubset G$, as $\sqsubset\bullet\vDash\ \subseteq\ \sqsubset$.  Then again we have $\mathcal{I}\in\mathsf{FF}S$ with $H\vdash_\between\mathcal{I}\sqsubset_{1\exists}G$.  As $T$ is prime, we again have $I\in\mathcal{I}\cap\mathsf{F}T$.  Taking $g\in G$ with $I\sqsubset g$, it follows that $g\in\mathsf{F}T^{\sqsubset_1}$, showing that $\mathsf{F}T^{\sqsubset_1}$ is also prime and hence tight.

The above argument showed that if $F\sqsubset G$ and $T\in\mathsf{T}_F$ then $\mathsf{F}T^{\sqsubset_1}\in\mathsf{T}^G$ and hence $\mathsf{T}_F\subseteq\phi_\sqsubset^{-1}[\mathsf{T}^G]$.  But if $F\sqsubset G$ then we have $\mathcal{H}\in\mathsf{FF}S$ with $F\vdash_\between\mathcal{H}\sqsubset G$ and hence $\mathsf{T}_F\Subset\mathsf{T}_\mathcal{H}\subseteq\phi_\sqsubset^{-1}[\mathsf{T}^G]$, by \eqref{TQSub}, which proves the $\Rightarrow$ part of \eqref{FsqF'}.

Conversely, say $\mathsf{T}_F\Subset\phi_\sqsubset^{-1}[\mathsf{T}^G]$.  By the definition of $\phi_\sqsubset$, we know that $(\mathsf{T}_H)_{H\sqsubset G}$ covers
$\phi_\sqsubset^{-1}[\mathsf{T}^G]$ and hence has a finite subcover of $\mathsf{T}_F$, i.e. $\mathsf{T}_F\subseteq\mathsf{T}_\mathcal{H}$, for some $\mathcal{H}\in\mathsf{FF}S$ with $\mathcal{H}\sqsubset G$.  By \eqref{FDGT}, this implies $F\Vdash_\between\mathcal{H}\sqsubset G$.  As
\[\Vdash\bullet\sqsubset\ \ =\ \ \Vdash\bullet\vdash\bullet\sqsubset\ \ =\ \ \vdash\bullet\sqsubset\ \ =\ \ \sqsubset,\]
this implies that $F\sqsubset G$, thus proving the $\Leftarrow$ part of \eqref{FsqF'}.
\end{proof}

Note that the identity morphism on a cover system $(S,\vdash)$ is just $\vdash$ itself.

\begin{thm}\label{Sp}
We have a contravariant functor $\mathsf{Sp}:\mathbf{Cov}\rightarrow\mathbf{SSub}$ given by
\[\mathsf{Sp}(S,\vdash)=((\mathsf{T}_p)_{p\in S},\mathsf{T}^S)\qquad\text{and}\qquad\mathsf{Sp}(\sqsubset)=\phi_\sqsubset.\]
Restricting to proper morphisms yields a functor $\mathsf{Sp_P}:\mathbf{Cov_P}\rightarrow\mathbf{SSub_P}$.
\end{thm}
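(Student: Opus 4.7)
The plan is to check the four functor axioms in sequence and then treat the proper refinement at the end. On objects there is nothing new to do, since \autoref{TopRep} already gives $((\mathsf{T}_p)_{p\in S},\mathsf{T}^S)\in\mathbf{SSub}$. On morphisms, given $\sqsubset\in\mathbf{Cov}((S,\vdash),(R,\vDash))$, the preceding theorem already shows that $\phi_\sqsubset(T)=\mathsf{F}T^{\sqsubset_1}$ is tight whenever non-empty, so $\phi_\sqsubset$ is a well-defined function on $\mathrm{dom}(\phi_\sqsubset)=\phi_\sqsubset^{-1}[\mathsf{T}^R]$. Combining \eqref{FsqF'} with core compactness of $\mathsf{T}^S$, I would observe that $\phi_\sqsubset^{-1}[\mathsf{T}^G]=\bigcup\{\mathsf{T}_F:F\sqsubset G\}$ is open for every $G\in\mathsf{F}R$; letting $G$ range over $\mathsf{F}R$ shows that $\mathrm{dom}(\phi_\sqsubset)$ is open, and taking $G=\{r\}$ gives continuity on the subbasis. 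The refinement condition in $\mathbf{Sub}$ is then immediate from roundness of $T$: if $r\in\phi_\sqsubset(T)$ then some $F\in\mathsf{F}T$ satisfies $F\sqsubset_1 r$, giving $T\in\mathsf{T}_F\subseteq\phi_\sqsubset^{-1}[\mathsf{T}_r]$.

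Preservation of identities is quick: for any tight $T$, primeness forces $\phi_\vdash(T)\subseteq T$ while roundness forces $T\subseteq\phi_\vdash(T)$, so $\phi_\vdash=\mathrm{id}_{\mathsf{T}^S}$. For preservation of composition, rather than unwinding selections directly I would argue via faithfulness of the abstraction functor $\mathsf{Ab}$. First, \eqref{FsqF'} says exactly $\mathsf{Ab}(\phi_\sqsubset)=\sqsubset_{\phi_\sqsubset}=\sqsubset$. Second, any partial continuous $\phi$ with open domain is determined by $\sqsubset_\phi$, since core compactness of $\mathsf{T}^S$ gives $\phi^{-1}[\mathsf{T}^G]=\bigcup\{\mathsf{T}_F:F\sqsubset_\phi G\}$; specialising to $G=\{r\}$ recovers each $\phi^{-1}[\mathsf{T}_r]$ and hence $\phi$ itself. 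Functoriality of $\mathsf{Ab}$ from \autoref{Ab} then yields
\[\mathsf{Ab}(\phi_\sqin\circ\phi_\sqsubset)\ =\ \mathsf{Ab}(\phi_\sqsubset)\bullet\mathsf{Ab}(\phi_\sqin)\ =\ \sqsubset\bullet\sqin\ =\ \mathsf{Ab}(\phi_{\sqsubset\bullet\sqin}),\]
whence $\phi_\sqin\circ\phi_\sqsubset=\phi_{\sqsubset\bullet\sqin}$ by faithfulness.

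For the proper refinement, the plan is to verify proximality of $\phi_\sqsubset$ only on the basis $(\mathsf{T}_F)_{F\in\mathsf{F}R}$ and extend to all open sets via core coherence. It suffices to show that $F\vDash G$ implies $\phi_\sqsubset^{-1}[\mathsf{T}_F]\Subset\phi_\sqsubset^{-1}[\mathsf{T}^G]$ for all $F,G\in\mathsf{F}R$. Given $F\vDash G$, properness yields $\mathcal{E}\in\mathsf{FF}S$ with $F\sqsubseteq E$ for every $E\in\mathcal{E}$ and $E'\sqsubset G$ for every $E'\in\mathcal{E}_\between$ (using the lower-relation characterisation of cut-composition). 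Writing $\mathcal{Q}=\{H\in\mathsf{F}S:H\sqsubset\mathsf{1}F\}$, the first condition unpacks by definition of $\sqsubseteq$ to $H\vdash E$ for all $H\in\mathcal{Q}$ and $E\in\mathcal{E}$; since $(\mathcal{E}_\between)_\between=\mathcal{E}^\subseteq$, upperness of $\vdash$ upgrades this to $\mathcal{Q}\vdash(\mathcal{E}_\between)_\between$, and \eqref{TQSub} then gives $\phi_\sqsubset^{-1}[\mathsf{T}_F]=\mathsf{T}_\mathcal{Q}\Subset\mathsf{T}_{\mathcal{E}_\between}$. The second condition gives $\mathsf{T}_{\mathcal{E}_\between}\subseteq\phi_\sqsubset^{-1}[\mathsf{T}^G]$, so composing yields the required $\phi_\sqsubset^{-1}[\mathsf{T}_F]\Subset\phi_\sqsubset^{-1}[\mathsf{T}^G]$. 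The main obstacle will be precisely this last step: one has to juggle selections versus supersets and exploit $(\mathcal{E}_\between)_\between=\mathcal{E}^\subseteq$ in order to pass from the pointwise data $H\vdash E$ to the quasi-ideal containment that feeds \eqref{TQSub}.
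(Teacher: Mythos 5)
Your proof is correct in substance but takes a genuinely different route from the paper at two points. For preservation of composition, the paper computes $\phi_{\sqsubset\bullet\sqin}(T)=\phi_{\sqin}(\phi_\sqsubset(T))$ directly by unwinding selections; you instead deduce it from functoriality and faithfulness of $\mathsf{Ab}$, using \eqref{FsqF'} to identify $\mathsf{Ab}(\phi_\sqsubset)$ with $\sqsubset$ and core compactness of $\mathsf{T}^S$ to recover $\phi$ from $\sqsubset_\phi$. This is slicker and reuses \autoref{Ab}, at the cost of one caveat: $\mathsf{Ab}(\phi_\sqsubset)$ is literally a relation on $\mathsf{F}((\mathsf{T}_p)_{p\in S})\times\mathsf{F}((\mathsf{T}_r)_{r\in R})$, and when $\Vdash$ is not antisymmetric the map $F\mapsto F_\mathsf{T}$ is not injective, so ``$\mathsf{Ab}(\phi_\sqsubset)=\sqsubset$'' really means equality after transport along this (well-defined, by \eqref{FsqF'}) relabelling --- this is exactly the content of the natural isomorphism $\blacktriangleleft$ proved later, and your argument should make the identification explicit to avoid the appearance of circularity. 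For properness, the paper fixes $F\vDash\mathsf{1}G$ and produces, for each $g\in G$, a family $\mathcal{H}_g$ with $\phi^{-1}[\mathsf{T}_F]\subseteq\mathsf{T}_{\mathcal{H}_g}\Subset\phi^{-1}[\mathsf{T}_g]$, then intersects using core coherence of $\mathsf{T}^S$; your version handles $F\vDash G$ in one pass via $\sqsubseteq\bullet\sqsubset$ and \eqref{TQSub}, which is equally valid and arguably cleaner.

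One step in the properness argument needs a small repair: \eqref{TQSub} requires a \emph{finite} subfamily $\mathcal{G}\in\mathsf{F}(\mathcal{E}_\between)$ with $\mathcal{Q}\vdash\mathcal{G}_\between$, whereas $\mathcal{E}_\between$ itself is infinite (it is upward closed). You should take $\mathcal{G}_0$ to be the finitely many minimal selections of $\mathcal{E}$ (one element from each $E\in\mathcal{E}$); then $\mathcal{G}_{0\between}=\mathcal{E}^\subseteq=(\mathcal{E}_\between)_\between$, so your $\mathcal{Q}\vdash(\mathcal{E}_\between)_\between$ is exactly $\mathcal{Q}\vdash\mathcal{G}_{0\between}$ and \eqref{TQSub} applies with $\mathcal{G}=\mathcal{G}_0$. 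With that fix, and a sentence reducing general opens to the basic case via core compactness and core coherence (as both you and the paper implicitly do), the argument is complete.
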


\begin{proof}
By \autoref{TopRep}, $((\mathsf{T}_p)_{p\in S},\mathsf{T}^S)\in\mathbf{SSub}$ whenever $(S,\vdash)\in\mathbf{Cov}$.  Moreover, if $\sqsubset\ \in\mathbf{Cov}(S,R)$ and $\phi_\sqsubset(T)\in\mathsf{T}_r$, for some $r\in R$, then we have $F\in\mathsf{F} T $ with $F\sqsubset r$ and hence $T\in\mathsf{T}_F\Subset\phi_\sqsubset^{-1}[\mathsf{T}_r]$, by \eqref{FsqF'}.  This shows that $\phi_\sqsubset$ is a continuous partial map with open domain, i.e. $\phi_\sqsubset\in\mathbf{SSub}(\mathsf{T}^R,\mathsf{T}^S)$.

Now say $\sqin\ \in\mathbf{Cov}(R,Q)$.  If $q\in\phi_{\sqsubset\bullet\sqin}(T)$ then we have finite $F\subseteq T$ with $F\sqsubset\bullet\sqin q$ and hence we have $\mathcal{G}\in\mathsf{FF}R$ with $F\sqsubset_\between\mathcal{G}\sqin q$.  By \eqref{FsqF'}, $\phi_\sqsubset(T)\in\mathsf{T}^G$, for all $G\in\mathcal{G}_\between$, and hence we have $G\in\mathcal{G}$ with $\phi_\sqsubset(T)\in\mathsf{T}_G$.  Then \eqref{FsqF'} again yields $q\in\phi_{\sqin}(\phi_\sqsubset(T))$.  Conversely, if $q\in\phi_{\sqin}(\phi_\sqsubset(T))$ then we have $G\in\mathsf{F}\phi_\sqsubset(T)$ such that $G\sqin q$.  As $\phi_\sqsubset(T)\neq\emptyset$, we may assume $G\neq\emptyset$ and then we have $F\in\mathsf{F}T$ with $F\sqsubset\mathsf{1}G$ so $F\sqsubset\bullet\sqin q$ and hence $q\in\phi_{\sqsubset\bullet\sqin}(T)$.  This shows that $\phi_{\sqsubset\bullet\sqin}(T)=\phi_{\sqin}(\phi_\sqsubset(T))$ and hence $\phi_{\sqsubset\bullet\sqin}=\phi_{\sqin}\circ\phi_\sqsubset$.

The definition of tightness shows that $\phi_\vdash(T)=T$, for any $T\in\mathsf{T}^S$, where $\vdash$ is the cover relation on $S$, i.e. $\phi_\vdash$ is an identity morphism in $\mathbf{SSub}$.  Thus $\mathsf{Sp}$ is indeed a functor from $\mathbf{Cov}$ to $\mathbf{SSub}$.

If $\sqsubset\ \in\mathbf{Cov}_\mathbf{P}(S,R)$ then, for all $F,G\in\mathsf{F}R$ with $F\vdash\mathsf{1}G$ and all $g\in G$, we have $\mathcal{H}_g\in\mathsf{FF}S$ with $F\sqsubseteq_\between\mathcal{H}_g\sqsubset g$.  For any $T\in\phi^{-1}[\mathsf{T}_F]$, we have $I\in\mathsf{F}T$ with $I\sqsubset\mathsf{1}F$ so $I\vdash\mathcal{H}_{g\between}$ and hence $H\subseteq T$, for some $H\in\mathcal{H}_g$, i.e. $T\in\mathsf{T}_{\mathcal{H}_g}$.  This shows that $\phi^{-1}[\mathsf{T}_F]\subseteq\mathsf{T}_{\mathcal{H}_g}\Subset\phi^{-1}[\mathsf{T}_g]$.  Letting $\mathcal{H}=\bigwedge_{g\in G}\mathcal{H}_g$, the core coherence of $\mathsf{T}^S$ then yields $\phi^{-1}[\mathsf{T}_F]\subseteq\mathsf{T}_{\mathcal{H}}\Subset\phi^{-1}[\mathsf{T}_G]$.  This verifies proximality on a basis, which suffices because $\mathsf{T}^{R}$ is core compact.  Thus $\phi_\sqsubset\in\mathbf{SSub_P}(\mathsf{T}^R,\mathsf{T}^S)$, i.e. restricting yields a functor $\mathsf{Sp_P}:\mathbf{Cov_P}\rightarrow\mathbf{SSub_P}$.
\end{proof}

\subsection{The Natural Isomorphisms}

To show that the functors $\mathsf{Ab}$ and $\mathsf{Sp}$ witness a duality of categories, it only remains to define appropriate natural isomorphisms.

\begin{prp}
We have a natural isomorphism $\lambda:1_\mathbf{SSub}\rightarrow\mathsf{Sp}\mathsf{Ab}$ given by
\[\lambda_{(S,X)}(x)=S_x=\{p\in S:x\in p\}.\]
\end{prp}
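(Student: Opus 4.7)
The plan is to split the proof into verifying (1) each $\lambda_{(S,X)}$ is an isomorphism in $\mathbf{SSub}$ and (2) the naturality square commutes for every $\phi \in \mathbf{SSub}((S,X),(R,Y))$. For (1), I would apply \autoref{Recovery} directly: since $(S,X) \in \mathbf{SSub}$ the space $X$ is stably locally compact, so \autoref{Recovery} gives that $x \mapsto S_x$ is a homeomorphism from $X$ onto $\mathsf{T}^S$. Moreover, the identity $\lambda_{(S,X)}^{-1}[\mathsf{T}_p] = p$ shows that this homeomorphism maps the subbasis $S$ bijectively onto $(\mathsf{T}_p)_{p \in S}$, so both $\lambda_{(S,X)}$ and its inverse automatically satisfy the subbasis-continuity condition that defines morphisms in $\mathbf{SSub}$; hence $\lambda_{(S,X)}$ is an isomorphism.

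For (2), the naturality square amounts to the pointwise identity
\[\phi_{\sqsubset_\phi}(S_x) = R_{\phi(x)}\]
for every $x \in \mathrm{dom}(\phi)$, where $\sqsubset_\phi = \mathsf{Ab}(\phi)$. I would verify this by direct computation. For $\supseteq$: if $\phi(x) \in r \in R$ then $x \in \phi^{-1}[r] \in \mathsf{O}X$ by continuity, and since $S$ is a subbasis of the core compact space $X$ we get $F \in \mathsf{F}S$ with $x \in \bigcap F \Subset \phi^{-1}[r]$; then $F \subseteq S_x$ and $F \sqsubset_\phi r$, so $r \in \phi_{\sqsubset_\phi}(S_x)$. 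For $\subseteq$: if $r \in \phi_{\sqsubset_\phi}(S_x)$ then some $F \subseteq S_x$ satisfies $\bigcap F \Subset \phi^{-1}[r]$, whence $x \in \bigcap F$ forces $\phi(x) \in r$. The same argument also matches the domains of the two composites, since $\phi_{\sqsubset_\phi}(S_x)$ is nonempty precisely when $R_{\phi(x)}$ is nonempty, and the latter holds exactly when $\phi$ is defined at $x$ (because $R$ covers $Y$).

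The main obstacle is really bookkeeping rather than mathematical depth: the substantive content is already packaged in \autoref{Recovery} and the definition of $\phi_\sqsubset$, so no essentially new argument is needed. The delicate task is to keep the direction-reversing conventions in $\mathbf{SSub}$ and $\mathbf{Cov}$ straight when formulating naturality, and to verify that the partial-function domains on each side of the naturality square coincide exactly rather than only up to inclusion.
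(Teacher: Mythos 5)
Your proposal is correct and follows essentially the same route as the paper: the isomorphism part is delegated to \autoref{Recovery}, and naturality reduces to the pointwise identity $\phi_{\sqsubset_\phi}(S_x)=R_{\phi(x)}$, verified by unwinding the definition of $\sqsubset_\phi$ using continuity of $\phi$ and core compactness of $X$. Your extra care about matching the partial-function domains is a harmless refinement of what the paper leaves implicit.
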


\begin{proof}
By \eqref{Recovery}, $\lambda_{(S,X)}$ is an isomorphism in $\mathbf{SSub}(\mathsf{Sp}(\mathsf{Ab}(S,X)),(S,X))$, for all $(S,X)\in\mathbf{SSub}$.  We just have to show $\lambda_{(R,Y)}\circ\phi=\mathsf{Sp}(\mathsf{Ab}(\phi))\circ\lambda_{(S,X)}$, whenever $\phi\in\mathbf{SSub}(R,S)$, i.e. for all $x\in X$,
\[R_{\phi(x)}=\phi_{\sqsubset_\phi}(S_x).\]
But expanding definitions yields
\begin{align*}
r\in\phi_{\sqsubset_\phi}(S_x)\quad&\Leftrightarrow\quad\exists F\in\mathsf{F} S_x \ (F\sqsubset_\phi r)\\
&\Leftrightarrow\quad\exists F\in\mathsf{F} S \ (x\in\bigcap F\Subset\phi^{-1}[r])\\
&\Leftrightarrow\quad\phi(x)\in r,\quad\text{ as $\phi$ is continuous and $X$ is core compact},\\
&\Leftrightarrow\quad r\in R_{\phi(x)}.\qedhere
\end{align*}
\end{proof}

\begin{prp}
We have a natural isomorphism $\blacktriangleleft\ :1_\mathbf{Cov}\rightarrow\mathsf{Ab}\mathsf{Sp}$ given by
\[F_\mathsf{T}\blacktriangleleft_{(S,\vdash)}G\qquad\Leftrightarrow\qquad F\vdash G,\]
where $F_\mathsf{T}=\{\mathsf{T}_f:f\in F\}$.
\end{prp}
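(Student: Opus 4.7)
The plan is to verify three things: (i) each $\blacktriangleleft_{(S,\vdash)}$ is a well-defined cover morphism in $\mathbf{Cov}((\mathsf{T}_S,\vdash_\Subset),(S,\vdash))$; (ii) it admits a two-sided inverse in $\mathbf{Cov}$; and (iii) the family is natural with respect to arbitrary cover morphisms. The guiding principle is that, by \eqref{FCGT} applied to the cover relation $\vdash$, we have the triple equivalence $F\vdash G\iff\mathsf{T}_F\Subset\mathsf{T}^G\iff F_\mathsf{T}\vdash_\Subset G_\mathsf{T}$, so $\vdash$ and $\vdash_\Subset$ are really the same relation transported across $F\mapsto F_\mathsf{T}$. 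In particular $\blacktriangleleft$ is well-defined on $\mathsf{F}\mathsf{T}_S\times\mathsf{F}S$, independent of the chosen preimage of $F_\mathsf{T}$ under $s\mapsto\mathsf{T}_s$.

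For (i), the Karoubi identity $\blacktriangleleft=\blacktriangleleft\bullet\vdash$ is immediate from cut-idempotence $\vdash\bullet\vdash=\vdash$. The identity $\blacktriangleleft=\vdash_\Subset\bullet\blacktriangleleft_{1\exists}$ unpacks to divisibility $\vdash=\vdash\bullet\vdash_{1\exists}$: for one direction, witnesses $\mathcal{A},\mathcal{B}\in\mathsf{FF}\mathsf{T}_S$ yield
\[\mathsf{T}_F\Subset\bigcap_{A\in\mathcal{A}}\mathsf{T}^A=\bigcup_{\sigma\in\mathcal{A}_\between}\mathsf{T}_\sigma\subseteq\bigcup_{B\in\mathcal{B}}\mathsf{T}_B\subseteq\mathsf{T}^G\]
by core coherence of $\mathsf{T}^S$, the hypothesis $\mathcal{A}\bowtie\mathcal{B}$, and $\mathsf{T}_B\Subset\mathsf{T}_g$ for some $g\in G$ via \eqref{FCGT}, giving $F\vdash G$. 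Conversely, the $\mathsf{FF}S$-witnesses $\mathcal{A}',\mathcal{B}'$ supplied by divisibility of $\vdash$ transfer to $\mathcal{A}'_\mathsf{T},\mathcal{B}'_\mathsf{T}$, with the $\bowtie$-relation preserved because any $\mathsf{T}_S$-selection $\tau$ of $\mathcal{A}'_\mathsf{T}$ pulls back to the $S$-selection $\tau^*=\{a\in\bigcup\mathcal{A}':\mathsf{T}_a\in\tau\}$ of $\mathcal{A}'$.

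For (ii), define $\blacktriangleright_{(S,\vdash)}\in\mathbf{Cov}((S,\vdash),(\mathsf{T}_S,\vdash_\Subset))$ by $F\blacktriangleright G_\mathsf{T}\iff F\vdash G$; the same analysis shows $\blacktriangleright$ is a cover morphism. Both cut-composites $\blacktriangleleft\bullet\blacktriangleright$ and $\blacktriangleright\bullet\blacktriangleleft$ collapse via cut-idempotence $\vdash\bullet\vdash=\vdash$ under the correspondence $F\leftrightarrow F_\mathsf{T}$, yielding the identities $\vdash_\Subset$ on $(\mathsf{T}_S,\vdash_\Subset)$ and $\vdash$ on $(S,\vdash)$ respectively.

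For (iii), given any $\sqsubset\in\mathbf{Cov}((S,\vdash),(R,\vDash))$, the naturality equation $\blacktriangleleft_{(S,\vdash)}\bullet\sqsubset=\mathsf{Ab}\mathsf{Sp}(\sqsubset)\bullet\blacktriangleleft_{(R,\vDash)}$ boils down to the tautology $F\sqsubset H\iff F\sqsubset H$ after both sides are simplified: the left-hand side collapses via the Karoubi identity $\vdash\bullet\sqsubset=\sqsubset$, and the right-hand side uses $\mathsf{Ab}\mathsf{Sp}(\sqsubset)=\sqsubset_{\phi_\sqsubset}$ together with \eqref{FsqF'}, which yields $F_\mathsf{T}\sqsubset_{\phi_\sqsubset}G_\mathsf{T}\iff F\sqsubset G$, and the cover-morphism identity $\sqsubset\bullet\vDash=\sqsubset$. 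The main technical obstacle throughout is correctly propagating the diagonal relation $\bowtie$ and the universal/existential quantifier subscripts across the map $F\mapsto F_\mathsf{T}$ when $s\mapsto\mathsf{T}_s$ collapses elements of $S$, but this always reduces to the $\Vdash$-invariance of $\vdash$ that is already implicit in \eqref{FCGT} and \eqref{FDGT}.
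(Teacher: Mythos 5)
Your proposal is correct and follows essentially the same route as the paper: well-definedness and the transport of $\vdash$ to $\vdash_\Subset$ via \eqref{FCGT} together with its converse for cover relations, an explicit two-sided inverse, and naturality by collapsing both cut-composites down to $F\sqsubset G$ using \eqref{FsqF'} and the Karoubi identities. The only difference is that you spell out the verification that $\blacktriangleleft$ satisfies the cover-morphism identities (including the selection-pullback needed to transfer $\bowtie$ across $F\mapsto F_\mathsf{T}$), which the paper compresses into the remark that this holds ``as $\vdash$ is a cover relation''.
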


\begin{proof}
If $(S,X)\in\mathbf{Cov}$, $F,G,H\in\mathsf{F}S$ and $F_\mathsf{T}=G_\mathsf{T}$ then
\[F\vdash H\qquad\Leftrightarrow\qquad\mathsf{T}_F=\mathsf{T}_G\Subset\mathsf{T}^H\qquad\Leftrightarrow\qquad G\vdash H,\]
by \eqref{FCGT}.  Thus $\blacktriangleleft_{(S,\vdash)}$ is well-defined and $\blacktriangleleft_{(S,\vdash)}\ \in\mathbf{Cov}((S,\vdash),\mathsf{Ab}(\mathsf{Sp}(S,\vdash)))$, as $\vdash$ is cover relation.  We also see that $\blacktriangleleft_{(S,\vdash)}$ is an isomorphism with inverse given by
\[F\blacktriangleleft_{(S,\vdash)}^{-1}\{\mathsf{T}_g:g\in G\}\qquad\Leftrightarrow\qquad F\vdash G.\]
Given $\mathcal{F}\in\mathsf{FF}S$, let $\mathcal{F}_\mathsf{T}=\{F_\mathsf{T}:F\in\mathcal{F}\}$.  To show that $\blacktriangleleft_{(S,\vdash)}\bullet\sqsubset\ =\ \sqsubset_{\phi_\sqsubset}\bullet\blacktriangleleft_{(R,\vDash)}$ whenever $\sqsubset\ \in\mathbf{Cov}((S,\vdash),(R,\vDash))$, we again just expand the definitions, i.e.
\begin{align*}
F_\mathsf{T}\sqsubset_{\phi_\sqsubset}\bullet\blacktriangleleft_{(R,\vDash)}G\qquad\Leftrightarrow\qquad&\exists\mathcal{H}\in\mathsf{FF}R\ (F_\mathsf{T}\sqsubset_{\phi_\sqsubset\between}\mathcal{H}_\mathsf{T}\blacktriangleleft_{(R,\vDash)}G)\\
\Leftrightarrow\qquad&\exists\mathcal{H}\in\mathsf{FF}R\ (\mathsf{T}_F\Subset\bigcap_{I\in\mathcal{H}_\between}\phi_\sqsubset^{-1}[\mathsf{T}^I]\text{ and }\mathcal{H}\vdash G)\\
\Leftrightarrow\qquad&\exists\mathcal{H}\in\mathsf{FF}R\ (F\sqsubset_\between\mathcal{H}\vdash G),\quad\text{ by \eqref{FsqF'}},\\
\Leftrightarrow\qquad&F\sqsubset G\\
\Leftrightarrow\qquad&\exists\mathcal{H}\in\mathsf{F}\mathsf{F} S  \ (F\vdash_\between\mathcal{H}\sqsubset G)\\
\Leftrightarrow\qquad&\exists\mathcal{H}\in\mathsf{F}\mathsf{F} S  \ (F_\mathsf{T}\blacktriangleleft_{(S,\vdash)\between}\mathcal{H}\sqsubset G)\\
\Leftrightarrow\qquad&F_\mathsf{T}\blacktriangleleft_{(S,\vdash)}\bullet\sqsubset G.\qedhere
\end{align*}
\end{proof}

\begin{thm}\label{CategoricalDuality}
We have a categorical duality between $\mathbf{SSub}$ and $\mathbf{Cov}$ as witnessed by the adjoint functors $\mathsf{Ab}$ and $\mathsf{Sp}$ and natural isomorphisms $\lambda$ and $\blacktriangleleft$.
\end{thm}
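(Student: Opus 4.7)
The plan is simply to collate the results already in place. The contravariant functors $\mathsf{Ab}:\mathbf{SSub}\to\mathbf{Cov}$ and $\mathsf{Sp}:\mathbf{Cov}\to\mathbf{SSub}$ have been constructed and shown to be functorial in the preceding theorems, and the two immediately preceding propositions establish that $\lambda:1_\mathbf{SSub}\to\mathsf{Sp}\mathsf{Ab}$ and $\blacktriangleleft:1_\mathbf{Cov}\to\mathsf{Ab}\mathsf{Sp}$ are natural isomorphisms. Thus $\mathsf{Ab}$ and $\mathsf{Sp}$ are already mutually quasi-inverse contravariant functors, which is precisely the data of a duality of categories.

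To certify that this duality is adjoint in the strict sense, I would verify the triangle identities: for each $(S,X)\in\mathbf{SSub}$, the composite $\mathsf{Ab}(\lambda_{(S,X)})\bullet\blacktriangleleft_{\mathsf{Ab}(S,X)}$ should equal the identity cover relation $\vdash_\Subset$ on $\mathsf{Ab}(S,X)$, and dually $\mathsf{Sp}(\blacktriangleleft_{(S,\vdash)})\circ\lambda_{\mathsf{Sp}(S,\vdash)}$ should equal the identity map on $\mathsf{Sp}(S,\vdash)$. Both identities reduce to unwinding the definitions of $\lambda$ and $\blacktriangleleft$, using \eqref{FCGT}, \eqref{FDGT}, and \eqref{FsqF'} as the translations between the cover-theoretic and topological sides, together with the defining formulas $\lambda_{(S,X)}(x)=S_x$ and $F_\mathsf{T}\blacktriangleleft_{(S,\vdash)}G\Leftrightarrow F\vdash G$. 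On the cover side, this amounts to observing that $\blacktriangleleft$ is constructed so that its image under $\mathsf{Ab}\mathsf{Sp}$ relates $F_\mathsf{T}$ to $G_\mathsf{T}$ exactly when $\mathsf{T}_F\Subset\mathsf{T}^G$, which by \eqref{FCGT} is the identity cover relation on the subbasis $(\mathsf{T}_p)_{p\in S}$; the topological side is then a direct consequence of $\phi_{\blacktriangleleft_{(S,\vdash)}}\circ\lambda$ collapsing tight subsets back to themselves via roundness.

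I anticipate no substantive obstacle: each required identification has been recorded earlier. Should bookkeeping around contravariance and the two different compositions ($\bullet$ versus $\circ$) threaten to obscure the calculation, one may fall back on the standard categorical fact that any pair of functors equipped with natural isomorphisms $1\cong GF$ and $1\cong FG$ is automatically the underlying data of an adjoint equivalence, after a compatible modification of one of the two transformations. Either route delivers the claimed duality between $\mathbf{SSub}$ and $\mathbf{Cov}$ directly from \autoref{ACovSSub} together with the functoriality and naturality results that precede it.
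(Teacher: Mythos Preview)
Your proposal is correct and follows essentially the same approach as the paper: after noting that the functors and natural isomorphisms are already in place, the paper's proof consists precisely of verifying the two zigzag (triangle) identities $\mathsf{Ab}\lambda\bullet\blacktriangleleft\mathsf{Ab}=1_\mathsf{Ab}$ and $\mathsf{Sp}\blacktriangleleft\circ\lambda\mathsf{Sp}=1_\mathsf{Sp}$ by unwinding definitions, exactly as you outline. The paper carries out the explicit computations (using \eqref{FsqF'} rather than \eqref{FDGT}, but otherwise just as you describe), whereas you leave them as a sketch; your fallback via the standard categorical fact is a legitimate alternative but not the route the paper takes.
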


\begin{proof}
It only remains to verify the zigzag identities, i.e. $\mathsf{Ab}\lambda\ \bullet\blacktriangleleft\!\mathsf{Ab}=1_\mathsf{Ab}$ and $\mathsf{Sp}\!\blacktriangleleft\ \circ\ \lambda\mathsf{Sp}=1_\mathsf{Sp}$.  The first equation means that $\sqsubset_{\lambda_{(S,X)}}\bullet\blacktriangleleft_{(S,\vdash_\Subset)}\ =\ \vdash_\Subset$, for all $(S,X)\in\mathbf{SSub}$.  Noting that
\[F\sqsubset_{\lambda_{(S,X)}}G_\mathsf{T}\quad\Leftrightarrow\quad\bigcap F\Subset\lambda_{(S,X)}^{-1}[\mathsf{T}^G]=\bigcup G\quad\Leftrightarrow\quad F\vdash_\Subset G,\]
we do indeed see that
\begin{align*}
F\sqsubset_{\lambda_{(S,X)}}\bullet\blacktriangleleft_{(S,\vdash_\Subset)}G\quad&\Leftrightarrow\quad\exists\mathcal{H}\in\mathsf{FF}S\ (F\sqsubset_{\lambda_{(S,X)}\between}\mathcal{H}_\mathsf{T}\blacktriangleleft_{(S,\vdash_\Subset)}G)\\
&\Leftrightarrow\quad\exists\mathcal{H}\in\mathsf{FF}S\ (F\vdash_{\Subset\between}\mathcal{H}\vdash_\Subset G)\\
&\Leftrightarrow\quad F\vdash G.
\end{align*}
The second equation means that $\phi_{\blacktriangleleft_{(S,\vdash)}}\circ\lambda_{(S_\mathsf{T},\mathsf{T}^S)}$ is the identity map on $\mathsf{T}^S$, for all $(S,\vdash)\in\mathbf{Cov}$, i.e. $\phi_{\blacktriangleleft_{(S,\vdash)}}(\lambda_{(S_\mathsf{T},\mathsf{T}^S)}(T))=T$, for all $T\in\mathsf{T}^S$.  Noting that
\[\lambda_{(S_\mathsf{T},\mathsf{T}^S)}(T)=\{\mathsf{T}_p:T\in\mathsf{T}_p\}=\{\mathsf{T}_p:p\in T\}=T_\mathsf{T},\]
we do indeed see again that
\begin{align*}
p\in\phi_{\blacktriangleleft_{(S,\vdash)}}(\lambda_{(S_\mathsf{T},\mathsf{T}^S)}(T))\quad&\Leftrightarrow\quad\exists F\in\mathsf{F}T\ (F_\mathsf{T}\blacktriangleleft_{(S,\vdash)}p)\\
&\Leftrightarrow\quad \exists F\in\mathsf{F}T\ (F\vdash p)\\
&\Leftrightarrow\quad p\in T.\qedhere
\end{align*}
\end{proof}

Restricting $\mathsf{Sp}$ to $\mathbf{ACov}$ immediately yields a duality between $\mathbf{SSub}$ and $\mathbf{ACov}$, giving us the functorialisation of \autoref{ACovSSub} that we set out to achieve.

\subsection{Karoubi Morphisms}

Let us finish with a brief note about Vickers original category.  As mentioned in \autoref{VickersKaroubi}, Vickers considered relations merely satisfying \eqref{Karoubi}.  Let us call relations $\vdash$ on $\mathsf{F}S$ and $\vDash$ on $\mathsf{F}R$ \emph{Karoubi isomorphic} if there are Karoubi morphisms $\sqsubset\ \subseteq\mathsf{F}R\times\mathsf{F}S$ and $\sqin\ \subseteq\mathsf{F}S\times\mathsf{F}R$ with
\[\sqin\bullet\sqsubset\ =\ \vdash\qquad\text{and}\qquad\sqsubset\bullet\sqin\ =\ \vDash.\]
While cover relations are more restrictive than Vickers' monotone cut-idempotents, the following result shows that they have the same Karoubi isomorphism classes (although to construct $\vDash$ below we need to move beyond finite subsets and consider infinite quasi-ideals).  For some similar results in the context of proximity lattices and other related categories, see \cite[Theorems 5.8 and 5.29]{Kawai2020}.

\begin{thm}\label{Karoubiisomorphic}
Any monotone cut-idempotent $\vdash$ on $\mathsf{F}S$ is Karoubi isomorphic to the cover relation $\vDash$ on $\mathsf{FQ}S$ given by
\[\mathscr{F}\vDash\mathscr{G}\qquad\Leftrightarrow\qquad\bigwedge\mathscr{F}\ll\bigvee\mathscr{G}.\]
\end{thm}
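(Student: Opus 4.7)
By \autoref{MCI}, $\mathsf{QF}S$ is a stably continuous frame whose way-below relation $\ll$ is characterised via \eqref{WayBelow}. Applying \autoref{PredomainsXpl} to the distributive $\vee$-semilattice $(\mathsf{QF}S,\leq)$ equipped with the idempotent relation $\ll$ shows that $\vDash$ is indeed a cover relation on $\mathsf{FQF}S$, so the statement makes sense. The task is then to exhibit Karoubi morphisms between $\vdash$ and $\vDash$ and to verify they cut-compose to the appropriate identities.

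I propose to define the candidate morphisms
\[F\sqin\mathscr{G}\iff F\in\textstyle\bigvee\mathscr{G},\qquad\mathscr{F}\sqsubset G\iff\textstyle\bigwedge\mathscr{F}\ll G^\dashv,\]
where $G^\dashv=\{F'\in\mathsf{F}S:F'\vdash G\}$ is a quasi-ideal by cut-idempotence of $\vdash$. The Karoubi identities $\sqin\,=\,\vdash\bullet\sqin\,=\,\sqin\bullet\vDash$ (and dually for $\sqsubset$) follow by combining the fact that $\bigvee\mathscr{G}$ is a quasi-ideal (so it absorbs $\vdash$-premises on the left by standard upperness plus $\bowtie$-symmetry) with continuity of $\mathsf{QF}S$ (every $F\in\bigvee\mathscr{G}$ lies in some $\mathcal{Q}\ll\bigvee\mathscr{G}$, which supplies the interpolating witness needed for $\sqin\bullet\vDash$).

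For $\sqin\bullet\sqsubset\,=\,\vdash$: the $(\Leftarrow)$ direction follows from continuity of $\mathsf{QF}S$, which supplies $\mathcal{Q}\ll G^\dashv$ with $F\in\mathcal{Q}$ whenever $F\vdash G$, so $\mathscr{H}=\mathscr{I}=\{\{\mathcal{Q}\}\}$ witnesses $F\,(\sqin\bullet\sqsubset)\,G$. The forward direction $(\Rightarrow)$ uses frame distributivity
\[\bigwedge_k\bigvee\mathscr{H}_k\;=\;\bigvee_{J\in\mathscr{H}_\between}\bigwedge J\]
combined with $\mathscr{H}\bowtie\mathscr{I}$ (so each selection $J$ dominates some $\mathscr{I}_l$) and meet-stability of $\ll$ to conclude $F\in G^\dashv$, i.e.\ $F\vdash G$. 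The identity $\sqsubset\bullet\sqin\,=\,\vDash$ is established by the symmetric argument: meet-stability of $\ll$ passes $\bigwedge\mathscr{F}\ll H^\dashv$ (for each $H\in\mathcal{H}$) to $\bigwedge\mathscr{F}\ll\bigcap_HH^\dashv$, and then upperness of $\vdash$ together with $\mathcal{H}\bowtie\mathcal{I}$ shows $\bigcap_HH^\dashv\subseteq\bigvee\mathscr{G}$, since any $F'$ entailing every $H\in\mathcal{H}$ also entails every selection of $\mathcal{I}$ (as each such selection contains some $H$ by $\bowtie$-symmetry and upperness), placing $F'$ in the quasi-ideal $\bigvee\mathscr{G}$ because $\mathcal{I}\in\mathsf{F}(\bigvee\mathscr{G})$.

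The main obstacle is the careful translation between cut-compositions at the higher level $\mathsf{FFQF}S$ and frame-theoretic calculations in $\mathsf{QF}S$; the pivotal tools throughout are frame distributivity and stability of $\ll$ under finite meets, both inherited from stable continuity of $\mathsf{QF}S$. A further subtlety is that $\vdash$ is only assumed cut-idempotent (not divisible), so the clean formula \eqref{vdashll} is unavailable and one must work with the principal quasi-ideal $G^\dashv$ directly, rather than the join $\bigvee_{g\in G}g^\dashv$ of singleton principal quasi-ideals.
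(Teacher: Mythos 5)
Your proposal is correct and follows essentially the same route as the paper: the two Karoubi morphisms $F\sqin\mathscr{G}\Leftrightarrow F\in\bigvee\mathscr{G}$ and $\mathscr{F}\sqsubset G\Leftrightarrow\bigwedge\mathscr{F}\ll G^\dashv$ are exactly those used in the paper, and the verifications are carried out with the same tools (the quasi-ideal property of $\bigvee\mathscr{G}$ and $G^\dashv$, the characterisation \eqref{WayBelow} of $\ll$ with its interpolation, frame distributivity together with the $\bowtie$ containment, and stability of $\ll$). The only cosmetic difference is that you justify $\vDash$ being a cover relation via \autoref{PredomainsXpl} where the paper appeals to \autoref{SubsetCex}; both work.
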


\begin{proof}
First note that $\vDash$ above is indeed a cover relation just like in \autoref{SubsetCex}.  Now define $\sqsubset\ \subseteq\mathsf{FQ}S\times\mathsf{F}S$ and $\sqin\ \subseteq\mathsf{F}S\times\mathsf{FQ}S$ by
\begin{align*}
\mathscr{F}\sqsubset G\qquad&\Leftrightarrow\qquad\bigwedge\mathscr{F}\ll G^\dashv\\
F\sqin\mathscr{G}\qquad&\Leftrightarrow\qquad F\in\bigvee\mathscr{G}.
\end{align*}

Say $\mathscr{F}\sqsubset\bullet\vdash G$, so we have $\mathcal{H}\in\mathsf{FF}S$ such that $\mathscr{F}\sqsubset_\between\mathcal{H}\vdash G$.  For all $I\in\mathcal{H}_\between$, this means $\bigwedge\mathscr{F}\ll I^\dashv$ and, in particular, $\bigwedge\mathscr{F}\subseteq I^\dashv$.  In other words, $\bigwedge\mathscr{F}\vdash\mathcal{H}_\between$ and hence $\bigwedge\mathscr{F}\ll G^\dashv$, by \eqref{WayBelow}, i.e. $\mathscr{F}\sqsubset G$.  Conversely, say $\mathscr{F}\sqsubset G$, i.e. $\bigwedge\mathscr{F}\ll G^\dashv$.  By \eqref{WayBelow}, we have $\mathcal{H}\in\mathsf{FF}S$ with $\bigwedge\mathscr{F}\vdash_\between\mathcal{H}\vdash G$.  As $\vdash\ \subseteq\ \vdash\bullet\vdash$ and hence ${}_\forall\hspace{-3pt}\vdash\ \subseteq{}_\forall\hspace{-3pt}\vdash\bullet\vdash$, by \eqref{foralldiamond}, we have $\mathcal{I}\in\mathsf{FF}S$ with $\mathcal{H}\vdash_\between\mathcal{I}\vdash G$ so $\bigwedge\mathscr{F}\ll J^\dashv$, for all $J\in\mathcal{I}_\between$, by \eqref{WayBelow}.  This means $\mathscr{F}\sqsubset\mathcal{I}_\between$ so $\mathscr{F}\sqsubset\bullet\vdash G$.  This shows that $\sqsubset\ =\ \sqsubset\bullet\vdash$.

Now if $\mathscr{F}\vDash\bullet\sqsubset G$ then we have $\mathfrak{F},\mathfrak{G}\in\mathsf{FFQ}S$ with $\mathscr{F}\vDash\mathfrak{F}\bowtie\mathfrak{G}\sqsubset G$ and hence
\[\bigwedge\mathscr{F}\subseteq\bigwedge_{\mathscr{H}\in\mathfrak{F}}\bigvee\mathscr{H}\subseteq\bigvee_{\mathscr{G}\in\mathfrak{G}}\bigwedge\mathscr{G}\ll G^\dashv.\]
Thus $\mathscr{F}\sqsubset G$.  Conversely, if $\mathscr{F}\sqsubset G$ then, taking $\mathcal{H},\mathcal{I}\in\mathsf{FF}S$ as above, we see that $\mathscr{F}\vDash\{\mathcal{I}\hspace{-3pt}\downarrow\}\bowtie\{\mathcal{I}\hspace{-3pt}\downarrow\}\sqsubset G$.  Thus $\sqsubset\ =\ \vDash\bullet\sqsubset$ is a Karoubi morphism from $S$ to $\mathsf{Q}S$.

Similarly, if $F\sqin\bullet\vDash\mathscr{G}$, then we have $\mathfrak{F},\mathfrak{G}\in\mathsf{FFQ}S$ with $F\sqin\mathfrak{F}\bowtie\mathfrak{G}\vDash\mathscr{G}$ so
\[F\in\bigcap_{\mathscr{F}\in\mathfrak{F}}\bigvee\mathscr{F}=\bigwedge_{\mathscr{F}\in\mathfrak{F}}\bigvee\mathscr{F}\subseteq\bigvee_{\mathscr{H}\in\mathfrak{G}}\bigwedge\mathscr{H}\ll\bigvee\mathscr{G}\]
and hence $F\sqin\mathscr{G}$.  Conversely, if $F\sqin\mathscr{G}$ then, as $\bigvee\mathscr{G}$ is a quasi-ideal, we have $\mathcal{H}\in\mathsf{F}(\bigvee\mathscr{G})$ with $F\vdash\mathcal{H}_\between$ so $F\sqin\{\mathcal{H}\hspace{-3pt}\downarrow\}\bowtie\{\mathcal{H}\hspace{-3pt}\downarrow\}\vDash\mathscr{G}$, showing that $\sqin\bullet\vDash\ =\ \sqin$.

On the other hand, if $F\vdash\bullet\sqin\mathscr{G}$, then we have $\mathcal{H}\in\mathsf{F}(\bigvee\mathscr{G})$ with $F\vdash\mathcal{H}_\between$ and hence $F\in\bigvee\mathscr{G}$, as $\bigvee\mathscr{G}$ is a quasi-ideal, so $F\sqin\mathscr{G}$.  Conversely, if $F\sqin\mathscr{G}$ then, taking $\mathcal{H}$ as above, we see that $F\vdash_\between\mathcal{H}\sqin\mathscr{G}$.  This shows that $\vdash\bullet\sqin\ =\ \sqin$ so $\sqin$ is a Karoubi morphism from $\mathsf{Q}S$ to $S$.

Now say $F\sqin\bullet\sqsubset G$, so we have $\mathfrak{F},\mathfrak{G}\in\mathsf{FFQ}S$ with $F\sqin\mathfrak{F}\bowtie\mathfrak{G}\sqsubset G$ and hence
\[F\in\bigcap_{\mathscr{F}\in\mathfrak{F}}\bigvee\mathscr{F}=\bigwedge_{\mathscr{F}\in\mathfrak{F}}\bigvee\mathscr{F}\subseteq\bigvee_{\mathscr{G}\in\mathfrak{G}}\bigwedge\mathscr{G}\ll G^\dashv.\]
Thus $F\in G^\dashv$, i.e. $F\vdash G$.  Conversely, if $F\vdash G$ then, as $\vdash\ \subseteq\ \vdash\bullet\vdash$, we have $\mathcal{H}\in\mathsf{FF}S$ with $F\vdash_\between\mathcal{H}\vdash G$ so $F\in\mathcal{H}\hspace{-3pt}\downarrow\ \ll G^\dashv$ and hence $F\sqin\{\mathcal{H}\hspace{-3pt}\downarrow\}\bowtie\{\mathcal{H}\hspace{-3pt}\downarrow\}\sqsubset G$.  This shows that $\vdash\ =\ \sqin\bullet\sqsubset$.

Finally, say $\mathscr{F}\sqsubset\bullet\sqin\mathscr{G}$, so we have $\mathcal{H}\in\mathsf{FF}S$ with $\mathcal{H}\sqin\mathscr{G}$ and $\mathscr{F}\sqsubset\mathcal{H}_\between$, i.e. $\bigwedge\mathscr{F}\ll I$, for all $I\in\mathcal{H}_\between$.  Thus $\mathcal{H}\subseteq\bigvee\mathscr{G}$ and $\bigwedge\mathscr{F}\vdash\mathcal{H}_\between$ so $\bigwedge\mathscr{F}\ll\bigvee\mathscr{G}$, i.e. $\mathscr{F}\vDash\mathscr{G}$.  Conversely, if $\mathscr{F}\vDash\mathscr{G}$ then we have $\mathcal{H}\in\mathsf{F}(\bigvee\mathscr{G})$ with $\bigwedge\mathscr{F}\vdash\mathcal{H}_\between$.  As $\bigvee\mathscr{G}$ is a quasi-ideal, we also have $\mathcal{I}\in\mathsf{F}(\bigvee\mathscr{G})$ with $\mathcal{H}\vdash\mathcal{I}_\between$ and hence $\mathscr{F}\sqsubset_\between\mathcal{I}\sqin\mathscr{G}$, showing that $\vDash\ =\ \sqsubset\bullet\sqin$.  Thus $S$ and $\mathsf{Q}S$ are Karoubi isomorphic.
\end{proof}

Another somewhat subtle difference between Vickers' and our work is that we interpret cover relations in general stably locally compact spaces, whereas Vickers only works with stably compact spaces (or, for the most part, their corresponding stably continuous frames).  Thus it can happen that a single cover relation can represent two different spaces in these different contexts.

For example, consider a cover relation like $\vdash$ defined on $\mathsf{F}\mathbb{N}$ by
\[F\vdash G\qquad\Leftrightarrow\qquad\mathrm{min}(F)\leq\mathrm{max}(G)\]
(where $\mathbb{N}=\{1,2,\ldots\}$ and we take $\min(\emptyset)=\infty$ and $\max(\emptyset)=0$, so that $\emptyset\not\vdash F$ and $F\not\vdash\emptyset$, for all $F\in\mathsf{F}\mathbb{N}$).  Note this corresponds to the canonical cover relation on the sets $N_k=\{1,\ldots,k\}$, for $k\in\mathbb{N}$, i.e.
\[F\vdash G\qquad\Leftrightarrow\qquad\bigcap_{k\in F}N_k\subseteq\bigcup_{k\in G}N_k.\]
On the one hand, $(N_k)_{k\in\mathbb{N}}$ is subbasis of $\mathbb{N}$ considered as a locally compact space in its lower topology.  On the other hand, $(N_k)_{k\in\mathbb{N}}$ is a more general kind of generating family (one which does not cover the whole space) of $\mathbb{N}\cup\{\infty\}$ considered as a compact space again in its lower topology.  In our setup, this relation $\vdash$ would correspond to the former situation, while in Vickers' setup it would correspond to the latter.  Indeed, if we wanted to be more faithful to Vickers' work then this could be achieved by not requiring subbases to cover the space in question and also allowing the empty set to be a part of our tight spectrum.

\bibliography{Maths}{}
\bibliographystyle{alphaurl}

\end{document}